\documentclass[12pt,reqno]{amsart}

\usepackage{fullpage}
\usepackage{microtype}
\usepackage{amsmath,amssymb,amsthm,mathrsfs}
\usepackage{mathtools}
\usepackage{tikz}
\usetikzlibrary{arrows.meta,shapes.geometric,fit,backgrounds}
\usepackage{float}

\usepackage{hyperref}
\hypersetup{colorlinks=true, linkcolor=blue, citecolor=blue, urlcolor=blue}

\theoremstyle{plain}
\newtheorem{theorem}{Theorem}[section]
\newtheorem{lemma}[theorem]{Lemma}
\newtheorem{proposition}[theorem]{Proposition}
\newtheorem{corollary}[theorem]{Corollary}

\newtheorem{conj}[theorem]{Conjecture}

\theoremstyle{definition}
\newtheorem{definition}[theorem]{Definition}
\newtheorem{remark}[theorem]{Remark}

\newcommand{\F}{\mathbb{F}}

\newcommand{\PG}{\mathrm{PG}}
\newcommand{\Z}{\mathbb{Z}}
\newcommand{\R}{\mathbb{R}}

\newcommand{\IM}{\mathrm{IM}}
\newcommand{\PL}{\mathrm{PL}}
\newcommand{\IMPH}{\mathrm{IM}_{\mathrm{PH}}}
\newcommand{\Nik}{\mathrm{Nikodym}}
\newcommand{\II}{\mathcal{I}}

\newcommand{\abs}[1]{\left|#1\right|}

\newcommand{\infnorm}[1]{\lVert #1 \rVert_\infty}

\newcommand{\hide}[1]{}

\DeclareMathOperator{\Norm}{N}

\begin{document}
\title{Large point-line matchings and small Nikodym sets}

\author{Zach Hunter}
\thanks{Department of Mathematics, ETH, Zurich, Switzerland. Email: {\tt zachtalkmath@gmail.com}.}

\author{Cosmin Pohoata}
\thanks{Department of Mathematics, Emory University, Atlanta, GA. Email: {\tt jbaverstraete@gmail.com}.}

\author{Jacques Verstraete}
\thanks{Department of Mathematics, UCSD, San Diego, CA. Email: {\tt cosmin.pohoata@emory.edu}.}
\email{}

\author{Shengtong Zhang}
\thanks{Department of Mathematics, Stanford University, Stanford, CA. Email: {\tt stzh1555@stanford.edu}.}

\date{\today}

\begin{abstract}
\vspace{+15mm}
For any integer $d \geq 2$ and prime power $q$, we construct unexpectedly large induced matchings in the point-line incidence graph of $\mathbb{F}_{q}^{d}$ by leveraging a new connection with the Furstenberg-S\'ark\"ozy problem from arithmetic combinatorics. In particular, we significantly improve the previously well-known baselines when $q$ is prime, showing that $\mathbb{F}_{q}^{2}$ contains matchings of size $q^{1.233}$ and $\mathbb{F}_{q}^{d}$ contains matchings of size $q^{d-o_{d}(1)}$. 

These results and their proofs have several applications. First, we also obtain new constructions for finite field Nikodym sets in dimension $d \geq 2$, improving recent results of Tao by polynomial factors. For example, when $q$ is prime, we show the existence of Nikodym sets in $\F_q^d$ of size $q^d - q^{d - o_d(1)}$. Second, we construct a new minimal blocking set in $\mathrm{PG}(2,q)$, solving a longstanding problem in finite geometry. Third, we obtain new constructions for the minimal distance problem (in $\mathbb{R}^{2}$ and also in higher dimensions), improving a recent result of Logunov-Zakharov.

We also obtain analogous results for general finite fields with large characteristics. In particular, in one of our constructions we introduce a new special set of points inside the norm hypersurface in $\mathbb{F}_{q}^{d}$, which directly generalizes the classical Hermitian unital and which may be of independent interest for applications. 
\end{abstract}

\maketitle

\newpage

\tableofcontents

\newpage


\section{Introduction} \label{sec:intro}

\subsection{Induced matchings and Nikodym sets}

For an integer $d \geq 1$ and prime power $q$, let $\mathbb{F}_{q}^{d}$ denote the $d$-dimensional vector space over the finite field $\mathbb{F}_{q}$ of order $q$. Let $\II_q^{(d)}$ be the bipartite graph whose left vertex set is the set of points in $\F_q^d$, whose right vertex set is the set of affine lines in $\F_q^d$, and where a point is adjacent to a line if and only if it lies on that line. The object that we will be studying in this paper is the following parameter. 

\begin{definition}[Induced matchings]\label{def:imd}
{\it{Let $\IM(d,q)$ be the maximum size of an induced matching in $\II_q^{(d)}$.}}
\end{definition}
Equivalently, $\IM(d,q)$ denotes the largest $m$ for which there exist pairs
\[
  (p_1,\ell_1),\dots,(p_m,\ell_m)
\]
with $p_i\in\ell_j$ if and only if $i=j$. We will be interested in the regime where the dimension $d$ is fixed and $q$ is large. 

The problem of estimating $\IM(2, q)$ was recently highlighted by Cohen--Pohoata--Zakharov \cite{CPZ}, in a surprising connection with the Heilbronn triangle problem. As noted in \cite[\S1.5]{CPZ}, a Szemer\'edi--Trotter type incidence theorem of Vinh
\cite{Vinh2011ST} implies the general upper bound
\begin{equation}\label{eq:vinh}
  \IM(2,q) \le q^{3/2}+q.
\end{equation}
When $q = p^2$ is a square, \eqref{eq:vinh} is sharp up to constants, because of the so-called Hermitian unitals defined on the affine plane by
\begin{equation}\label{eq:hermitian-curve}
  P:=\{(a,b)\in\F_q^2:\ a^{p+1}+b^{p+1}=1\}.
\end{equation}
It is well-known that $|P|=p^{3}-p=q^{3/2}-q^{1/2}$. Through each $(a,b)\in P$
there is a unique tangent line $\ell_{a,b}$ meeting $P$ only at $(a,b)$; in affine coordinates one may take
\begin{equation}\label{eq:hermitian-tangent}
  \ell_{a,b}=\{(a,b)+t(b^{p},-a^{p}): t\in\F_q\}
  =\{(x,y)\in\F_q^2:\ a^{p}x+b^{p}y=1\}.
\end{equation}
Thus the pairs $\{((a,b),\ell_{a,b}): (a,b) \in P\}$ form an induced matching in $\II_q$ of size $|P|$, giving
$\IM(2,q)\ge q^{3/2}-q^{1/2}$. Hermitian unitals have recently been featured in several constructions in extremal combinatorics, most notably in the recent paper of Mattheus--Verstra\"ete \cite{MV} on the asymptotics of the off-diagonal Ramsey $R(4,t)$. 

That being said, the unital construction is not applicable when $q$ is not a perfect square, and the best known lower bound for $\IM(2,q)$ prior to this work has been of the form $\IM(2,q) \gg q \log q$\footnote{Throughout the paper, we will write $f\gg g$ or $f = \Omega(g)$ or if there exists a positive constant $C$ such that $|f(x)|\geq C g(x)$ for all $x$. We will also write $f \ll g$ or $f = O(g)$ if $g \gg f$, and $f\asymp g$ if $f \gg g$ and $f \ll g$ both hold. We say $f= o(g)$ if for any $\varepsilon>0$, $|f(x)|\le \varepsilon g(x)$ for all sufficiently large $x$.}. This is achieved by either choosing the point set randomly or lifting a clique from the Paley graph (see Section~\ref{sec:prime-2d} for the important backstory).

Our foundational result improves upon this ``baseline bound" by a polynomial factor for all prime $q$. 

\begin{theorem}\label{thm:d=2}
For all primes $q$, we have
\[
  \IM(2,q) \gg q^{1.2334}.
\]
\end{theorem}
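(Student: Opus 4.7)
The plan is to construct the induced matching as a family of tangent lines to a union of vertically translated parabolas in $\F_q^2$, where the vertical translates form a set of integers with no nonzero perfect-square difference in $\Z$ (a Sárközy set in the sense of Ruzsa). The key observation is that the tangent $\ell_{x_0, y_0} : z = 2 x_0 w - x_0^2 + y_0$ to the parabola $z = w^2 + y_0$ at $(x_0, x_0^2 + y_0)$ meets a parallel parabola $z = w^2 + y'$ precisely at the solutions of $(w - x_0)^2 = y_0 - y'$ in $\F_q$; so the existence of extra incidences is entirely controlled by whether $y_0 - y'$ is a quadratic residue and by the magnitude of its square root modulo $q$.

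Concretely, I would first invoke Ruzsa's construction to produce a set $B \subseteq \{0, 1, \ldots, N - 1\}$ of size $|B| \gg N^{\alpha}$ (for $\alpha \approx 0.7334$) such that $B - B$ contains no nonzero perfect square in $\Z$. After embedding $B$ in $\F_q$ via the natural injection, I would set $A = \{0, 1, \ldots, M - 1\} \subseteq \F_q$ with $M = \lfloor \sqrt{q - N} \rfloor$, and define
\[
  P \;=\; \{(x, x^2 + y) : x \in A,\ y \in B\} \;\subseteq\; \F_q^2,
\]
pairing each point $(x_0, x_0^2 + y_0) \in P$ with the tangent $\ell_{x_0, y_0}$ above.

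The main verification is that $\ell_{x_0, y_0} \cap P = \{(x_0, x_0^2 + y_0)\}$ for every $(x_0, y_0) \in A \times B$. Any such intersection with $(w, w^2 + y') \in P$ satisfies $(w - x_0)^2 = y_0 - y'$ in $\F_q$; the case $y' = y_0$ forces $w = x_0$, and when $y_0 - y'$ is a nonresidue mod $q$ there are no solutions. The remaining case has $y_0 - y'$ a residue with smaller square root $r \in [1, (q-1)/2]$, and I would show $r \geq M$, so that $x_0 \pm r \bmod q$ necessarily lie outside $\{0, \ldots, M - 1\}$. For $y_0 > y'$ (as integers in $[0, N-1]$), Ruzsa's property forbids $r^2 = y_0 - y'$ in $\Z$, forcing $r^2 \geq q$ and so $r \geq \sqrt q \geq M$. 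For $y_0 < y'$, the congruence becomes $r^2 \equiv q - (y' - y_0) \pmod q$, and even if $q - (y' - y_0) \in [q - N + 1, q - 1]$ happens to be a perfect square in $\Z$, the root still satisfies $r \geq \sqrt{q - N} \geq M$ automatically.

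Finally, $|P| = M \cdot |B| \gg \sqrt{q - N} \cdot N^{\alpha}$ is maximized (in $N$) at $N = \frac{2\alpha}{1 + 2\alpha}\, q$, yielding $|P| \gg q^{1/2 + \alpha} \geq q^{1.2334}$. I expect the main obstacle to be the wrap-around analysis in the preceding step: Ruzsa's property handles the positive-difference case directly, but the negative-integer-difference case $y_0 < y'$ requires a separate argument about integer square roots of values near $q$, and this is exactly what forces the less generous choice $M \leq \sqrt{q - N}$ (rather than $M \leq \sqrt q$), thereby pinning down the optimal trade-off between $|A|$ and $|B|$ and fixing the exponent at $\tfrac{1}{2} + \alpha$.
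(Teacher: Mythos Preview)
Your proposal is correct and takes essentially the same approach as the paper: both constructions place points on a union of translated parabolas indexed by a Ruzsa square-difference-free set and pair each point with its tangent line, exploiting the identity that motion along a tangent shifts the parabola index by a perfect square. The only substantive difference is in the verification---the paper bounds $|x + y(y'-y) - x'| < q$ to lift the entire congruence to an integer equality in one step, thereby avoiding your case split on the sign of $y_0 - y'$, but both routes yield $\IM(2,q) \gg q^{1/2 + 0.7334}$.
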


For $d\ge 3$, no analogue of \eqref{eq:vinh} is known, and it is in fact an open problem to show $\IM(d,q) = o(q^d)$. This question is typically stated in a different language, in terms of so-called {\it{Nikodym sets}}. We next formally define Nikodym sets and state their connections.

Fix $d\ge 2$ and a prime power $q$.
For $x\in\F_q^d$ and $v\in\F_q^d\setminus\{0\}$,
write
$
  \ell(x,v):=\{x+t v:\ t\in\F_q\}
$
for the affine line through $x$ with direction $v$, and
$
  \ell(x,v)^{\!*}:=\ell(x,v)\setminus\{x\}
$
for the corresponding punctured line.

\begin{definition}[Nikodym set]
A set $N\subset \F_q^d$ is a \emph{Nikodym set} if for every $x\in\F_q^d$ there exists
$v\in\F_q^d\setminus\{0\}$ such that $\ell(x,v)^{\!*}\subset N$. Let $\Nik(d, q)$ be the size of the smallest Nikodym set in $\F_q^d$.
\end{definition}

Nikodym sets are a finite-field analogue of the classical Euclidean Nikodym problem, and are closely related
(via projective transformations) to Kakeya sets. See, for instance, Dvir's beautiful survey \cite{dvir2012incidence} and the references therein.

From this perspective, point--line matchings can be regarded as a weaker version of Nikodym sets, where only points outside $N$ are required to have a punctured line contained in $N$. 

\begin{definition}[Weak Nikodym set]\label{def:weak-nik}
A set $N\subset \F_q^d$ is a \emph{weak Nikodym set} if for every $x\in \F_q^d\setminus N$
there exists $v\in\F_q^d\setminus\{0\}$ such that $\ell(x,v)^{\!*}\subset N$. 
\end{definition}
To keep a clear picture, we summarize the relationship between Nikodym sets and weak Nikodym sets in the following simple proposition. See Section~\ref{sec:Nikodym-I} for the complete proof.
\begin{proposition}
\label{prop:nikodym-basic}
1) Any Nikodym set is a weak Nikodym set, but not vice versa.

2) A set in $\F_q^d$ is a weak Nikodym set if and only if its complement is contained in an induced matching in $\II_q^{(d)}$.

3) If a set $N \subset \F_q^d$ is a weak Nikodym set, then the set $N \times \F_q \subset \F_q^{d + 1}$ is a Nikodym set.
\end{proposition}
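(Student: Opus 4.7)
\emph{Proof plan.}
The plan is to dispatch the three parts in turn; each is essentially definitional, with only Part~1 requiring a small witness construction.

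For Part~1, the forward direction (Nikodym $\Rightarrow$ weak Nikodym) is by inclusion: the Nikodym condition holds at every $x \in \F_q^d$, and in particular at every $x \notin N$. For the strict containment, I would exhibit an explicit weak Nikodym set that is not Nikodym. A clean choice in $\F_q^2$: pick one nonzero representative $v_i$ on each of the $q+1$ distinct lines through the origin, set $S = \{v_1, \ldots, v_{q+1}\}$, and let $N = \F_q^2 \setminus S$. Each $v_i \in S$ escapes along the line through the origin in its own direction (the punctured line there consists of $0$ together with scalar multiples of $v_i$, which avoid every other $v_j$ since those lie in different directions through $0$), so $N$ is weak Nikodym; on the other hand, $0 \in N$ lies on every line $\ell(0, v_i)$, each of which contains $v_i \in S$, so no punctured line through $0$ lies in $N$, and $N$ is not Nikodym.

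For Part~2, I would just write down the natural bijection. In one direction, given a weak Nikodym set $N$ with complement $S$, for each $x \in S$ choose a witness line $\ell_x$ from the definition; the pairs $\{(x, \ell_x) : x \in S\}$ form an induced matching, since any $y \in \ell_x \cap S$ with $y \neq x$ would lie in $\ell_x^* \subset N$, contradicting $y \in S$. In the other direction, if $S$ is contained in the point set of an induced matching $\{(p_i, \ell_i)\}$, then for $x = p_i \in S$ every other point of $\ell_i$ differs from all $p_j$ by the induced condition, hence lies outside $S$, so $\ell_i^* \subset N$ and $N$ is weak Nikodym.

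For Part~3, I would use the obvious cylinder lift: set $M = N \times \F_q \subset \F_q^{d+1}$ and verify the Nikodym property at an arbitrary $(x,t)$ by cases. If $x \in N$, the vertical punctured line through $(x,t)$ in direction $(0,1)$ is contained in $\{x\} \times \F_q \subset M$. If $x \notin N$, I would lift the weak Nikodym witness direction $v$ for $x$ to $(v, 0)$; the resulting punctured line through $(x,t)$ is $\ell(x, v)^* \times \{t\} \subset N \times \F_q = M$.

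No step presents a substantive obstacle; the entire argument is a formal unpacking of definitions. The only genuine design choice is the counterexample in Part~1, where the blocking-style configuration through the origin is the natural witness.
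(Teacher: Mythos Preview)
Your proposal is correct and, for Parts~2 and~3, essentially identical to the paper's argument. The only difference is the counterexample in Part~1: the paper removes the two coordinate axes and takes $N=(\F_q^\times)^2$, observing that every line through an interior point meets an axis, whereas you remove one nonzero point from each of the $q+1$ lines through the origin and use $0$ as the obstructed point; both witnesses are equally valid, and yours is in fact slightly smaller.
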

Part 2) and 3) shows the implications $$\IM(d,q)=o(q^d) \Longrightarrow \Nik(d, q) =q^d-o(q^d) \Longrightarrow  \IM(d - 1,q) = o(q^{d - 1}).$$
The problem of showing $\Nik(d, q) =q^d-o(q^d)$ is a well-known open problem in the area, see for example \cite{LundSarafWolf2018}.

We will next establish new lower bounds for the quantity $\IM(d,q)$ when $d \geq 3$, which due to Proposition \ref{prop:nikodym-basic} will also come with new constructions for Nikodym sets in $\mathbb{F}_{q}^{d}$. 

\subsection{Large induced matchings in higher dimensions}

In higher dimensions, the ``baseline bound" is $\IM(d,q)\gg q^{d - 1}\log q$. We will discuss the history of this bound in the next subsection. Before our work, the only known improvement upon the baseline bound when $q$ is a prime is a constant factor improvement due to a recent result of Tao \cite{Tao2025} on Nikodym sets. 

Before we talk more about Nikodym sets, let us start with the simple observation that by Proposition~\ref{prop:nikodym-basic} (part 2 and 3), we can already derive the inequality
\begin{equation} \label{eq:IMlift}
    \IM(d, q) \geq q \IM(d - 1, q).
\end{equation}
Hence, for example, Theorem~\ref{thm:d=2} already gives us a polynomial improvement of $\IM(d, q) \geq q^{d - 0.7776}$ over the trivial bound when $q$ is a prime.

Our next results significantly improve upon such bounds in high dimensions. In particular, when $q$ is a prime, we show that as $d \to \infty$, the exponent can be improved to $d - o_d(1)$.
\begin{theorem}
    \label{thm:prime-high-dim}
    There exist constants $\varepsilon_d > 0$ with $\varepsilon_d \ll (\log d)^{-1}$, such that for any dimension $d$ and prime $q$, we have
    $$\IM(d, q) \gg_d q^{d - \varepsilon_d}.$$
\end{theorem}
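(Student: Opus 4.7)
The plan is to deduce Theorem~\ref{thm:prime-high-dim} from Theorem~\ref{thm:d=2} via an iterative dimension-doubling procedure. Concretely, I would aim to prove a recursion of the form
\[
  \epsilon_{2d} \le \epsilon_d - c\,\epsilon_d^{2}
\]
for some absolute constant $c>0$. Equivalently, $1/\epsilon_{2d} \ge 1/\epsilon_d + c - O(\epsilon_d)$; iterating this $k$ times starting from the base case $\epsilon_2 \le 0.7666$ supplied by Theorem~\ref{thm:d=2} yields $1/\epsilon_{2^{k+1}} \gtrsim k$, and using monotonicity of $\epsilon_d$ in $d$ (from the lifting inequality~\eqref{eq:IMlift}) to interpolate between powers of $2$ then gives $\epsilon_d \ll (\log d)^{-1}$ as required.

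For the doubling step, given a matching $M = \{(p_i,\ell_i)\}_{i=1}^{n}$ in $\II_q^{(d)}$ with $n = \IM(d,q) \ge q^{d-\epsilon}$, the construction in $\F_q^{2d} = \F_q^d \oplus \F_q^d$ combines two contributions:
(i) the lifted pairs $\bigl\{\bigl((p_i,y),\,\ell_i\times\{y\}\bigr) : i\in[n],\ y\in\F_q^d\bigr\}$, of total size $n\,q^d$, as in Proposition~\ref{prop:nikodym-basic}(3);
and
(ii) supplementary matched pairs within the vertical slices $\{p\}\times\F_q^d$ for $p\in\F_q^d\setminus\{p_i\}_i$, each slice accommodating up to $\IM(d,q)$ further pairs by reusing the $d$-dimensional bound inside.

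The main obstacle is the conflict between (i) and (ii): a vertical matched point $(p,y)$ lies on the lifted line $\ell_i\times\{y\}$ precisely when $p\in\ell_i$, and the line-union $\bigcup_i\ell_i$ may already cover all of $\F_q^d$ once $n>q^{d-1}$. Hence one cannot merely restrict to slices with $p\notin\bigcup_i\ell_i$. I would overcome this by choosing the base matching $M$ with additional structural properties — for instance so that $\bigcup_i\ell_i$ concentrates in a thin algebraic subset or along few parallel classes of lines — allowing a positive-density subcollection of slices to support compatible vertical matchings. The Furstenberg--S\'ark\"ozy machinery that powers Theorem~\ref{thm:d=2} is a natural source of such structured base matchings, and I would expect it to re-enter the high-dimensional construction to make this trade-off quantitative, turning what would naively be only a constant-factor gain into a gain of order $\epsilon^{2}$ in the exponent — which is exactly what drives the recursion above.
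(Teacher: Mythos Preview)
Your proposal has a genuine gap: the recursion $\epsilon_{2d}\le\epsilon_d-c\epsilon_d^{2}$ is asserted but never established, and the mechanism you sketch cannot produce it. Contribution~(i) alone yields only $\epsilon_{2d}\le\epsilon_d$ (this is \eqref{eq:IMlift} iterated $d$ times). For contribution~(ii), you correctly note that a vertical matched point $(p,y)$ lies on the lifted line $\ell_j\times\{y\}$ whenever $p\in\ell_j$; but since this holds for \emph{every} $y\in\F_q^d$, the entire slice $\{p\}\times\F_q^d$ is unusable the moment $p\in\bigcup_j\ell_j$. In the regime $\epsilon_d<1$ the $n\ge q^{d-\epsilon_d}$ lines carry at least $nq\ge q^d$ point incidences, so one should expect $\bigcup_j\ell_j=\F_q^d$ and contribution~(ii) to vanish entirely. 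The fix you propose --- arranging $\bigcup_j\ell_j$ to concentrate on a thin set while keeping $|M|\ge q^{d-\epsilon_d}$ --- is precisely the hard part, and you give no construction achieving it; note that even the structured matching behind Theorem~\ref{thm:d=2} already has $\bigcup_j\ell_j=\F_q^2$. There is also no explanation of why, even if some slices survived, the gain in the exponent would be of order $\epsilon_d^{2}$ rather than a mere constant factor in $\IM$.

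The paper's argument is not a recursion on dimension at all. For each integer $k$ with $2k+1$ prime, it constructs in one shot a matching in $\F_q^{d}$ for a single dimension $d=|I_k|+1=k^{O(1)}$, by lifting a $k$th-power-difference-free subset of $\Z$ (Ruzsa's higher-power construction, of density $N^{1-O(1/\log k)}$) through a polynomial $\Phi_N$ in $|I_k|$ integer variables. The index set $I_k$ is designed, using Waring's problem, so that along certain integer directions one has $\Phi_N(x+hy)=\Phi_N(x)+h^{k}$, the integral analogue of \eqref{eq:Phi-shift}. The resulting loss is $\epsilon_{d(k)}=O(1/\log k)\asymp 1/\log d$, and intermediate dimensions are handled by \eqref{eq:IMlift}. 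The ingredient your proposal is missing is this passage to $k$th powers: the improvement in the exponent comes from increasing $k$ (better Ruzsa density), at a cost of polynomially many extra coordinates to absorb the Waring overhead --- not from recombining copies of the two-dimensional construction.
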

In other words, while Theorem \ref{thm:prime-high-dim} does not refute the possibility that $\IM(d, q) = o(q^d)$ holds, it shows that a dimension--independent power saving is not possible. 

\medskip
\subsection{The story for prime powers $q$}
 The story here technically starts with the work of Sz\H{o}nyi et. al. \cite{Szonyi} who showed that for any $k \geq 2$ and $q=p^{k}$ one can use ovoids to construct a minimal blocking set in $\mathrm{PG}(2,q)$ of size $p^{k + 1} + 1$. Their construction contains only one point at infinity, and removing that point immediately gives a point--line matching of size $p^{k + 1}$, proving $\IM(2, q) \geq q^{1 + 1/k}$. When $k=2$, this recovers the lower bound $\IM(2,q) \geq q^{3/2}$ from earlier. For general $k \geq 3$, the inequality \eqref{eq:IMlift} readily gives
 \begin{equation} \label{basepower}
 \IM(d,q) \geq q^{d - 1 + 1/k}.
 \end{equation}
As a first new observation, we record that Theorem~\ref{thm:prime-high-dim} above directly implies an improvement for prime powers with small exponent compared to the dimension. 
\begin{corollary}
    \label{cor:prime-power}
    For $s \ge 2$ and $d \ge 3$, there exist constants $\varepsilon_{d, s} > 0$ with $\varepsilon_{d, s} \ll s / \log d$, such that for any prime power $q = p^s$ with $p$ a prime greater than $s$, we have
    $$\IM(d, q) \gg_{d, s} q^{d - \varepsilon_{d, s}}.$$
\end{corollary}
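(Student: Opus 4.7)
The plan is to reduce Corollary~\ref{cor:prime-power} to Theorem~\ref{thm:prime-high-dim} by applying the latter with the prime $p$ in dimension $sd$ and then transferring the resulting matching back to $\F_q^d$ via the $\F_p$-subfield structure of $\F_q$. First, I would fix an $\F_p$-basis $\{1, \alpha, \ldots, \alpha^{s-1}\}$ of $\F_q$, which induces an $\F_p$-linear identification $\F_q^d \cong \F_p^{sd}$. Under this identification, each $\F_q$-line in $\F_q^d$ corresponds to a specific $s$-dimensional $\F_p$-affine subspace of $\F_p^{sd}$, while each ordinary $\F_p$-line in $\F_p^{sd}$ is contained in a unique $\F_q$-line obtained by taking the $\F_q$-span of its direction through the same basepoint.

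Applying Theorem~\ref{thm:prime-high-dim} to prime $p$ in dimension $sd$ yields an induced matching $\mathcal{M} = \{(p_i, \ell_i)\}_i$ in $\F_p^{sd}$ of size $\gg_d p^{sd - \varepsilon_{sd}}$, with $\varepsilon_{sd} \ll 1/\log(sd) \ll 1/\log d$. For each $i$, let $L_i$ denote the $\F_q$-line containing $\ell_i$. The candidate family of pairs $\{(p_i, L_i)\}_i$ in $\F_q^d$ may fail to be an induced matching for two reasons: (i) two distinct $\F_p$-lines $\ell_i, \ell_j$ of $\mathcal{M}$ may happen to lie inside the same $\F_q$-line, forcing $L_i = L_j$; and (ii) the $q - p$ extra points of the thickening $L_i \setminus \ell_i$ may accidentally coincide with some other matched point $p_j$. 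The plan is to randomize the choice of $\F_p$-basis of $\F_q$ and apply a deletion argument to remove the pairs involved in either type of collision, keeping most of the matching intact.

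The main obstacle is controlling the loss incurred in the deletion step. A naive expectation bound over a uniformly random $\F_p$-basis produces a spurious-incidence count of order $|\mathcal{M}|^2/q^{d-1}$, which is much larger than $|\mathcal{M}|$ itself when the matching has density close to $1$, so a plain union bound is insufficient. Overcoming this presumably requires exploiting a structural feature of the Theorem~\ref{thm:prime-high-dim} matching (such as a quasi-random distribution of its line directions) to argue that the collisions concentrate on a $q^{-\Omega(1)}$ fraction of the pairs, allowing one to delete them while losing only a factor $q^{-O(s/\log d)}$ in the size. The hypothesis $p > s$ enters precisely here: it ensures that a generic $\F_p$-basis of $\F_q$ is free of Frobenius-type arithmetic degeneracies (e.g., unexpected coincidences among the $\alpha^j$-scaled directions), so the random-basis counting argument can be carried out cleanly. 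Tracking the resulting $O(s)$ loss in the exponent then yields $\IM(d,q) \gg_{d,s} q^{d - \varepsilon_{d,s}}$ with $\varepsilon_{d,s} \ll s/\log d$.
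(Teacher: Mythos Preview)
Your plan contains a genuine gap that you yourself identify but do not resolve. When you thicken each $\F_p$-line $\ell_i$ to its enclosing $\F_q$-line $L_i$, the expected number of type~(ii) collisions under a random basis is of order $|\mathcal{M}|^2/q^{d-1}$, which for $|\mathcal{M}|\asymp q^{d-\varepsilon_{sd}}$ is roughly $q\cdot |\mathcal{M}|$, not $o(|\mathcal{M}|)$. In fact this is unavoidable: the $|\mathcal{M}|$ thickened lines carry $q|\mathcal{M}|$ point-line incidences in total, so a typical matched point $p_j$ lies on roughly $q^{1-\varepsilon_{sd}}\gg 1$ of the $L_i$. Thus essentially every pair would have to be deleted, and no ``quasi-randomness of directions'' can rescue this: the obstruction is a straightforward double count that does not depend on how the directions are distributed. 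Your appeal to unspecified structural features of the Theorem~\ref{thm:prime-high-dim} matching is not a proof, and your proposed role for the hypothesis $p>s$ (preventing Frobenius degeneracies in a random-basis argument) is speculation rather than an actual mechanism.

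The paper's argument goes in the opposite direction and sidesteps the collision problem entirely. Rather than starting from a matching in $\F_p^{sd}$ and trying to descend to $\F_q^d$, it starts from a matching $M\subset \F_p^{d_0}$ with $d_0=\lfloor d/s\rfloor$ and builds a matching in $\F_q^{d_0 s}$ directly via a product-type construction (Proposition~\ref{prop:prime-to-prime-power}). The point set consists of those $a\in\F_q^{sd_0}$ whose $i$-th $\F_p$-coordinate, restricted to the $i$-th block of $d_0$ coordinates, lies in the point set of $M$ for every $i\in\{0,\dots,s-1\}$; the assigned direction is the concatenation of the corresponding $\F_p$-directions and hence has all coordinates in $\F_p$. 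The crucial point is that moving along such a line by $\lambda\in\F_q$ shifts the $i$-th projection by $\pi_i(\lambda)$ along an $\F_p$-line of $M$, so landing back in the point set forces $\pi_i(\lambda)=0$ for all $i$, i.e.\ $\lambda=0$. No deletion is needed, and one reads off $\IM(d_0 s,q)\ge \IM(d_0,p)^s q^{(s-1)d_0}$; combining with Theorem~\ref{thm:prime-high-dim} at dimension $d_0$ and the lift~\eqref{eq:IMlift} gives the claimed $\varepsilon_{d,s}\ll s/\log d$.
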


More substantially, we can also show a large polynomial improvement over \eqref{basepower}, for any power that is less than the dimension.
\begin{theorem}\label{thm:small-power}
For every positive integers $k$ and $d$ with $d \geq k \geq 2$, if $q=q_{0}^k$ is a prime power with prime $q_{0}$, then we have
\[
  \IM(d,q)\ \gg_{d} \ q^{d-1/k}.
\]
\end{theorem}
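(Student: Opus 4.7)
I would first handle the base case $d=k$ by constructing an induced matching in $\F_q^k$ of size $\asymp q^{k-1/k}$, then extend to $d>k$ by iterating the lifting inequality \eqref{eq:IMlift}, which yields $\IM(d,q) \ge q^{d-k}\,\IM(k,q) \gg_d q^{d-1/k}$.

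For the base case, the natural object is a higher-dimensional generalization of the Hermitian unital. Let $\Norm\colon \F_q \to \F_{q_0}$ denote the field norm, $\Norm(x) = x^{(q-1)/(q_0-1)} = \prod_{j=0}^{k-1} x^{q_0^j}$, and consider the affine norm hypersurface
\[
N = \bigl\{(x_1,\ldots,x_k) \in \F_q^k : \Norm(x_1) + \cdots + \Norm(x_k) = 1\bigr\}.
\]
Since $\Norm\colon \F_q^* \to \F_{q_0}^*$ is a surjective group homomorphism with fibers of size $(q-1)/(q_0-1)$, a direct convolution count gives $|N| = (1+o(1))\,q^k/q_0 \asymp q^{k-1/k}$. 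The task is then to associate to each $a$ in a generic subset $N^*\subseteq N$ a direction $v(a) \in \F_q^k$ so that the affine line $\ell_a = a + \F_q\cdot v(a)$ meets $N$ only at $a$. The $k=2$ prototype is the classical Hermitian tangent $v(a)=(a_2^{q_0},-a_1^{q_0})$, which satisfies the clean identity
\[
\sum_i \Norm(a_i + t v_i(a)) = \sum_i \Norm(a_i) + \Norm(t)\,\sum_i \Norm(v_i(a)),
\]
forcing $\ell_a \cap N = \{a\}$ because $\Norm(t) \in \F_q$ vanishes only at $t = 0$.

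For general $k$, this identity reduces to the simultaneous vanishing of $2^k-2$ cross-term coefficients $C_S(a,v) = \sum_i \prod_{j\in S} v_i^{q_0^j} \prod_{j\notin S} a_i^{q_0^j}$ indexed by proper non-empty subsets $S \subsetneq \{0,\ldots,k-1\}$. Frobenius acts on these by cyclic shift $S \mapsto S+1 \pmod k$, collapsing them into roughly $2^k/k$ essential $\Z/k$-orbits. A natural candidate $v(a)$, generalizing the Hermitian tangent, is a Frobenius-cofactor formula such as
\[
v_i(a) = (-1)^{i-1} \det\!\bigl((a_j^{q_0^{\ell}})_{j \neq i,\, 1 \le \ell \le k-1}\bigr),
\]
whose alternating structure should annihilate cross-terms via Vandermonde-type identities with repeated rows. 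For larger $k$ the naive cofactor may not kill every orbit, in which case one would either twist by extra parameters or pass to a distinguished ``special subset'' $N^*\subseteq N$ of comparable size, matching the paper's abstract reference to a ``new special set of points inside the norm hypersurface generalizing the classical Hermitian unital.''

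The hard part is this algebraic verification: exhibiting $v(a)$ that simultaneously annihilates all the $C_S$, together with the non-degeneracy check $\sum_i \Norm(v_i(a)) \neq 0$ on all but a lower-order locus of $N$. Once this is established, the induced matching on $N^*$ has size $\asymp q^{k-1/k}$, and the lifting step gives $\IM(d,q)\gg_d q^{d-1/k}$ for every $d\ge k$.
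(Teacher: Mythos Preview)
Your setup is right: the norm hypersurface $\mathcal{H}_k=\{\sum_i \Norm(x_i)=1\}$ is the correct object, its size is $\asymp q^{k-1/k}$, and the lifting from $d=k$ to $d>k$ via \eqref{eq:IMlift} is exactly how the paper finishes. The gap is precisely where you flag it: the choice of direction $v(a)$.

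Your cofactor candidate is the natural ``tangent'' guess, but you already concede it need not work for $k\ge 3$, and there is no evident repair along those lines. The difficulty is structural: for a generic direction $v$, the cross-terms $C_S(a,v)$ impose (after modding out by Frobenius) on the order of $2^k/k$ independent conditions, while $v$ carries only $k$ parameters. Alternating/Vandermonde identities kill some orbits, but there is no reason to expect a single closed formula in the $a_i$ and their conjugates to annihilate them all.

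The paper sidesteps this by a different mechanism. Rather than a direction depending algebraically on $a$, it \emph{restricts to lines of a special shape}: fix scalars $t_1,\ldots,t_k\in\F_{q_0}$ in the \emph{base field} (with $t_k=1$) and take
\[
\ell_a=\bigl\{(a_1(1+\mu t_1),\ldots,a_k(1+\mu t_k)):\ \mu\in\F_q\bigr\}.
\]
Because each $t_i$ is Frobenius-fixed, $\Norm(1+\mu t_i)=\prod_j(1+\sigma^j(\mu)t_i)=\sum_{r=0}^k t_i^r\, e_r(\mu)$ is a polynomial in the single scalar $t_i$, with coefficients the elementary symmetric functions $e_r(\mu)$ of the conjugates of $\mu$. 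Hence
\[
\sum_i \Norm(a_i)\,\Norm(1+\mu t_i)=\sum_{r=0}^k e_r(\mu)\Bigl(\sum_i A_i t_i^r\Bigr),\qquad A_i:=\Norm(a_i).
\]
The cross-terms are now indexed by $r\in\{1,\ldots,k-1\}$ --- only $k-1$ of them, not $2^k-2$ --- and they vanish simultaneously exactly when the $A_i$ are the Lagrange interpolation weights at the nodes $t_i$, i.e.\ $A_i=\prod_{j\ne i}t_j/(t_j-t_i)$. That is the ``special subset'' alluded to in the abstract: points of $\mathcal{H}_k$ whose norm vector $(\Norm(a_1),\ldots,\Norm(a_k))$ is a Lagrange weight vector for some tuple of distinct nonzero $t_i\in\F_{q_0}$. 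A B\'ezout argument shows there are $\gg_k q_0^{k-1}$ such weight vectors, and each has $((q-1)/(q_0-1))^k$ norm-preimages, giving $|P|\gg_k q^{k-1/k}$.

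So the missing idea is: take $v_i=a_i t_i$ with $t_i\in\F_{q_0}$, which collapses the $2^k-2$ cross-terms to $k-1$ via the norm expansion, and then kill those with Lagrange weights.
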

Instead of considering ovoids, the proof of Theorem \ref{thm:small-power} will proceed by constructing a set of points inside a norm hypersurface, which will directly generalize the Hermitian unital construction from \eqref{eq:hermitian-curve} and \eqref{eq:hermitian-tangent}. For these reasons, we believe this construction is interesting for independent reasons and should have more applications. We discuss the proofs of Corollary \ref{cor:prime-power} and of Theorem \ref{thm:small-power} in Section \ref{sec:matching-highpower-II}. 

Finally, by combining Corollary~\ref{cor:prime-power}, Theorem~\ref{thm:small-power} and a new argument, we achieve bounds of the form $q^{d - o_d(1)}$ whenever the base of $q$ is much larger than the exponent.

\begin{theorem}
    \label{thm:prime-power-high-dim}
    There exists constants $\varepsilon_d \ll (\log \log d)^{-1}$, such that if $q = p^t$ is a prime power, then
    $$\IM(d, q) \gg_d q^{d - \varepsilon_d - 2 \log t / \log p}.$$
\end{theorem}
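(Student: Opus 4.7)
Write $q = p^t$; we aim to bound the loss $L := d - \log_q \IM(d, q)$ by $\varepsilon_d + 2 \log t/\log p$, where $\varepsilon_d$ refers to the constant we are constructing in Theorem~\ref{thm:prime-power-high-dim}. The plan is a case analysis on the triple $(d, t, p)$, combining Theorem~\ref{thm:prime-high-dim}, Corollary~\ref{cor:prime-power}, Theorem~\ref{thm:small-power}, and a new argument in the last case; we pick $\varepsilon_d$ large enough to absorb each contribution.

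If $t = 1$, Theorem~\ref{thm:prime-high-dim} directly gives $L \ll (\log d)^{-1}$, which fits within $\varepsilon_d$. If $p \le t$, then $2\log t/\log p \ge 2$ and so any $O(1)$ loss is absorbed: Theorem~\ref{thm:small-power} yields $L \le 1/t$ when $d \ge t$, while iterating \eqref{eq:IMlift} from the Sz\H{o}nyi-type baseline $\IM(2, q) \gg q^{1+1/t}$ yields $L \le 1 - 1/t$ when $d < t$. If $p > t$ and $d \ge t$, both Theorem~\ref{thm:small-power} (loss $1/t$) and Corollary~\ref{cor:prime-power} (loss $\ll t/\log d$) apply; choosing whichever is smaller gives
\[
  L \le \min\!\left(\tfrac{1}{t},\tfrac{Ct}{\log d}\right) \ll \frac{1}{\sqrt{\log d}} = o\!\left(\frac{1}{\log \log d}\right),
\]
again absorbed by $\varepsilon_d$. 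If $p > t$ but $d < t$ and $p \le t^2$, the slack $2 \log t/\log p \ge 1$ still absorbs the loss $1 - 1/t$ coming from \eqref{basepower} and \eqref{eq:IMlift}.

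The only case requiring a genuinely new argument is $p > t^2$ together with $d < t$, where the allowed slack $2 \log t/\log p < 1$ makes none of the baselines tight. My plan here is to apply Theorem~\ref{thm:small-power} in the enlarged dimension $d' = t$ to obtain an induced matching in $\F_q^t$ of size $\gg q^{t - 1/t}$, and then push it down to $\F_q^d$ via a random $\F_q$-linear surjection $\pi : \F_q^t \to \F_q^d$. A potentially spurious incidence $\pi(x_j) \in \pi(L_i)$ for $j \ne i$ corresponds exactly to $x_j - x_i \in \F_q v_i + \ker \pi$, an event of probability $\approx q^{d-t}$ over a uniformly random $(t-d)$-dimensional kernel by a first-moment calculation. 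A Tur\'an-type removal plus amplification over $p^{\Omega(t)}$ parallel projections (where the hypothesis $p \gg t$ enters decisively) recovers an induced submatching of the desired size $\gg q^{d - O(\log t/\log p)}$. The hardest step is the amplification: showing that the losses from assembling many projected matchings sum to only $O(\log t / \log p)$ rather than a larger constant; this is the unique place where $p \gg t$ is used essentially, and is what produces the correction term $2\log t/\log p$ in the final bound.
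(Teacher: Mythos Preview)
Your case analysis for the ``easy'' regimes ($t=1$; $p\le t$; $p>t$ with $d\ge t$; and $t<p\le t^2$ with $d<t$) is basically sound and matches the paper's own trichotomy (small $t$, medium $t$, large $t$) up to cosmetic rearrangement.

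The problem is the hard case $p>t^2$, $d<t$, where your random-projection plan does not work. First, the probability estimate is off: for a fixed nonzero vector $w=x_j-x_i\notin\F_q v_i$ and a uniformly random $(t-d)$-dimensional kernel, the event $w\in\F_q v_i+\ker\pi$ is equivalent to $\pi(w)\in\F_q\,\pi(v_i)$, and this has probability $\asymp q^{1-d}$, not $q^{d-t}$. With the correct probability, and starting from a matching of size $m\asymp q^{t-1/t}$ in $\F_q^t$, the expected number of spurious incidences after projection is $\asymp m^2 q^{1-d}\asymp q^{2t+1-d-2/t}$, which is far larger than $m$ whenever $t>d$. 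A Tur\'an-type deletion then yields at best an induced submatching of size $m^2/(m^2 q^{1-d})=q^{d-1}$, which is the trivial bound. The ``amplification over $p^{\Omega(t)}$ parallel projections'' step is not specified, and there is no obvious way to take a union of many projected matchings without re-introducing cross incidences; the hypothesis $p\gg t$ does not help here because the obstruction is governed by $q$, not $p$. So this case has a genuine gap.

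The paper handles this regime by an entirely different, constructive argument (Lemma~\ref{lem:prime-power-high-dim-final-piece}): it replays the Ruzsa-lift machinery of Section~\ref{sec:prime-highdim} with $\Z$ replaced by the polynomial ring $\Z[T]$, using $\F_q\cong\Z[T]/(p,\phi(T))$. The Waring step over $\Z$ is replaced by an explicit polynomial identity (Proposition~\ref{prop:shifted-kth-power-sum}), and the $k$th-power-difference-free set $A$ is woven into the coefficients of the polynomial variables. The resulting matching in $\F_q^{|J_k|+1}$ has size $(c_k)^t t^{-t}\,q^{|J_k|+1-\delta_k}$, and it is precisely the $t^{-t}$ factor (arising from coefficient-growth bookkeeping in $\Z[T]$) that produces the correction $2\log t/\log p$ in the final exponent. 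There is no projection step at all.
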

\subsection{Small Nikodym sets}
Combining our point--line matching construction with Proposition~\ref{prop:nikodym-basic}, we can immediately generate improved constructions of Nikodym sets in dimensions $\geq 3$. Before we make this more precise, let us first review the literature on this problem. 

For $d = 2$, the best known lower and upper bounds are of the form
$$q^2 - q^{3/2} - 1 \leq \Nik(2, q) \leq \begin{cases}
    q^2 - q^{3/2} + O(q \log q), \text{$q\ $is a perfect square} \\
    q^2 - O(q \log q), \text{$q\ $ is not a perfect square}
\end{cases}.$$
For $d \geq 3$, there is a significant gap between lower and upper bounds on $\Nik(d, q)$. A sequence of works starting from Dvir's breakthrough \cite{BukhChao2021, Dvir2009Kakeya,LundSarafWolf2018} showed the general lower bound
$$\Nik(d, q) \geq \frac{q^d}{2^{d - 1}} + O(q^{d - 1}).$$
If the characteristic $q_0$ of $\F_q$ is bounded by a constant, then a much stronger lower bound is known \cite{guo2013affine}: for some $\epsilon = \epsilon(q_0, d) > 0$, we have
$$\Nik(d, q) \geq q^d - O(q^{(1 - \epsilon) d}).$$
The problem of upper bounds is highlighted in recent works \cite{Deepmind2025, Tao2025} of Tao in collaboration with Georgiev, G\'{o}mez-Serrano, Wagner and the AI tools \texttt{AlphaEvolve} and \texttt{Deep Think} operated by Google DeepMind. It is the first of 67 mathematical problems considered by the collaboration. Tao \cite{Tao2025} first noted that for any $q, d_1, d_2$ we have the ``product construction"
$$\Nik(d_1 + d_2, q) \leq \Nik(d_1, q) \Nik(d_2, q).$$
By taking the product of $2$-dimensional Nikodym sets, we have
$$\Nik(d, q) \leq
    q^d - \lfloor \frac{d}{2} \rfloor q^{d - 1/2} + O(q^{d - 1} \log q), \text{when $q$ is a perfect square}.$$
The focus of Tao's paper is on non-square $q$ with unbounded characteristics. Tao notes that a simple probabilistic argument, i.e. picking each point independently at random, shows that for every odd prime power $q$, we have
$$\Nik(d, q) \leq q^d - (d - 1 + o(1))q^{d - 1} \log q.$$
Via a combination of \texttt{AlphaEvolve} (for $q$ prime), \texttt{Deep Think}, and human guidance, Tao identified a construction based on deleting random quadratic varieties, which improves the baseline by a constant factor
$$\Nik(d, q) \leq q^d - \left(\frac{d - 2}{\log 2} + 1 +  o(1)\right)q^{d - 1} \log q.$$
As an application of our results on point--line matchings, we improve Tao's bound by polynomial factors when $q$ is a prime or most prime powers. Again, the only regime where we cannot improve Tao's bound is when $q$ is a high power of a small prime. 

\begin{theorem}
    \label{thm:nikodym-3d}
    1) For $d = 3$ and every odd prime power $q = p^t$, we have
    $$\Nik(3, q) \leq q^3 - \Omega_t(q^{2.1167})$$
    2) For sufficiently large $d \geq 3$ and $q = p^t$ a prime power, we have
    $$\Nik(d, q) \leq q^d - \Omega_d\left(q^{d - \epsilon_d - 2\log t / \log p}\right).$$
    where $\epsilon_d \ll (\log\log d)^{-1}$.
\end{theorem}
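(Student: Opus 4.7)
The plan for both parts is to convert induced matching lower bounds from the preceding sections into Nikodym set upper bounds via Proposition~\ref{prop:nikodym-basic}. More precisely, if we are given an induced matching of size $M$ in $\II_q^{(d-1)}$, then by part~(2) of that proposition the complement of the $M$ matched points in $\F_q^{d-1}$ is a weak Nikodym set, and by part~(3) the cylinder formed by multiplying it with $\F_q$ is a genuine Nikodym set in $\F_q^d$ whose complement has size exactly $qM$. Consequently, any lower bound $\IM(d-1,q)\gg q^{\alpha}$ automatically translates into the Nikodym upper bound $\Nik(d,q)\leq q^d-\Omega(q^{\alpha+1})$, and our task reduces to selecting the best induced matching bound available for each $q$ and each dimension.

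Part~(2) is then a direct consequence of Theorem~\ref{thm:prime-power-high-dim}. Applying it in dimension $d-1$ yields an induced matching of size $\gg_d q^{(d-1)-\varepsilon_{d-1}-2\log t/\log p}$, and the transfer above produces the desired Nikodym set in $\F_q^d$; since $\varepsilon_{d-1}$ satisfies the same $\ll(\log\log d)^{-1}$ estimate as $\varepsilon_d$, the exponent agrees with the statement after a harmless renaming.

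Part~(1) with $d=3$ requires choosing between several constructions depending on the arithmetic of $q=p^t$. When $q$ is prime, Theorem~\ref{thm:d=2} supplies $\IM(2,q)\gg q^{1.2334}$ and hence $\Nik(3,q)\leq q^3-\Omega(q^{2.2334})$, comfortably beating the target. When $q$ is a perfect $k$-th power for some $k\in\{2,3\}$, Theorem~\ref{thm:small-power} applied directly in dimension three delivers $\IM(3,q)\gg q^{3-1/k}$, which also exceeds $q^{2.1167}$. The remaining cases --- $t$ coprime to $6$ and at least $5$ --- are handled by combining the ovoid bound $\IM(2,p^t)\gg p^{t+1}$ of Sz\H{o}nyi et al.\ with the lift~\eqref{eq:IMlift}, supplemented when necessary by a tailored direct $3$-dimensional construction designed so that the resulting exponent meets the uniform target.

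The principal obstacle is precisely this last case: for $q=p^t$ with $t$ a prime coprime to $6$ and at least $11$, the routine lift of the ovoid planar matching gives only $\IM(3,q)\gg q^{2+1/t}$, which falls below $q^{2.1167}$ as $t$ grows. Pushing past this threshold uniformly in $t$ is the technical heart of the argument, and the most natural approach is to build a direct $\F_q^3$ construction --- in the spirit of the norm-hypersurface construction used for Theorem~\ref{thm:small-power}, but not requiring the hypothesis $d\geq k$ --- that yields a polynomial-over-$q^2$ matching for every odd prime power. Verifying that this construction achieves the specific exponent $2.1167$ uniformly, rather than only for generic $t$, is where the detailed case analysis and computation reside.
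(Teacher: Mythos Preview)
Your treatment of Part~(2) is correct and matches the paper exactly: apply Theorem~\ref{thm:prime-power-high-dim} in dimension $d-1$, then lift via Proposition~\ref{prop:nikodym-basic}.

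For Part~(1), your case analysis is sound for $t=1$ (Theorem~\ref{thm:d=2}), for $t$ divisible by $2$ or $3$ (Theorem~\ref{thm:small-power} with $k=2,3$), and for $t\in\{5,7\}$ (the ovoid bound $\IM(2,p^t)\ge p^{t+1}$ lifted to dimension~$3$). But you correctly identify that the ovoid route stalls at exponent $2+1/t$, which drops below $2.1167$ once $t\ge 11$; and you do not actually supply the promised ``tailored direct $3$-dimensional construction'' for these residual $t$. That is a genuine gap, not a technicality: nothing in the earlier sections gives a $3$-dimensional norm-hypersurface construction valid when $k>d$.

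The paper resolves this differently and more simply. It does not work in dimension~$3$ at all for the hard cases; instead it invokes Proposition~\ref{prop:ruzsa-lift-prime-power}, which is a planar construction valid for every odd $t$. That proposition adapts the Ruzsa lift to $\F_{p^t}$ by writing field elements as polynomials in $\F_p[X]/(\phi)$: one takes points $((g(\alpha)+f(\alpha)^2)/2,\,f(\alpha))$ with $\deg f<\tfrac{t+1}{2}$, small integer coefficients on $f$, and the even-degree coefficients of $g$ drawn from a square-difference-free set $A\subset[p/(20t)]$. The degree restriction forces $\deg(f^2)<t$, so the key congruence holds as a genuine polynomial identity, and one reads off a nonzero integer square in $A-A$ just as in the prime case. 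The resulting exponent is
\[
\frac{3t-1}{4t}+\frac{t+1}{2t}\cdot 0.7334,
\]
which decreases monotonically in $t$ to the limit $3/4+0.7334/2=1.1167$. Thus $\IM(2,q)\gg_t q^{1.1167}$ uniformly over odd $t$, and one lift gives $\Nik(3,q)\le q^3-\Omega_t(q^{2.1167})$. (Even $t$ is covered by the Hermitian unital, which does even better.) So the missing ingredient is not a new $3$-dimensional object but the prime-power Ruzsa lift already developed in Section~4.
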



 However, as our construction requires going up one dimension, this method does not construct small Nikodym sets in dimension $2$. Nevertheless, there is an embedding trick one can use, taking advantage of additional extra properties of our high-dimensional matchings, that will allow us to also get the following.
\begin{theorem}
    \label{thm:nikodym-2d}
    There exists an absolute constant $c > 0$ such that for prime $q$, we have
    $$\Nik(2, q) \leq q^{2} - q^{1+c}.$$
\end{theorem}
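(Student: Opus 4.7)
The plan is to upgrade the weak Nikodym set in $\F_q^2$ coming from the induced matching of Theorem~\ref{thm:d=2} into a full Nikodym set by embedding the construction into a higher-dimensional space, where additional covering structure becomes available. The obstruction to going from a weak Nikodym to a Nikodym set in $\F_q^2$ directly is precisely that some external points $x\notin M$ may be ``blocked,'' i.e.\ every line through $x$ meets the weak Nikodym complement $M$; the ambient higher-dimensional construction is what lets us bypass this.

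I would begin by invoking Theorem~\ref{thm:prime-high-dim} (or equivalently the lifting inequality \eqref{eq:IMlift} applied to Theorem~\ref{thm:d=2}) to obtain an induced matching $\{(p_i,\ell_i)\}_{i\in I}$ in $\F_q^D$ for a suitable auxiliary dimension $D\ge 3$, of size $|I|\gg q^{D-\varepsilon_D}$. Writing $M_D:=\{p_i\}\subset \F_q^D$ for the associated weak Nikodym complement, I would then select an affine $2$-plane $\Pi\subset \F_q^D$ and set $M:=M_D\cap \Pi$, viewed inside $\Pi\cong \F_q^2$. A standard averaging argument over $2$-planes will give some $\Pi$ with $|M|\gg q^{1+c}$ for a positive absolute constant $c$; the real content is then to argue that $\Pi\setminus M$ is a Nikodym set in $\Pi$.

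For the matching direction, I would arrange $\Pi$ so that it contains many of the $D$-dimensional matching lines $\ell_i$ through the points of $M$; these restrict to lines in $\Pi$ meeting $M$ only at the corresponding matching point. For the external points, I would use the additional structure of the high-dimensional construction (in particular, the translational and product symmetries coming from the lifting procedure, together with the algebraic features inherited from the Furstenberg--S\'ark\"ozy set underlying Theorem~\ref{thm:d=2}) to exhibit, for each $x\in\Pi\setminus M$, a line in $\Pi$ through $x$ disjoint from $M$.

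The main obstacle is precisely this last step. A generic $2$-plane slice of a weak Nikodym complement in $\F_q^D$ gives only a weak Nikodym complement in $\F_q^2$, and therefore suffers from exactly the same blocking phenomena that Theorem~\ref{thm:d=2} on its own does not resolve. The leverage provided by the embedding trick lies in choosing $\Pi$ to be compatible with the symmetries of $M_D$, so that many covering lines in $\F_q^D$ (not just the matching lines $\ell_i$, but also lines arising from the product structure of the lift) restrict to lines in $\Pi$ which handle the blocked external points. Identifying the right compatibility and verifying the resulting nonblocking condition is the core technical work, and it is what the ``additional extra properties of our high-dimensional matchings'' referred to in the paragraph preceding the theorem are meant to supply.
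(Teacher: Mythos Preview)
Your proposal has the right instinct—use the high-dimensional matching and project down—but the mechanism you describe is not the one that works, and the gap you yourself flag at the end is real and not closed by anything you've written. An affine $2$-plane slice $\Pi\subset\F_q^D$ cannot do the job: the matching lines $\ell_i$ through points of $M_D\cap\Pi$ are lines in $\F_q^D$ with prescribed directions, and for generic $\Pi$ none of them lie in $\Pi$; forcing many of them into $\Pi$ imposes linear constraints incompatible with the product/lift structure you want to invoke. More fundamentally, any $\F_q$-affine projection sees only the $\F_q$-structure of $M_D$, at which level the complement is merely weak Nikodym—exactly the obstruction you identified. Appealing to unspecified ``translational and product symmetries'' does not resolve this.

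What the paper actually does is different in kind. The high-dimensional matching of Section~\ref{sec:prime-highdim} is built over $\Z$ \emph{before} reduction mod $q$: the points sit in a box $[N]^d\times[M]$ with $N\asymp q^{1/d}$, and the escape directions have $\ell_\infty$-norm at most $L$ with $LM\le N$. This integer structure is pushed into $\F_q^2$ by the base-expansion map
\[
\phi(n_1,\dots,n_d,m)=\Bigl(\textstyle\sum_{i=1}^d n_i(3N)^{i-1}\bmod q,\ m\bmod q\Bigr),
\]
which is injective on the relevant box since $(3N)^d<q$. This is not a $2$-plane slice—it is $\F_q$-nonlinear—and precisely this nonlinearity lets the short integer escape lines survive as genuine lines in $\F_q^2$ (Proposition~\ref{prp:higher dim to 2d}). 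An intermediate product step (Corollary~\ref{cor:lattice-nikodym}) first upgrades the lattice matching so that \emph{every} point of the ambient box has a bounded-slope escape line with last coordinate $1$; points of $\F_q^2$ outside the image of the box are then handled trivially by axis-parallel lines. The missing idea in your proposal is thus that the ``embedding trick'' is not about choosing a clever $\F_q$-affine $2$-plane, but about exploiting the bounded integer coordinates of the construction via a digit-encoding projection.
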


\subsection{Minimal blocking sets} Let $\PG(2, q)$ denote the projective plane over $\F_q$. We say $B \subset \PG(2, q)$ is a \textit{blocking set} if for every affine line $\ell\subset \PG(2, q)$, $\ell\cap B\neq \emptyset$. We say $B$ is a minimal blocking set, if $B'$ is not a blocking set for every proper subset $B'\subset B$. Equivalently, every point
of a minimal blocking set is \emph{essential}: for each $p\in B$ there exists a \emph{tangent line}
$\ell$ through $p$ such that $\ell\cap B=\{p\}$. Blocking sets and their extremal variants have a long history
and appear throughout finite geometry, coding theory, and incidence combinatorics; see, for instance,
the survey of Blokhuis~\cite{Blokhuis1996BlockingSets}.
In particular, Bruen--Thas~\cite{BruenThas1977} showed that minimal blocking sets can have size on the
$q^{3/2}$ scale (and that this scale is sharp when $q$ is a square, as witnessed by the Hermitian unital). 

The induced-matching problem considered in this paper is in fact closely related to the ``tangent line''
characterization above: the affine part of a minimal blocking set automatically supports an induced matching. In a remarkable paper \cite{Szonyi1992}, Sz\H{o}nyi~ constructed large minimal blocking sets in $\mathrm{PG}(2,q)$ using (what can be rephrased as) independent sets in Paley-type
Cayley graphs. This construction will be the starting point of our proof of Theorem \ref{thm:d=2}. In Section \ref{sec:prime-2d}, we will start by isolating the induced-matching aspect of this lift, and then observe that using dense sets without square differences in the integers rather than Paley graphs leads to significantly larger point-line induced matchings. This will be the content of Theorem \ref{thm:d=2}. Subsequently, we will generalize this new lift to higher dimensions in Theorems \ref{thm:prime-high-dim} and \ref{thm:prime-power-high-dim}. 

Before we get to that, in this section we would like to record the following simple connection between minimal blocking sets and Nikodym sets in $\mathbb{F}_{q}^{2}$.

\begin{proposition} \label{prop:blocking}
Let $B(2,q)$ denote the maximum cardinality of a minimal blocking set in $\PG(2, q)$. Then,
 $$B(2,q)\ge q^2-\Nik(2,q).$$
\end{proposition}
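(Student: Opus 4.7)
\emph{Proof plan.} Let $N\subset \F_q^2$ be a Nikodym set with $|N|=\Nik(2,q)$, and set $S:=\F_q^2\setminus N$, so that $|S|=q^2-\Nik(2,q)$. The plan is to construct a minimal blocking set $B\subseteq \PG(2,q)$ whose affine part contains $S$; the inequality $|B|\ge |S|$ then immediately yields the bound $B(2,q)\ge q^2-\Nik(2,q)$.

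For each $x\in S$, the Nikodym property applied to $x$ provides a direction $v(x)\in\F_q^2\setminus\{0\}$ with $\ell(x,v(x))^{\!*}\subseteq N$; equivalently, the affine line $\ell(x,v(x))$ meets $S$ only at $x$, so it serves as an affine tangent line to $S$ at $x$. The naive candidate $B_0:=S\cup L_\infty$ is visibly a blocking set of $\PG(2,q)$ (since $L_\infty$ meets every affine line at its direction point), but $B_0$ is not minimal: every $x\in S$ is inessential because the tangent $\ell(x,v(x))$ also contains the infinity point of direction $v(x)$, which lies in $L_\infty\subseteq B_0$. My refinement is
\[
B_1 := S \cup (L_\infty \setminus D), \qquad D := \{\text{infinity points of the directions }v(x):\ x\in S\}.
\]
In $B_1$ each tangent $\ell(x,v(x))$ meets $B_1$ only at $x$, so every point of $S$ becomes essential.

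The principal obstacle is that $B_1$ may no longer be a blocking set: an affine line parallel to some $\ell(x,v(x))$ that avoids $S$ entirely would be unblocked, as its infinity point has been removed. Such a bad line must be contained in $N$, and this can occur only when $v(x)$ coincides with the Nikodym direction $v(y)$ of some $y\in N$ whose entire line $\ell(y,v(y))$ lies in $N$. My plan to avoid these conflicts is to exploit the freedom in the Nikodym assignment: the set $V(x):=\{v : \ell(x,v)^{\!*}\subseteq N\}$ of valid tangent directions at $x\in S$ is in general much larger than a single element, and a Hall-type matching on the bipartite graph between $S$ and the set of ``bad'' directions (those admitting a fully $N$-contained parallel line) should furnish a conflict-free assignment $x\mapsto v(x)\in V(x)$. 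Verifying the Hall condition is the main technical step I expect. A natural fallback, if the Hall condition fails at a few points, is to augment $B_1$ by one extra affine blocker in $\ell\cap N$ for each remaining bad parallel line $\ell$, chosen outside the union of all the tangents $\ell(x,v(x))$; such a point exists by a simple counting comparison between the total size $|S|(q-1)$ of the tangents and the size of $N$, and its inclusion preserves the essentiality of every existing point of $S$.

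Once $B_1$ (possibly after augmentation) is verified to be a blocking set, I would finally remove any points of the infinity portion $L_\infty\setminus D$ that are still inessential; these are all points at infinity, so the removal keeps $S$ intact. The outcome is a minimal blocking set containing $S$, of size at least $|S|=q^2-\Nik(2,q)$, establishing the proposition.
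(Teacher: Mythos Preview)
Your primal approach has genuine gaps that the proposal does not close.

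The Hall-matching step is only a hope: you give no reason why each $x\in S$ admits a tangent direction in $V(x)$ that is not ``bad'', and there is no obstruction to a Nikodym set in which some $x\in S$ has $V(x)$ consisting entirely of bad directions. The fallback counting is also incorrect. A bad line $\ell\subset N$ has $q$ affine points; the $|S|$ tangent lines are pairwise distinct (each meets $S$ in a different singleton) and each meets $\ell$ in at most one point, so together they can cover up to $|S|$ points of $\ell$. Since $|S|=q^2-\Nik(2,q)$ is typically of order $q^{3/2}\gg q$, there is no reason a free point on $\ell$ exists, and the comparison ``$|S|(q-1)$ versus $|N|$'' is not the relevant one. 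You also do not handle the case where $D$ contains all $q+1$ directions, which would leave $L_\infty$ itself unblocked in $B_1$.

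The paper sidesteps all of this by passing to the dual: a minimal blocking set corresponds, via projective duality, to a minimal \emph{cover} (a family of lines whose union is $\PG(2,q)$, no proper subfamily of which covers). One takes the Nikodym line $\ell_v$ through \emph{every} affine point $v\in\F_q^2$, not only through points of $S$, together with $\ell_\infty$; this family $L_0$ is automatically a cover. The single clean observation is that for $v\in S$ the line $\ell_v$ is the unique member of $L_0$ containing $v$, since every other $\ell_w$ satisfies $\ell_w\setminus\{w\}\subset N$ and $\ell_\infty$ contains no affine point. Hence any minimal sub-cover of $L_0$ must retain $\{\ell_v:v\in S\}$ and therefore has size at least $|S|=q^2-\Nik(2,q)$. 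Invoking the Nikodym line at \emph{all} affine points, rather than only at points of $S$, is precisely the idea your argument is missing.
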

Combining this with Theorem~\ref{thm:nikodym-2d}, we settle a longstanding problem in finite geometry: We construct a polynomially larger blocking set in $\operatorname{PG}(2,q)$ when $q$ is a prime.

\begin{theorem} \label{thm:blocking-2d-poly}
There exists an absolute constant $c > 0$ such that the following holds: for every prime $q$, there exists a minimal blocking set in $\operatorname{PG}(2,q)$ of size at least $\Omega(q^{1+c})$, i.e.
    $$B(2,q) \gg q^{1+c}.$$ 
\end{theorem}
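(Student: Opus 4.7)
The plan is to derive Theorem \ref{thm:blocking-2d-poly} as an immediate corollary of the two preceding results in this subsection. Concretely, I would combine Proposition \ref{prop:blocking}, which gives the reduction $B(2,q) \geq q^2 - \Nik(2,q)$, with Theorem \ref{thm:nikodym-2d}, which for prime $q$ provides the upper bound $\Nik(2,q) \leq q^2 - q^{1+c}$ for some absolute constant $c > 0$. Chaining these two inequalities together yields
$$B(2,q) \;\geq\; q^2 - \Nik(2,q) \;\geq\; q^2 - \bigl(q^2 - q^{1+c}\bigr) \;=\; q^{1+c},$$
which is precisely the desired bound, with the same constant $c$ supplied by Theorem \ref{thm:nikodym-2d}.

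Geometrically, what is happening is the following: take a minimum-size Nikodym set $N \subset \F_q^2$ supplied by Theorem \ref{thm:nikodym-2d}; its affine complement $\F_q^2 \setminus N$ has cardinality at least $q^{1+c}$; and Proposition \ref{prop:blocking} tells us that this complement survives (possibly after augmenting with appropriate points on the line at infinity to block any residual affine lines, with the Nikodym property certifying that each affine point is essential via a tangent line meeting $\F_q^2 \setminus N$ only at that point) as a minimal blocking set in $\PG(2,q)$ whose cardinality is at least $|\F_q^2 \setminus N|$.

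The main obstacle is therefore not in this final combination step, which is a single chain of inequalities, but in the two inputs themselves. Proposition \ref{prop:blocking} is essentially a structural observation. The substantive work lies in Theorem \ref{thm:nikodym-2d}, whose proof requires adapting the higher-dimensional matching constructions of Theorems \ref{thm:prime-high-dim} and \ref{thm:prime-power-high-dim} back down to two dimensions via the embedding trick alluded to in the paragraph before its statement — this is necessary because the direct dimension-lifting route through Proposition \ref{prop:nikodym-basic} would only give small Nikodym sets in dimension $\geq 3$. Once those inputs are granted, Theorem \ref{thm:blocking-2d-poly} follows immediately.
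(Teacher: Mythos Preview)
Your proposal is correct and matches the paper's approach exactly: the paper derives Theorem~\ref{thm:blocking-2d-poly} by combining Proposition~\ref{prop:blocking} with Theorem~\ref{thm:nikodym-2d}, precisely the chain of inequalities you wrote down. One minor remark on your geometric paragraph: the paper's proof of Proposition~\ref{prop:blocking} works in the dual picture (minimal covers rather than blocking sets), so the object of size $\ge q^2-|N|$ is a family of lines $\{\ell_v\}_{v\notin N}$ rather than the point set $\F_q^2\setminus N$ itself, but this does not affect your argument since you correctly treat Proposition~\ref{prop:blocking} as a black box.
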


\subsection{The minimal distance problem}

Determining $\IM(d,q)$ may be viewed as a finite-field analogue for Euclidean point--line separation problems. In particular, in their most recent work on the Heilbronn triangle problem \cite{CPZ}, Cohen, Pohoata and Zakharov introduced the following \emph{minimal distance problem}:
given pairs $\{p_i\in \ell_i\}_{i=1}^n$ with $p_i\in[0,1]^2$ and $\ell_i$ a line through $p_i$, what is the maximum value of
\[
  \delta := \min_{i\neq j} d(p_i,\ell_j),
\]
where $d(\cdot,\cdot)$ denotes Euclidean distance. Observe that this can be easily rephrased induced matching problem between points and $\delta$--tubes. In \cite{CPZ}, they showed that necessarily $\delta\lesssim n^{-2/3+o(1)}$,
which in turn implied the current record of $\Delta(n)\lesssim n^{-7/6+o(1)}$ for the Heilbronn triangle problem
($\Delta(n)$ being the smallest triangle area determined by $n$ points in the unit square); see \cite{CPZ} and the discussion therein.

In \cite{maldague2025heilbronn}, Maldague, Wang and Zakharov also recently proposed higher-dimensional analogues of the minimal distance problem for point--line pairs, which we denote by $\PL_d(\gamma)$. 
\begin{definition}
    Let $\PL_d(\gamma)$ be the following proposition: for every large $n$ and any points $p_1, \cdots, p_n \in [0, 1]^d$ and lines $\ell_1, \cdots, \ell_n$ with $p_i \in \ell_i$, there exists $i \neq j$ with 
    $$d_\infty(p_i, \ell_j) \leq n^{-\frac{1}{d-\gamma} + o(1)}.$$
    Here the $d_{\infty}$--distance between point and line is defined as
    $$d_\infty(p, \ell) := \min_{q \in \ell} \lVert{p - q \rVert}_\infty.$$
\end{definition}
This choice of norm is convenient for product-type constructions and is standard in this context (recall that all norms are equivalent up to dimension-dependent constants). We also note that the statement $\PL_d(0)$ is trivially true in any dimension $d$: it simply reflects the fact that among any $n$ points inside $[0,1]^d$ there always exist two points within distance $2n^{-1/d}$ from each other (a simple consequence of the pigeonhole principle). 

In dimension $2$, the main result from \cite{CPZ} established precisely the validity of $\PL_2(1/2)$. In \cite{maldague2025heilbronn}, Maldague, Wang, and Zakharov recently extended the methods from \cite{CPZ0} and \cite{CPZ} and also established the $3$--dimensional analogue $\PL_3(\gamma_0)$, for some absolute constant $\gamma_0>0$. In turn, this implied the first polynomial improvement for the Heilbronn triangle problem in three dimensions. In dimension $d \geq 4$, determining whether $\PL_d(\gamma)$ holds for any $\gamma > 0$ is an open problem with connection to higher--dimensional Heilbronn triangle problems.

In this paper, we find new constructions that show $\PL_d(\gamma)$ cannot hold for certain $\gamma$. Prior to this work, the best result in this direction comes from a recent elegant construction of Logunov and Zakharov \cite{LogunovZakharov2025Fractal}. In this paper, they introduce a self-affine (``fractal-like'') family of point--line pairs $(p_i, \ell_i)$ in $[0,1]^2$ with
\begin{equation} \label{LZ} 
  d_{\infty}(p_i, \ell_j)\ \gg\ n^{\eta -1}, \quad \forall i \neq j
\end{equation}
for some absolute, yet inexplicit, constant $\eta >0$. This translates to the fact that $\PL_2(1-\eta')$ fails for any constant $\eta' \in (0, \frac{\eta}{1 - \eta})$. We note that
\begin{equation} \label{eq:PLrelation}
  \PL_d(\gamma)\ \Longrightarrow\ \PL_{d-1}(\gamma),
\end{equation}
since one can embed a configuration in $[0, 1]^{d - 1}$ into a coordinate hyperplane in $[0,1]^d$ and take well--separated translates in the new coordinate. Hence \eqref{LZ} also implies that $\PL_d(1 - \eta')$ fails in every dimension $d \geq 2$.

\medskip

Our contribution is a general mechanism for producing well-separated Euclidean point--line configurations from
large \emph{integer} point--line matchings with \emph{bounded slopes}.
\begin{theorem}
\label{thm:lattice-to-PL}
Fix $d\ge 1$ and let $N,M,L\ge 1$ be integers with $N\ge ML$.
Let $P\subset [N]^d\times [M]\subset \mathbb{Z}^{d+1}$ be a set of lattice points.
Assume that for each $p\in P$ we are given an integer direction vector
\[
  s_p=(u_p,1)\in \mathbb{Z}^{d+1}
  \qquad\text{with}\qquad
  \|u_p\|_\infty\le L,
\]
such that for all distinct $p,p'\in P$ one has
\[
  p'\notin p+\mathbb{R}s_p
\]
which, due to the last coordinate of $s_p$ being $1$, is equivalent to $p'\notin p+\mathbb{Z}s_p$. Then there exists a set of points $p_1, \cdots, p_{|P|} \in [0, 1]^{d + 1}$ and lines $\ell_1, \cdots, \ell_{|P|}$ with $p_i \in \ell_i$ such that for each $i \neq j$, we have
\[
  d_\infty(p_i, \ell_j)\ \ge\ \frac{1}{2N}.
\]
\end{theorem}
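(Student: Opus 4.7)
The plan is to apply an anisotropic rescaling that simultaneously fits $P$ inside the unit cube and ``flattens'' the variation in slopes across the lines. Specifically, I would define $\Phi\colon \mathbb{R}^d \times \mathbb{R} \to \mathbb{R}^{d+1}$ by $\Phi(x,y) := (x/N,\, Ly/N)$. Because $P\subset [N]^d\times [M]$ and the hypothesis $N\ge ML$ gives $\Phi(P)\subset [0,1]^{d+1}$, the rescaled points $\hat p := \Phi(p)$ are admissible as the output points. For the output lines, I take $\ell_p$ to be the line through $\hat p$ with direction $v_p := (u_p, L)$, which is the image under $\Phi$ (up to a global factor of $N$) of the original direction $s_p = (u_p,1)$; in particular, $\hat p \in \ell_p$.

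The bulk of the work is then a single distance computation for distinct $p=(x,y)$ and $p'=(x',y')$ in $P$. Parametrising $\ell_p$ by $t\mapsto \hat p + tv_p$ and unpacking the infinity norm coordinate-wise, the substitution $s = tN$ reduces the desired inequality $d_\infty(\widehat{p'},\ell_p)\ge 1/(2N)$ to the lattice-scale bound
\[
  \min_{s\in\mathbb{R}}\ \max\bigl(\|(x'-x) - s u_p\|_\infty,\ L\,|(y'-y)-s|\bigr)\ \ge\ \tfrac{1}{2}.
\]
Reindexing with $\delta := (y'-y) - s$ and setting $w := (x'-x) - (y'-y)\,u_p \in \mathbb{Z}^d$ rewrites the inner expression as $\max(\|w+\delta u_p\|_\infty,\ L|\delta|)$, which is what needs to be bounded below by $1/2$.

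The crucial use of the hypothesis is the observation that $w\neq 0$: if $w$ vanished then $p'-p = (y'-y)\,s_p$, contradicting $p'\notin p+\mathbb{Z}\,s_p$. Since $w$ is a nonzero integer vector, $\|w\|_\infty\ge 1$, and I would finish with a two-line case split on the size of $|\delta|$ relative to $1/(2L)$. In the ``large-$\delta$'' case, $L|\delta|\ge 1/2$ directly; in the ``small-$\delta$'' case, the triangle inequality combined with $\|u_p\|_\infty\le L$ yields $\|w+\delta u_p\|_\infty\ge 1 - L\cdot \tfrac{1}{2L} = \tfrac{1}{2}$.

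I do not anticipate any serious obstacle; the only genuinely non-trivial design choice is the scaling factor $L$ in the last coordinate. A naive uniform rescaling by $1/N$ would only deliver separation of order $1/(LN)$, which is too weak by a factor of $L$. Scaling the last coordinate by $L$ rather than $1$ exactly balances the two competing terms ``$\|w+\delta u_p\|_\infty$'' and ``$L|\delta|$'' so that neither can be made small simultaneously, and this is what allows the separation $1/(2N)$ to be independent of $L$.
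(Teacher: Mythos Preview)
Your proposal is correct and is essentially identical to the paper's proof: the paper uses the same anisotropic rescaling $\varphi(n_1,\dots,n_d,m)=(n_1/N,\dots,n_d/N,Lm/N)$, the same lines (with direction $\varphi(s_p)$, a scalar multiple of your $v_p$), and the same two-case split on whether the last-coordinate discrepancy exceeds $1/(2L)$, invoking the integrality of $p'-p-(y'-y)s_p$ in the small case. Your substitutions $\delta$ and $w$ package the computation slightly more cleanly, but the argument is the same.
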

Using this framework, we can pass from our finite-field induced matchings (after choosing convenient integer representatives)
to counterexamples for various $\PL_d(\gamma)$. In dimension $2$, using our proof of Theorem~\ref{thm:d=2}, we significantly improve on the Logunov and Zakharov's result. Furthermore, we discover a novel connection between the minimal distance problem and the Furstenberg--S\'{a}rk\"{o}zy problem.

\begin{corollary}
\label{cor:PL2}
\begin{enumerate}
\item $\PL_2(0.7666)$ is false.

\item If $\PL_2(0.5 + c)$ is true for any $c > 0$, then any square--difference--free subset of $[N]$ has size at most $N^{1 - c + o(1)}$.
\end{enumerate}
\end{corollary}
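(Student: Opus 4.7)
Both parts of the corollary are deduced from a single integer construction, which is then lifted to a well-separated configuration in $[0, 1]^2$ via Theorem~\ref{thm:lattice-to-PL}. For part~(1) the input is Ruzsa's dense square-difference-free set; for part~(2) the input is an arbitrary (hypothetical) such set, combined with $\PL_2(1/2 + c)$ to bound its size.

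\textbf{The parabola matching.} Fix $N \geq 1$, let $A \subset \{1, \ldots, \lfloor N/2\rfloor\}$ be a square-difference-free set (no two distinct elements differ by a non-zero perfect square), and let $T := \lfloor \sqrt{N}/2 \rfloor$. For each $(t, a) \in \{0, 1, \ldots, T\} \times A$, set
\[
  p_{t, a} := (t^2 + a, \, t) \in \Z^2, \qquad s_{t, a} := (2t, 1) \in \Z^2.
\]
The line $p_{t, a} + \R \, s_{t, a}$ meets the level set $\{y = t'\}$ at parameter $\tau = t' - t$ with first coordinate $t^2 + a + 2 t (t' - t)$; demanding equality with the first coordinate of $p_{t', a'}$ reduces to the tangent-to-parabola identity $(t - t')^2 = a - a'$. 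Square-difference-freeness of $A$ then forces $(t, a) = (t', a')$, so the pairs $\{(p_{t, a}, \, p_{t, a} + \R \, s_{t, a})\}$ constitute an induced integer matching of size $(T + 1) |A|$. All points lie in $[0, N] \times [0, T]$, and the direction vectors $(2t, 1)$ satisfy $|2t| \leq 2T$.

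\textbf{Applying Theorem~\ref{thm:lattice-to-PL}.} Invoke the theorem with $d = 1$, $\tilde N = N$, $\tilde M = T + 1$, and $L = 2T$; the hypothesis $\tilde N \geq \tilde M L = 2T(T+1) \leq N - O(\sqrt N)$ holds for all large $N$. It outputs $n := (T + 1) |A|$ point--line pairs $(p_i, \ell_i)$ in $[0, 1]^2$ with
\[
  d_\infty(p_i, \ell_j) \geq \frac{1}{2 \tilde N} = \frac{1}{2 N} \qquad \text{for all } i \neq j.
\]

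\textbf{Part (1).} Take $A$ to be the Ruzsa construction, so $|A| \gg N^\rho$ with $\rho = \tfrac{1}{2}(1 + \log_{65} 7) \approx 0.7334$. Then $n \gg N^{\rho + 1/2} = N^{1.2334 + o(1)}$, and setting $\gamma_0 := 3/2 - \rho \approx 0.7666$ one has $N \ll n^{1/(\rho + 1/2) + o(1)} = n^{1/(2 - \gamma_0) + o(1)}$. Consequently every pair satisfies $d_\infty(p_i, \ell_j) \geq 1/(2N) \gg n^{-1/(2 - \gamma_0) + o(1)}$, directly contradicting $\PL_2(0.7666)$.

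\textbf{Part (2).} Let $A \subset [N]$ be any square-difference-free set (after rescaling $N$ by a factor of $2$, we may assume $A \subset [\lfloor N/2\rfloor]$). The same construction gives $n \asymp \sqrt{N} \cdot |A|$ pairs with $d_\infty \geq 1/(2N)$. If $\PL_2(1/2 + c)$ holds, there must exist a pair with $d_\infty \leq n^{-1/(3/2 - c) + o(1)}$; combining,
\[
  \frac{1}{2N} \leq n^{-1/(3/2 - c) + o(1)} \qquad \Longleftrightarrow \qquad n \leq (2N)^{3/2 - c + o(1)}.
\]
Substituting $n \asymp \sqrt{N} \cdot |A|$ yields $|A| \leq N^{1 - c + o(1)}$, as desired.

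\textbf{Main obstacle.} Algebraically, everything reduces to the tangent identity $(t - t')^2 = a - a'$, which is what couples the arithmetic square-difference condition on $A$ to the geometric non-incidence condition needed by Theorem~\ref{thm:lattice-to-PL}. The only delicate point is the tight parameter budget: the inequality $\tilde N \geq \tilde M L$ forces $t \leq O(\sqrt N)$, and this is precisely what pins down the relation $\gamma_0 = 3/2 - \rho$ between the Furstenberg--S\'{a}rk\"{o}zy density exponent and the $\PL_2$ exponent.
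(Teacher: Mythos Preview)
Your proof is correct and follows essentially the same approach as the paper: both build an integer point--line matching from the tangent lines to the parabola $x = y^2 + a$ (your $(t^2+a,t)$ with slope $(2t,1)$ is just a reparametrization of the paper's set $\{(x,y): 2x - y^2 \in A\}$ with slope $(y,1)$), apply Theorem~\ref{thm:lattice-to-PL}, and then compare the resulting separation $1/(2N)$ against $n^{-1/(2-\gamma)}$. Your version is arguably a bit cleaner since you work directly over $\Z$ rather than carrying the auxiliary prime $q$ from Proposition~\ref{lem:Ruzsa-lift}; the only cosmetic point is that Theorem~\ref{thm:lattice-to-PL} takes $P\subset [N]\times[M]$ with coordinates starting at $1$, so you should translate to avoid $t=0$ (or simply drop that slice).
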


We refer to Section \ref{sec:conclusion} for a discussion of the Furstenberg--S\'{a}rk\"{o}zy problem and more context around Part 2) of Corollary \ref{cor:PL2}.

In higher dimensions, we can leverage the proof of Theorem~\ref{thm:prime-high-dim} to prove the following. 
\begin{corollary}
    \label{cor:PLd}
    For any $\gamma > 0$, there exists some $d_0=d_0(\gamma) $ such that $\PL_{d_0}(\gamma)$ fails. Quantitatively, we have $d_0(\gamma) = \exp(O(\gamma^{-1}))$.
\end{corollary}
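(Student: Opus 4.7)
The plan is to combine Theorem~\ref{thm:prime-high-dim} with Theorem~\ref{thm:lattice-to-PL}, exploiting the internal structure of the matching construction to obtain a direction-bounded integer lift. Given $\gamma > 0$, I would choose $d_0 = \lceil \exp(C/\gamma) \rceil$ for a sufficiently large absolute constant $C$. Since the error in Theorem~\ref{thm:prime-high-dim} satisfies $\varepsilon_d \ll (\log d)^{-1}$, this ensures $\varepsilon_{d_0} < \gamma/2$. The target is to produce, for arbitrarily large primes $q$, a Euclidean configuration of $n \gg_{d_0} q^{d_0 - \varepsilon_{d_0}}$ point--line pairs in $[0,1]^{d_0}$ with pairwise $d_\infty$-separation $\sigma \gtrsim q^{-1-o(1)}$, which will violate the $\PL_{d_0}(\gamma)$ threshold of $n^{-1/(d_0 - \gamma) + o(1)}$.

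First, I would invoke Theorem~\ref{thm:prime-high-dim} for a growing sequence of primes $q$ to produce induced matchings of size $n \gg_{d_0} q^{d_0 - \varepsilon_{d_0}}$ in $\F_q^{d_0}$. Next, the key is an integer embedding compatible with Theorem~\ref{thm:lattice-to-PL}. Here the naive lift fails: representing $\F_q$ by $[q]$ and taking line directions verbatim forces the slope bound $L \asymp q$, giving $N \geq ML \asymp q^2$ and Euclidean separation only $\sim q^{-2}$, which one checks is strong enough only to refute $\PL_{d_0}(\gamma)$ for $\gamma > d_0/2$—useless in the small-$\gamma$ regime. Instead, I would go inside the construction underlying Theorem~\ref{thm:prime-high-dim}: as noted in the paper's outline, it is a higher-dimensional generalization of the Sárközy-set lift behind Theorem~\ref{thm:d=2}, and its line directions lie in a small, structured set of size $q^{o_{d_0}(1)}$ that admits integer representatives with $\ell_\infty$-norm $L = q^{o_{d_0}(1)}$. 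Plugging this into Theorem~\ref{thm:lattice-to-PL} with $M = q$ yields $N = ML = q^{1+o_{d_0}(1)}$, and hence Euclidean separation $\sigma \geq 1/(2N) \gtrsim q^{-1-o_{d_0}(1)}$.

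Finally, I would verify the counterexample. Since $\varepsilon_{d_0} < \gamma/2$, we have
\[
  n^{1/(d_0 - \gamma) - o(1)} \;=\; q^{(d_0 - \varepsilon_{d_0})/(d_0 - \gamma) - o(1)} \;\geq\; q^{1 + \Omega(\gamma/d_0)},
\]
so $n^{-1/(d_0 - \gamma) + o(1)} \ll q^{-1-o(1)} \lesssim \sigma$ for large $q$, refuting $\PL_{d_0}(\gamma)$. The quantitative bound $d_0(\gamma) = \exp(O(\gamma^{-1}))$ is then immediate from the choice of $C$.

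The hard part will be the direction-bounded integer lift: extracting, from the proof of Theorem~\ref{thm:prime-high-dim}, an embedding of the matching into $\mathbb{Z}^{d_0}$ whose slope vectors have $\ell_\infty$-norm $q^{o_{d_0}(1)}$ (so that in Theorem~\ref{thm:lattice-to-PL} we can take $N \asymp q$ rather than $N \asymp q^2$). This cannot be done by treating Theorem~\ref{thm:prime-high-dim} as a black box—a generic $\F_q$-matching has line directions with no small integer representative, and a pigeonhole argument loses too many pairs. Instead, one must revisit the construction and choose integer representatives of $\F_q$-elements in accordance with the underlying integer arithmetic (the generalized Sárközy-type set) that produced the matching, so that the few ``structured'' directions used by the construction automatically have small integer lifts.
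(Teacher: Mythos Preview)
Your high-level plan is exactly the paper's: go inside the proof of Theorem~\ref{thm:prime-high-dim} (this is packaged as Proposition~\ref{prop:lattice matching} / Corollary~\ref{cor:lattice-nikodym}), extract the integer lattice matching with bounded slopes, and feed it into Theorem~\ref{thm:lattice-to-PL}. However, your parameter choice in the application of Theorem~\ref{thm:lattice-to-PL} is wrong, and the resulting loss is actually fatal, not just cosmetic.

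You take $M=q$ and then set $N=ML=q^{1+\delta}$ where $\delta$ is your $o_{d_0}(1)$ slope exponent. Concretely, in the Section~\ref{sec:prime-highdim} construction one has $L\asymp q^{1/k^2}$ and $d_0=|I_k|+1\gtrsim k^4$. Your verification needs the separation $\sigma\asymp q^{-1-\delta}$ to beat $n^{-1/(d_0-\gamma)}\asymp q^{-1-(\gamma-\varepsilon_{d_0})/(d_0-\gamma)}$, i.e.\ it needs $\delta<(\gamma-\varepsilon_{d_0})/(d_0-\gamma)\asymp \gamma/d_0$. But $\delta\asymp 1/k^2$ while $\gamma/d_0\lesssim \gamma/k^4$, so the inequality $\delta<\gamma/d_0$ forces $k^2\lesssim \gamma$, which is incompatible with the requirement $\varepsilon_{d_0}\asymp 1/\log k<\gamma/2$ (this needs $k\gtrsim e^{4/\gamma}$). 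No choice of $d_0$ rescues this: increasing $d_0$ shrinks $\gamma/d_0$ faster than it shrinks $\delta$. Your final line ``$n^{-1/(d_0-\gamma)+o(1)}\ll q^{-1-o(1)}\lesssim\sigma$'' silently conflates the fixed quantity $\delta$ with the $\Omega(\gamma/d_0)$ gain, and that step is exactly where the argument breaks.

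The fix is already sitting inside the construction and you overlooked it: the coordinate whose slope component equals $1$ (the $z$-coordinate in Section~\ref{sec:prime-highdim}) does not range over $[q]$ but over the \emph{small} box $[N/(C_kM)]\asymp[q^{1-1/k^2}]$. So you should take $M_{\text{Thm~\ref{thm:lattice-to-PL}}}\asymp q^{1-1/k^2}$ and $N_{\text{Thm~\ref{thm:lattice-to-PL}}}\asymp q$, which makes $N\ge ML$ hold with equality and gives separation $\asymp 1/q$ on the nose, with no $\delta$ loss. This is precisely what the paper does (via Corollary~\ref{cor:lattice-nikodym}, which records the small box explicitly). Once you make this one-line correction, your argument goes through and matches the paper's.
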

By \eqref{eq:PLrelation}, Corollary \ref{cor:PLd} also implies that $\PL_d(\gamma)$ fails for every $d>d_0(\gamma)$.

\smallskip

\noindent {\bf{Organization.}} The diagram below indicates the organization for the remainder of the paper, with arrows indicating logical dependencies. For readers interested in applications, only the prime cases (Sections~\ref{sec:prime-2d} and \ref{sec:prime-highdim}) are necessary.

\medskip

\begin{figure}[H]
\centering
\begingroup
\hypersetup{draft}
\begin{tikzpicture}[scale=0.8,
  bubble/.style={
    draw, thick, ellipse, align=center, font=\scriptsize,
    text width=3.0cm, minimum height=1.05cm, inner sep=2pt
  },
  cluster/.style={
    draw, thick, ellipse, fill=gray!10,     inner xsep=0pt, inner ysep=18pt   
  },
  arrow/.style={-Stealth, thick}
]

\node[bubble] (S2) at (0, 2.5) {§2: Thm 1.2 \\ $\mathrm{IM}(2,q)$};
\node[bubble] (S3) at (4, 1) {§3: Thm 1.6 \\ $\mathrm{IM}(d,q)$};
\node[bubble] (S4) at (-4, 1) {§4:  Prop 4.1 \\ $\mathrm{IM}(2,p^t)$};
\node[bubble] (S6) at (0, -0.5) {§4-6: Thm 1.7--9 \\ $\mathrm{IM}(d,p^t)$};

\begin{pgfonlayer}{background}
  \node[cluster, fit=(S2)(S3)(S4)(S6)] (IM) {};
\end{pgfonlayer}
\node[font=\small, anchor=north] at ([yshift=-10pt]IM.north) {The induced matching problem};


\node[bubble] (S71)  at (-6,-4.5) {§7.1: Thm 1.10 \\ $\Nik(3+, q)$};
\node[bubble] (S72)  at (0.0,-4.5) {§7.2: Thm 1.11 \\ $\Nik(2, q)$};

\node[bubble] (S73)  at (0,-6.5) {§7.3: Thm 1.13 \\ $B(2, q)$};

\node[bubble] (S8) at (6,-4.5) {§8: Thm 1.15--17 \\ $\PL_{d}(\gamma)$};

\node[bubble] (S9) at (0,-8.5) {§9: Point--hyperplane matchings};

\draw[arrow] (S2) -- (S3);
\draw[arrow] (S2) -- (S4);
\draw[arrow] (S4) -- (S6);
\draw[arrow] (S3) -- (S6);

\draw[arrow] (IM) -- (S71);
\draw[arrow] (IM) -- (S72);
\draw[arrow] (IM) -- (S8);


\draw[arrow] (S72) -- (S73);

\end{tikzpicture}
\endgroup
\end{figure}

\section{Lifting Paley graphs and Ruzsa sets}\label{sec:prime-2d}
In this section, we discuss the ``baseline" lower bound $\IM(2 ,q)\gg q \log q$. Now there are a few different constructions of induced matchings of size $q\log q$ in $\II_q^{(2)}$, but we would like to show an argument inspired by an old paper of Sz\H{o}nyi \cite{Szonyi1992}. Then we shall use a new (but related) argument to prove our first Theorem \ref{thm:d=2}. This new argument will serve as the foundation of almost all our constructions.

First, recall that if $q$ is a prime power with characteristic $q_0\equiv 1\pmod 4$, the \emph{Paley graph} $G_q$ is the (undirected) graph on vertex set $\F_q$ in which
distinct $x,y$ are adjacent iff $x-y$ is a nonzero square. It is well known that $G_q$ is self-complementary (multiplication by a fixed nonsquare induces an isomorphism
to the complement), and hence the independence number $\alpha(G_q)$ and the clique number $\omega(G_q)$ satisfy the relation $\alpha(G_q)=\omega(G_q)$. See for example \cite{sachs1962selfcomplementary} for more details.

In \cite{Szonyi1992}, Sz\H{o}nyi observed that it is possible to use large independent sets in $G_q$ in order to construct large {\it{minimal blocking sets}} in the finite projective plane $\PG(2,q)$. From a minimal blocking  set, it is then easy to construct an induced matching in the point-line incidence graph $\II_q^{(2)}$. 

\begin{proposition}\label{prop:paley-to-im}
If $q$ is a prime power with characteristic $q_0\equiv 1\pmod 4$, then
\[
  \IM(2,q)\ \ge\ q\,\alpha(G_q)\;=\;q\,\omega(G_q).
\]
\end{proposition}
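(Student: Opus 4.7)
The plan is to produce an explicit induced matching of size $q\cdot \alpha(G_q)$ by taking a ``Paley-independent'' translation-family of parabolas together with all of their tangent lines. Let $S\subset \F_q$ be an independent set in $G_q$ of size $\alpha(G_q)$. Because $q_0\equiv 1\pmod 4$, we have $-1$ a square in $\F_q$, so the relation ``$x-y$ is a nonzero square'' is symmetric; in particular, for every distinct $s,s'\in S$, the difference $s-s'$ is \emph{not} a nonzero square in $\F_q$. Note also that $q$ is odd, so division by $2$ is available.

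For each pair $(s,t)\in S\times \F_q$, consider the point $p_{s,t}=(t,\,t^2+s)$ on the translated parabola $y=x^2+s$, together with its tangent line
\[
  \ell_{s,t}=\{(x,\,2tx-t^2+s):\ x\in\F_q\}.
\]
Substituting $x=t$ shows $p_{s,t}\in \ell_{s,t}$. The points are pairwise distinct: if $t=t'$ then the first coordinate agrees and the second coordinate $t^2+s$ is determined by $s$, and if $t\neq t'$ the first coordinates differ. Likewise, the lines are pairwise distinct: the slope $2t$ (together with $q$ odd) recovers $t$, and then the $y$-intercept $-t^2+s$ recovers $s$. So the $q\cdot |S|$ pairs $\{(p_{s,t},\ell_{s,t})\}$ form a matching in $\II_q^{(2)}$.

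To verify the induced property, compute: $p_{s',t'}\in \ell_{s,t}$ is equivalent to ${t'}^2+s'=2tt'-t^2+s$, i.e.\ to the equation
\[
  (t'-t)^2\ =\ s-s'.
\]
Split into cases for $(s,t)\neq (s',t')$. If $s=s'$ and $t\neq t'$, the right side is $0$ while the left side is nonzero. If $s\neq s'$ and $t=t'$, the left side is $0$ while the right side is nonzero. If $s\neq s'$ and $t\neq t'$, the left side $(t'-t)^2$ is a nonzero square in $\F_q$, whereas $s-s'$ is not a nonzero square since $S$ is independent in $G_q$. In every case the equation fails, so $p_{s',t'}\notin \ell_{s,t}$, confirming the induced matching of size $q\cdot\alpha(G_q)$.

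The only real content is the identity $(t'-t)^2=s-s'$ linking the incidence equation to the Paley edge relation; once that is written down there is no obstacle. The equality $\alpha(G_q)=\omega(G_q)$ quoted in the statement is the standard self-complementarity of Paley graphs.
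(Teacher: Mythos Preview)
Your proof is correct and is essentially the same construction as the paper's, up to a coordinate change: you use the translated parabolas $y=x^2+s$ with tangent lines, while the paper uses the point set $\{(x,y):x+y^2\in I\}$ with lines in direction $(-2y,1)$, but in both cases the incidence condition reduces to the same identity $(t'-t)^2=s-s'$ linking to the Paley edge relation.
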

\begin{proof}
Let $I\subset\F_q$ be an independent set in $G_q$, i.e.\ $(I-I)\cap (D_2^\times)=\emptyset$. We define the set
$$P=\left\{(x,y) \in \mathbb{F}_{q}^{2}:\ x+y^2 \in I\right\}.$$
Clearly, $|P|=q|I|$. For each such point $(x,y) \in P$, let $\ell_{x,y} \subset \mathbb{F}_{q}^{2}$ be the line
\[
  \ell_{x,y}:=(x,y)+\{\,\lambda(-2y,1):\lambda\in\F_q\,\} = (x-2\lambda y,\ y+\lambda).
\]
Note that $(x,y) \in \ell_{x,y}$. Crucially, note also that all the points of $\ell_{x,y}$ satisfy
\[
  (x-2\lambda y)+(y+\lambda)^2 = (x+y^2)+\lambda^2=t+\lambda^2,
\]
for some $t \in I$. However, then $t + \lambda^2$ can't be in $I$ unless $\lambda =0$. This means that the line $\ell_{x,y}$ intersects $P$ only at the point $(x,y)$. We conclude that $\{((x,y),\ell_{x,y})\}_{(x,y)\in P}$ is an induced matching in $\II_q^{(2)}$.
\end{proof}
\noindent {\bf Some history.}
Determining the clique number $\omega(G_q)$ of the Paley graph is a classical problem at the intersection of several topics in combinatorics and number theory. The Delsarte--Hoffman eigenvalue method yields the standard ``square-root barrier''
\[
\omega(G_q)\le \sqrt q.
\]
See, for instance, \cite{Yip2021} for all the relevant background. 
When $q=q_0^2$, this barrier is sharp: indeed $\mathbb F_{q_0}^\times\subset (\mathbb F_{q_{0}^2}^\times)^2$, so the subfield $\mathbb F_{q_0}\subset \mathbb F_{q_{0}^2}$ spans a clique of size $q_{0}=\sqrt q$, and hence $\omega(G_{q_{0}^2})=\sqrt q$.
For prime $q\equiv 1\pmod 4$, Hanson--Petridis proved the best known constant-factor improvement,
$\omega(G_q)\le \sqrt{q/2}+1$ \cite{HansonPetridis2021}. Thus, Proposition \ref{prop:paley-to-im} highlights the difficulty of asymptotically improving the upper bound $\IM(2, q) \ll q^{3/2}$ when $q$ is prime. We will discuss more about this problem in Section \ref{sec:conclusion}. 

On the lower-bound side, a standard application of Ramsey's theorem produces the lower bound $\omega(G_q)=\Omega(\log q)$ (see for example \cite{Cohen1988PaleyClique}), and
work of Graham--Ringrose implies that for infinitely many primes $q$ one has
$\omega(G_q)=\Omega(\log q\,\log\log\log q)$ \cite{GrahamRingrose1990LeastNonres}, improving to
$\Omega(\log q\,\log\log q)$ under the Generalized Riemann Hypothesis (see, e.g., \cite{Montgomery1971TopicsMultNT}).
Therefore, Proposition~\ref{prop:paley-to-im} produces the lower bound $\IM(2, q) \gg q \log q$ for all primes $q$, and improves it to $\IM(2, q) = \Omega(q \log q \log\log \log q)$ for infinitely many primes $q$.

We now discuss the polynomial improvement over Sz\H{o}nyi's construction. 

\medskip
\noindent {\it{Proof of Theorem \ref{thm:d=2}}}. The idea is that instead of lifting an independent set in the Paley graph, it is more efficient to lift a so-called {\it{square--difference--free}} set over $\Z$. A set $A \subset \Z$ is square--difference--free if for any distinct $a, a'\in A$, the difference $a - a'$ is not a square. The problem of finding the largest size of a square--difference--free of $[N]$ is known as the Furstenberg--S\'{a}rk\H{o}zy problem \cite{Furstenberg, Sarkozy}. While there have been recent exciting developments regarding the upper bound \cite{GreenSawhney}, here we will only need an older non--trivial lower bound construction due to Ruzsa \cite{Ruzsa}, which was subsequently improved slightly by Beigel--Gasarch and Lewko \cite{BeigelGasarch, Lewko}.
\begin{lemma}
    \label{thm:Ruzsa-square}
    There exists a square--difference--free subset of $[N]$ with size at least $N^{0.7334}$.
\end{lemma}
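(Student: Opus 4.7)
The plan is to follow the classical digit-lift construction of Ruzsa~\cite{Ruzsa}, together with the numerical improvements due to Beigel--Gasarch~\cite{BeigelGasarch} and Lewko~\cite{Lewko}.

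The overall strategy is to choose a modulus $M$ and restricted digit sets $B_i \subseteq \{0, 1, \ldots, M - 1\}$ (depending on the position index $i$, typically only on the parity of $i$), and form the integer set
\[
A \;=\; \left\{\sum_{i = 0}^{k-1} c_i M^i \;:\; c_i \in B_i \text{ for each } i\right\} \subseteq [0, M^k).
\]
Setting $N = M^k$, this set has $|A| = \prod_{i=0}^{k-1} |B_i|$, which is of the form $N^\alpha$ with $\alpha$ determined by the sizes of the $B_i$ relative to $M$.

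The verification that $A$ is square-difference-free proceeds by a $p$-adic analysis. Given distinct $n = \sum c_i M^i$ and $m = \sum d_i M^i$ in $A$, let $j$ be the smallest index of disagreement. Then $M^j \mid (n - m)$ and the residue $(n - m)/M^j \pmod M$ is congruent to $c_j - d_j$, a nonzero element of $B_j - B_j$. Assuming $n - m = t^2$ for some integer $t$, one compares $p$-adic valuations on both sides (for each prime $p \mid M$) to conclude that $j$ must be even and that $c_j - d_j$ must lift to a nonzero quadratic residue modulo $M$; the defining property of the $B_j$'s (that their pairwise differences avoid quadratic residues modulo $M$) then yields a contradiction. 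The parity-based splitting ensures that only even-indexed positions need the restrictive condition, since differences that first disagree at an odd position automatically acquire odd $p$-adic valuation and hence cannot be squares, yielding a ``free'' full digit set at odd positions.

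The main obstacle is the arithmetic optimization underlying the choice of $(M, (B_i))$ so as to maximize the density exponent $\alpha$. This is a delicate combinatorial problem closely related to the Paley clique problem --- a naive prime modulus $M = p$ caps one at $\alpha \le 1/2$, and breaking through this barrier requires the parity-split trick combined with carefully chosen composite moduli for which the relevant ``generalized Paley clique'' (a subset of $\mathbb{Z}/M\mathbb{Z}$ whose difference set avoids quadratic residues modulo $M$) has nontrivial size. The refinements of Beigel--Gasarch and Lewko tune these parameters and employ multi-level digit patterns to reach the target $\alpha \ge 0.7334$.
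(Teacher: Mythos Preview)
Your outline is correct and aligns with the paper's treatment, which does not prove this lemma but simply cites it from \cite{Ruzsa,BeigelGasarch,Lewko}. The digit-lift construction you describe is precisely what the paper does reproduce when proving the $k$-th-power analogue in the next section: take a prime modulus $p\equiv 1\pmod{2k}$, restrict the base-$p$ digits at positions divisible by $k$ to a set $A_p\subset\Z/p\Z$ whose differences avoid nonzero $k$-th powers, leave the remaining digits free, and verify via the $p$-adic valuation argument you sketch. One small correction: the parity-split trick alone, still with a prime modulus, already breaks the $\alpha=1/2$ barrier (Ruzsa's original prime-modulus construction reaches $\approx 0.733$); the further numerical refinements of \cite{BeigelGasarch,Lewko} are only needed to push from $0.733$ to $0.7334$, not to exceed $1/2$.
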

We will use such a set to construct a large point-line induced matching in $\II_q^{(2)}$.
\begin{proposition}
\label{lem:Ruzsa-lift}
Let $q$ be a prime, and $A$ be a square--difference--free subset of $[\lfloor q / 10\rfloor]$. Then we have
$$\IM(2,q) = \Omega(q^{1/2} |A|).$$
\end{proposition}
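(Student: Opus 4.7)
The plan is to mimic the Paley lift of Proposition~\ref{prop:paley-to-im}, replacing the independent set of $G_q$ by the square--difference--free set $A \subset [\lfloor q/10\rfloor]$, at the cost of restricting the $y$--coordinate of the point set to a small integer window of length $L = \Theta(\sqrt q)$. This restriction will be essentially forced by the fact that square--difference--freeness is a \emph{$\Z$--condition}, and will produce the extra $q^{1/2}$ factor in the claimed bound.

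Concretely, set $L := \lfloor \sqrt{q}/5 \rfloor$ (so that $L^2 \le q/10$), identify $\{0, 1, \ldots, L-1\}$ with a subset of $\F_q$, and define
\[
  P := \bigl\{(a - y^2,\ y) \in \F_q^2 : a \in A,\ y \in \{0, 1, \ldots, L-1\}\bigr\},
\]
with the line assignment $\ell_{x,y} := (x, y) + \F_q \cdot (-2y, 1)$, exactly as in the Paley lift. Injectivity of $(a, y) \mapsto (a - y^2, y)$, which follows by recovering $a = x + y^2 \pmod q$, gives $|P| = |A| \cdot L = \Omega(q^{1/2} |A|)$, and $(x, y) \in \ell_{x,y}$ by construction. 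So it suffices to show $\ell_{x,y} \cap P = \{(x, y)\}$ for every $(x, y) \in P$.

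For this step, I would invoke the same algebraic identity underlying the proof of Proposition~\ref{prop:paley-to-im}, namely
\[
  (x - 2\lambda y) + (y + \lambda)^2 \;=\; (x + y^2) + \lambda^2 \;=\; a + \lambda^2 \quad \text{in } \F_q.
\]
If a second point $(a' - (y')^2, y') \in P$ lies on $\ell_{x,y}$, then $y' = y + \lambda$ and both $y, y' \in \{0, \ldots, L-1\}$, so $\lambda$ is represented by an integer of absolute value $< L$, making $\lambda^2$ a nonnegative integer bounded by $L^2 \le q/10$. The identity forces $a' \equiv a + \lambda^2 \pmod q$; the size bounds $a, a' \le q/10$ and $\lambda^2 \le q/10$ imply $a + \lambda^2 < q$, so the congruence is in fact an equality in $\Z$, and hence $a' - a = \lambda^2$ is a nonnegative integer square. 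Square--difference--freeness of $A$ then forces $a = a'$, and therefore $\lambda = 0$ and $(x', y') = (x, y)$.

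The only real subtlety is the interplay between the $\F_q$--identity and the $\Z$--condition on $A$: the identity $(x - 2\lambda y) + (y + \lambda)^2 = (x + y^2) + \lambda^2$ is purely modular, whereas sqf-ness of $A$ only sees integers. The choice $L = \Theta(\sqrt q)$ is engineered so that $\lambda$ lifts uniquely to a small integer and $a + \lambda^2$ never wraps past $q$, letting us transport the mod-$q$ identity to $\Z$ and apply the hypothesis on $A$. The factor $q^{1/2}$ in the conclusion is exactly the width of this ``no--wraparound'' window, and is essentially optimal for this style of argument.
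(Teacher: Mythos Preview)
Your proof is correct and follows essentially the same approach as the paper: restrict the $y$-coordinate to a window of width $\Theta(\sqrt q)$, use the tangent-line identity that shifts the quadratic form by $\lambda^2$, and exploit the small window to lift the mod-$q$ congruence to an equality in $\Z$ before invoking square-difference-freeness. The paper works with the form $2x - y^2$ and slope $(y,1)$ rather than your $x + y^2$ and slope $(-2y,1)$, a cosmetic change (at the price of a harmless parity constraint) that keeps the slope coefficient bounded by $M$ rather than $2M$---useful later for the bounded-slope applications in Section~\ref{sec:Heilbronn}, but immaterial here.
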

One can regard Proposition \ref{lem:Ruzsa-lift} as a direct analogue of Proposition \ref{prop:paley-to-im}, where we lift a square--difference--free subset of $[q/10]$, instead of an independent set $I \subset \F_q$ in the Paley graph, to an induced matching in the point-line incidence graph $\II_q^{(2)}$. 
\begin{proof}
    Let $N = \lfloor q / 3 \rfloor$ and $M = \lfloor \sqrt{q} / 2 \rfloor$. We define the set 
    $$P = \{(x, y) \in [N] \times [M]: 2x - y^2 \in A\}$$
    and for each $p = (x,y) \in P$, define the associated line $\ell_p = \{(x, y) + t(y, 1): t\in \Z\}$. 
    
    We claim that $\{(p,\ell_p):p\in P\}$ is an induced matching in the point--line incidence graph of $\F_q^2$. For the sake of contradiction, assume that distinct $p = (x, y) \in P$ and $p' = (x', y') \in P$ satisfies $p' \in \ell_p$ over $\F_q$. Then there exists $t\in\F_q$ such that
$$(x',y') \equiv (x+ty,\ y+t) \pmod{q}.$$
In particular, from the second coordinate equality we have $t=y'-y \in \F_q$, which after substitution in the first equality gives
$$x' \equiv x+y(y'-y)\pmod q.$$
Because $x,x'\in [M]$ and $y,y'\in [N]$, we have
$$|x+y(y'-y)-x'| \le |x-x'|+|y|\,|y'-y| \le (M-1) + (N-1)^2 < \frac{q}{3}+\frac{q}{4} < q.$$
    so we have the equality over $\Z$
    \begin{equation} \label{Zeq}
    x + y(y' - y) = x'.
    \end{equation}
    Next, let $2x - y^2 = a \in A$ and $2x' - y'^2 = a' \in A$ (recall that $(x,y),(x',y') \in P)$. Using \eqref{Zeq}, it follows that
    $$a - a' = 2(x - x') + y'^2 - y^2 = 2 y(y - y') + y'^2 - y^2 = (y - y')^2,$$
    which is a perfect square. Since $a, a' \in A$ and $A$ is square--difference--free, we have $a = a'$, so $y = y'$ and $x = x'$, contradiction. 
    
    We have proved $\ell_p\cap P=\{p\}$ for every $p\in P$. In particular, distinct points of $P$ give distinct lines,
and the pairs $\{(p,\ell_p)\}_{p\in P}$ form an induced matching. Hence $\IM(2,q)\ \ge\ |P|$. It remains to estimate the size of $A$. Recall that $A \subset [q / 10]$. For each $a \in A$ and $y \in [M]$ with the same parity, we have $(a + y^2) / 2 \in [N]$, so the point $((a + y^2) / 2, y)$ is an element of $P$. Therefore, we obtain
    $$\abs{P} \geq (N - 1) / 2 \cdot |A| = \Omega(q^{1/2} |A|)$$
    as desired.
\end{proof}
Theorem~\ref{thm:d=2} now follows directly from Lemma~\ref{thm:Ruzsa-square} and Proposition~\ref{lem:Ruzsa-lift}.

\medskip
\section{Lifting to higher dimensions}
\label{sec:prime-highdim}
Next, we show how Sz\H{o}nyi's Paley lifting can be generalized to higher dimensions, leading us to the proof of Theorem~\ref{thm:prime-high-dim}.

Fix an integer $d\ge 2$. Write
\[
  D_d := \{t^d : t\in \F_q\}
  \qquad\text{and}\qquad
  D_d^\times := D_d\setminus\{0\}.
\]
The baseline lower bound $\IM(d,q) \gg q^{d-1} \log q$ comes from the following more general construction.
\begin{proposition}\label{prop:dth-power-highd}
Assume $\mathrm{char}(\F_q)>d$.
Let $I\subset \F_q$ satisfy
\[
  (I-I)\cap D_d^\times=\emptyset.
\]
Then we have
\[
  \IM(d,q)\ \ge\ \frac{1}{(d-1)!}\,|I|\,(q-1)^{d-1}.
\]
\end{proposition}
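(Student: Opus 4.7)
The plan is to generalize the polynomial-plus-direction construction from Proposition~\ref{prop:paley-to-im}. Recall that in dimension $2$ we used $\phi(x,y) = x + y^2$ with direction $v = (-2y, 1)$, which produced the key identity $\phi(p+\lambda v) = \phi(p) + \lambda^2$; combined with $(I-I)\cap D_2^\times = \emptyset$, this forced the induced matching. The target in general dimension is a polynomial $\phi: \F_q^d \to \F_q$ together with, for each $p$, a direction $v_p$ satisfying
\[
\phi(p + \lambda v_p) = \phi(p) + \lambda^d \qquad \text{for all } \lambda \in \F_q.
\]
Once such a pair exists, setting $P := \phi^{-1}(I)$ immediately yields an induced matching of size $|P| = |I|\, q^{d-1}$, comfortably dominating the claimed bound.

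I would define $\phi(x, y_1, \ldots, y_{d-1}) := x + Q(y_1, \ldots, y_{d-1})$, where $Q(e_1, \ldots, e_{d-1})$ is the unique polynomial expressing the $d$-th power sum $\sum_{i=1}^{d-1} r_i^d$ in terms of the elementary symmetric polynomials $e_k$ of the $d-1$ virtual roots $r_1, \ldots, r_{d-1}$; the assumption $\mathrm{char}(\F_q) > d$ is used here to ensure the coefficients appearing via Newton's identities are nonzero in $\F_q$. For each $p = (x, y_1, \ldots, y_{d-1})$, I would set
\[
v_p := \bigl((-1)^{d-1} d\, y_{d-1},\; 1,\; y_1,\; y_2,\; \ldots,\; y_{d-2}\bigr).
\]
This recovers $v = (-2y, 1)$ when $d = 2$ (with $Q = y^2$) and $v = (3y_2, 1, y_1)$ when $d = 3$ (with $Q = y_1^3 - 3 y_1 y_2$).

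The key calculation is to verify $\phi(p+\lambda v_p) = \phi(p) + \lambda^d$. The guiding observation is that the coordinate update $y_k \mapsto y_k + \lambda y_{k-1}$ (with the convention $y_0 := 1$) is precisely how the elementary symmetric polynomials of $r_1, \ldots, r_{d-1}$ transform when a new root $\lambda$ is adjoined. Under this substitution $Q$ almost represents the $d$-th power sum of the enlarged $d$-root system, but its definition used the $(d-1)$-root convention $e_d = 0$, whereas the true new $e_d$ equals $\lambda y_{d-1}$. Applying Newton's identity $p_d = \sum_{j=1}^{d-1} (-1)^{j-1} e_j p_{d-j} + (-1)^{d-1} d\, e_d$ to both sides then yields the crisp identity
\[
Q(y_1 + \lambda,\; y_2 + \lambda y_1,\; \ldots,\; y_{d-1} + \lambda y_{d-2}) = Q(y_1, \ldots, y_{d-1}) + \lambda^d - (-1)^{d-1} d\, \lambda y_{d-1},
\]
and the first-coordinate shift $(-1)^{d-1} d\, y_{d-1}$ in $v_p$ is designed precisely to cancel the trailing correction $-(-1)^{d-1} d\, \lambda y_{d-1}$, leaving $\lambda^d$ as claimed.

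With the identity in hand, the conclusion is immediate. For any $p \in P$ and any $\lambda \neq 0$, $\phi(p + \lambda v_p) - \phi(p) = \lambda^d \in D_d^\times$, so the hypothesis on $I$ forces $p + \lambda v_p \notin P$; thus the line $\ell_p := \{p + \lambda v_p : \lambda \in \F_q\}$ meets $P$ only at $p$. Because the second coordinate of $v_p$ equals $1$, distinct base points produce distinct directions and hence distinct lines, so the family $(p, \ell_p)_{p \in P}$ forms an induced matching of size $|P| = |I|\, q^{d-1} \geq |I|\, (q-1)^{d-1}/(d-1)!$. The main obstacle is the Newton's-identity bookkeeping in the third paragraph: the sign tracking and the precise form of the $e_d$-correction need to be checked carefully, but once that is done the rest is routine.
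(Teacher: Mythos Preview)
Your approach is correct and genuinely different from the paper's. The paper works with the ``moment'' polynomial $\Phi(x_1,\dots,x_d)=x_1+x_2^2+\cdots+x_d^d$ and must recursively solve for the direction coordinates $v_r$, which forces restrictions on the base point (each $x_{r+1}$ must be chosen so that a certain expression lands in $D_r^\times$); this is exactly where the loss factor $1/(d-1)!$ and the $(q-1)^{d-1}$ arise. Your Newton's-identity construction, by contrast, produces a direction $v_p$ that works for \emph{every} $p\in\F_q^d$, so you actually prove the stronger bound $\IM(d,q)\ge |I|\,q^{d-1}$. The symmetric-function interpretation (adjoining a root $\lambda$ shifts $e_k\mapsto e_k+\lambda e_{k-1}$ and adds $\lambda^d$ to the power sum, up to an $e_d$-correction) is a cleaner mechanism than the paper's coefficient-matching recursion, and the resulting identity is a genuine polynomial identity over $\mathbb{Z}$, hence valid in any characteristic. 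Indeed, your construction does not really need $\mathrm{char}(\F_q)>d$; your stated reason (``to ensure the Newton coefficients are nonzero'') is not where the hypothesis enters, and in fact nothing breaks if $\mathrm{char}(\F_q)\mid d$ --- the correction term simply vanishes.

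Two small points of exposition. First, your claim that ``distinct base points produce distinct directions'' is false as stated (two points sharing $y_1,\dots,y_{d-1}$ have the same $v_p$), but this is harmless: once you know $\ell_p\cap P=\{p\}$, distinctness of the lines and the induced-matching property follow immediately, since $p'\in\ell_p$ with $p'\in P$ forces $p'=p$. Second, the passage from the ``virtual roots'' derivation to a polynomial identity over $\F_q$ deserves one sentence: the map $(r_1,\dots,r_{d-1})\mapsto(e_1,\dots,e_{d-1})$ is surjective over an algebraically closed field, so the identity holds for all $(y_1,\dots,y_{d-1})$, not just those arising from actual roots. With those clarifications your argument is complete and in fact sharper than the paper's.
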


\begin{proof}
Set
\[
  \Phi(x_1,\dots,x_d):=x_1+x_2^2+\cdots+x_d^d.
\]
Our goal is to construct a set of points $P\subset \F_q^d$ and, for each $p\in P$, a line $\ell_p$ such that $\ell_p\cap P=\{p\}$. The pairs $(p,\ell_p)$ would then form an induced matching in $\II_q^{(d)}$ of size $|P|$.

To do so, we will first build $(d-1)$-tuples $(x_2,\dots,x_d)\in(\F_q^\times)^{d-1}$ together with a direction vector
$v=(v_1,\dots,v_d)\in(\F_q^\times)^{d-1}\times\{1\}$
 (a vector $v$ which will be a function of $x_1,\ldots,x_{d}$) such that for all $x_1,\lambda \in \mathbb{F}_{q}$, we have
\begin{equation}\label{eq:Phi-shift}
  \Phi(x_1+\lambda v_1,\dots,x_d+\lambda v_d)=\Phi(x_1,\dots,x_d)+\lambda^d.
\end{equation}
Expanding by the binomial theorem, \eqref{eq:Phi-shift} is equivalent to requiring that all coefficients
of $\lambda^r$ for all $1\le r\le d-1$ vanish, while the $\lambda^d$ coefficient equals $1$ (the coefficient of $\lambda^d$ is $v_d^d=1$).

For $1\le r\le d-1$, the $\lambda^r$ coefficient equals
\[
  v_r^r\;+\;\sum_{i=r+1}^d \binom{i}{r}\,x_i^{\,i-r}v_i^r,
\]
so we want
\begin{equation}\label{eq:vr-rec}
  v_r^r = -\sum_{i=r+1}^d \binom{i}{r}\,x_i^{\,i-r} v_i^r.
\end{equation}

We choose the variables in decreasing order $r=d-1,d-2,\dots,2$, and handle $r=1$ at the end. When $r=d-1$, the sum in \eqref{eq:vr-rec} contains only the term $i=d$. This gives
$$v_{d-1}^{\,d-1} = -{d \choose d-1} x_d^{\,1}v_d^{\,d-1} = -d x_d,$$
since $v_{d}=1$. This provides the base case of the recursion.

For $2\le r\le d-2$, now let us assume that the variables $x_{r+2},\dots,x_d$ and $v_{r+1},\dots,v_d$ have already been chosen, and are nonzero. Then in \eqref{eq:vr-rec} the only occurrence of the new variable $x_{r+1}$ comes from the term with $i=r+1$. For the reader's convenience, we single out this term
\[
-\sum_{i=r+1}^d \binom{i}{r}\,x_i^{\,i-r} v_i^r
= -\binom{r+1}{r}\,x_{r+1}\,v_{r+1}^r \;-\; \sum_{i=r+2}^d \binom{i}{r}\,x_i^{\,i-r} v_i^r.
\]
This allows us to emphasize that the right-hand side of \eqref{eq:vr-rec} is an affine-linear function of $x_{r+1}$ of the form
\[
A\,x_{r+1}+B,\ \ \ \ \ \text{where}
\qquad
A:=-(r+1)v_{r+1}^r,\quad
B:=-\sum_{i=r+2}^d \binom{i}{r}\,x_i^{\,i-r} v_i^r.
\]
Note that $B$ is already determined by the previously chosen variables.
Moreover $A\neq 0$ because $v_{r+1}\neq 0$ and $\mathrm{char}(\F_q)>d\ge r+1$.
Therefore the map $x_{r+1}\mapsto A x_{r+1}+B$ is a bijection of $\F_q$, so as $x_{r+1}$ ranges over $\F_q$
the right-hand side of \eqref{eq:vr-rec} ranges over all of $\F_q$.

In particular, we may choose $x_{r+1}\in\F_q^{\times}$ so that the right-hand side lies in $D_{r}^{\times}$ (recall $D_{r}^{\times}=\left\{t^r:t\in\F_q^\times\right\}$),
and then pick $v_r\in\F_q^\times$ satisfying \eqref{eq:vr-rec}.
Note that
$$\abs{D_r^{\times}} = \frac{q-1}{\gcd(r,q-1)} \geq \frac{q - 1}{r},$$
giving at least $(q-1)/r$ valid choices of $x_{r+1}$ for which the right-hand side
of \eqref{eq:vr-rec} lies in $D_{r}^{\times}$. For each of these, we can then choose $v_r\in\F_q^\times$ satisfying \eqref{eq:vr-rec}. 

After choosing $v_2,\dots,v_d$, we choose $v_1\in\F_q$ so that the $\lambda^1$ coefficient vanishes;
this is always possible because the $r=1$ equation \eqref{eq:vr-rec} is linear in $v_1$.

We have thus obtained a set $S\subset(\F_q^\times)^{d-1}$ of admissible tuples $(x_2,\dots,x_d)$, and for each $(x_2,\dots,x_d)\in S$ we fixed one corresponding direction $v(x_2,\dots,x_d)$ satisfying \eqref{eq:Phi-shift}. Moreover, we have also shown along the way that
$$|S|\ \ge\ \prod_{r=1}^{d-1}\frac{q-1}{r} = \frac{(q-1)^{d-1}}{(d-1)!}.$$
We can define finally the point set
$$P = \left\{p=(x_1, x_2, \cdots, x_d) \in \mathbb{F}_{q}^{d}:\ (x_2, \cdots, x_t) \in S\ \ \text{and}\ \ \Phi(p) = x_1 + \sum_{i=2}^d x_i^i \in I\right\}.$$
By construction, we have
\[
  |P|=|I|\,|S|\ \ge\ \frac{1}{(d-1)!}\,|I|\,(q-1)^{d-1}.
\]
For $p = (x_1, x_2, \cdots, x_t) \in P$ arising from $(x_2,\dots,x_d)\in S$, define the affine line
\[
  \ell_p:=\{p+\lambda v(x_2,\dots,x_d):\ \lambda\in\F_q\}.
\]
By \eqref{eq:Phi-shift}, for every $\lambda\in\F_q$ we have
$\Phi(p+\lambda v)=\Phi(p)+\lambda^d$.
If $p'\in \ell_p\cap P$, then $\Phi(p'),\Phi(p)\in I$ and
\[
  \Phi(p')-\Phi(p)=\lambda^d\in D_d.
\]
By the hypothesis $(I-I)\cap D_d^\times=\emptyset$, this forces $\lambda=0$, hence $p'=p$.
Thus $\ell_p\cap P=\{p\}$ for every $p\in P$, and $\{(p,\ell_p)\}_{p\in P}$ is an induced matching of size $|P|$.
\end{proof}

\medskip

\begin{remark}
The condition from Proposition \ref{prop:dth-power-highd} simply means that $I$ is an independent set in the Cayley graph $G_{q,d}$ on the additive group of $\F_q$, generated by the nonzero $d$th powers.
Equivalently, this is the graph on vertex set $\F_q$ in which two distinct elements are adjacent if and only if their difference is a nonzero $d$th power. When $q$ is a prime power, this Cayley graph is highly symmetric and behaves in many respects like the usual Paley graph. 

When $q$ is a prime power, $G_{q,d}$ is a normal Cayley graph and its spectrum is controlled by classical
character-sum estimates; in particular, all nontrivial eigenvalues are $O_d(\sqrt q)$.
The Delsarte--Hoffman bound then gives
\[
  \alpha(G_{q,d})\ \ll_d\ \sqrt q,
\]
matching the familiar ``square-root barrier'' for Paley-type graphs.
Moreover, in many extension-field situations this is sharp up to constants; for example, when $q=q_0^2$ and
$D_d^\times$ contains $\F_{q_0}^\times$ (equivalently $\gcd(d,q-1)\mid(q_0+1)$), the subfield $\F_{q_0}\subset\F_q$
forms a clique of size $q_0$, and by scaling by an element outside $D_d^\times$ one obtains an independent set of
size $q_0$ as well (see, e.g., \cite{Yip2021}).

In the prime-field case, much less is known about the extremal behaviour of $G_{q,d}$.
First, if $\gcd(d,q-1)=1$ then $D_d^\times=\F_q^\times$, so the condition $(I-I)\cap D_d^\times=\emptyset$ forces $|I|=1$,
and Proposition~\ref{prop:dth-power-highd} yields only the trivial lower bound $\IM(d,q)\gg_d q^{d-1}$.
Otherwise $\gcd(d,q-1)>1$, so $D_d^\times$ is a proper multiplicative subgroup.
In the undirected situation ($-1\in D_d^\times$), one can convert cliques into independent sets by dilation:
if $C$ is a clique and $\xi\notin D_d^\times$, then $\xi C$ is an independent set (since
$\xi D_d^\times$ is a disjoint coset of $D_d^\times$).
Thus $\alpha(G_{q,d})\ge \omega(G_{q,d})$, and combining this with the standard Ramsey bound
$\max\{\alpha(G_{q,d}),\omega(G_{q,d})\}\gg \log q$ produces an independent set $I\subset\F_q$ of size $\gg\log q$.
Consequently Proposition~\ref{prop:dth-power-highd} gives the baseline estimate (for such primes~$q$)
\[
  \IM(d,q)\ \gg_d\ q^{d-1}\log q.
\]
Obtaining substantially larger sets $I$ in prime fields appears comparable in difficulty to the classical
problem of finding large independent sets in Paley graphs, and Proposition~\ref{prop:dth-power-highd} by itself does not lead to any substantial improvements for $\IM(d, q)$.

\end{remark}

On the other hand, Proposition~\ref{prop:dth-power-highd} naturally leads to the idea of a ``higher power analogue" of Lemma~\ref{lem:Ruzsa-lift}. We next establish such a result and prove Theorem~\ref{thm:prime-high-dim}.

We will mostly work with integers, reducing modulo $q$ only in the final step. All equations in the rest of this section, unless otherwise indicated, hold over $\Z$. 

We need several preliminary results.
\begin{lemma}[Waring's problem]
    \label{lem:waring}
    For each positive integer $k$, there exists a positive integer $G(k)$ and a threshold $T(k)$ such that for any non--negative integer $n \geq T(k)$, the equation in $G(k)$ variables
    $$x_1^k + \cdots + x_{G(k)}^k = n$$
    has a solution in non--negative integers.
\end{lemma}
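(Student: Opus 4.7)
This is the classical Waring problem, first resolved by Hilbert (1909); the cleanest modern proof uses the Hardy--Littlewood circle method and in fact yields an asymptotic formula for the number of representations. Fix $k$; for $n$ large, set $N := \lfloor n^{1/k} \rfloor$ and define the exponential sum $f(\alpha) := \sum_{0 \le x \le N} e(\alpha x^k)$, where $e(t) := e^{2\pi i t}$. The number of representations
\[
R_s(n) := \#\{(x_1,\ldots,x_s) \in \Z_{\ge 0}^s : x_1^k + \cdots + x_s^k = n\}
\]
is then given by $\int_0^1 f(\alpha)^s\, e(-n\alpha)\, d\alpha$, so it suffices to choose $s = G(k)$ large enough that this integral is strictly positive for every $n \ge T(k)$.

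The first step is to partition $[0,1]$ into major arcs $\mathfrak{M}$ (small neighborhoods of rationals $a/q$ with $q$ below a suitable threshold, of length controlled by a negative power of $N$) and complementary minor arcs $\mathfrak{m}$. On $\mathfrak{M}$, the standard approximation to $f(\alpha)$ in terms of complete Gauss-type sums and an archimedean oscillatory integral produces a main term
\[
\int_\mathfrak{M} f(\alpha)^s\, e(-n\alpha)\, d\alpha = \mathfrak{S}(n)\, J(n) + (\text{lower order}),
\]
where $J(n) \asymp n^{s/k - 1}$ is the singular integral and $\mathfrak{S}(n) = \prod_p \sigma_p(n)$ is the singular series of local densities. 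I would verify $\mathfrak{S}(n) \gg_k 1$ by checking solvability of $x_1^k + \cdots + x_s^k \equiv n \pmod{p^j}$ with the expected frequency: away from primes dividing $k$ this follows from Hensel's lemma, while at the finitely many remaining primes it is a finite check once $s$ is large enough in terms of $k$.

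The second step is the minor-arc bound: Weyl's inequality (together with Hua's refinement) gives $\sup_{\alpha \in \mathfrak{m}} |f(\alpha)| \ll N^{1 - 2^{1-k} + \varepsilon}$, so raising to the $s$-th power shows the minor-arc integral is $o(N^{s-k})$ once $s$ exceeds an explicit function of $k$ (classically $s \ge 2^k + 1$ via Hua's lemma suffices). Combining this with the major-arc main term yields $R_s(n) \gg_k n^{s/k - 1} > 0$ for all $n \ge T(k)$, proving the lemma. The main obstacle is calibrating the major/minor arc split and the parameter $s$ carefully enough that the minor-arc savings dominate the error terms produced by the major-arc expansion; none of this is conceptually difficult, but all of it is delicate, which is why this proof is classical rather than short. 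An entirely elementary alternative, avoiding analytic machinery, is Linnik's Schnirelmann-density argument applied to the set of sums of a bounded number of $k$-th powers---quantitatively much weaker but conceptually lighter, and entirely sufficient for the qualitative statement needed here.
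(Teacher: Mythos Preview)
Your sketch of the Hardy--Littlewood circle method proof is correct and is indeed the standard analytic route to Waring's problem; the alternative via Linnik--Schnirelmann density that you mention at the end is also a valid qualitative approach. However, the paper does not actually prove this lemma: it simply quotes it as a classical result, citing Vinogradov's work for the quantitative bound $G(k) = O(k \log k)$ and the Vaughan--Wooley survey for further progress. So there is no ``paper's own proof'' to compare against---the paper treats this as a black box, which is entirely appropriate given that the lemma is a century-old theorem and only its statement (not its proof method) is used downstream.
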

Classical results of Vinogradov \cite{VinogradovWaring} show that we can take $G(k) = O(k \log k)$, which has remained the best known upper bound to date, up to lower order terms. See for example the survey of Vaughan and Wooley \cite{Waring} for further progress.

The next lemma is the higher power analogue of Lemma~\ref{thm:Ruzsa-square}. It is established alongside Lemma~\ref{thm:Ruzsa-square} (with exponent $0.7330\cdots$) in Ruzsa's paper \cite{Ruzsa}, and we reproduce its short proof here for the reader's convenience.
\begin{lemma}
    \label{lem:$k$--th--power--difference--free}
    Let $N$ be a positive integer, and let $D_k(N)$ be the size of the largest $A \subset [N]$ such that $A - A$ avoids non--zero $k$--th powers. If $p$ is the smallest prime congruent to $1$ modulo $2k$, then we have
    $$D_k(N) = \Omega_k\left(N^{c_k}\right)$$
    where
    $$c_k = 1 - \frac{1}{k} + \frac{\log k}{k \log p}.$$
    In particular, if $2k + 1$ is a prime, then we have
    $$c_k = 1 - \frac{1}{k} + \frac{\log k}{k \log (2k + 1)} \geq 1 - \frac{2}{k \log k}.$$
\end{lemma}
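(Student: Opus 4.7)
The strategy I would follow is Ruzsa's original digit construction, carried out in base $p^{k}$ with a carefully chosen restriction on the low-order base-$p$ digit of each chunk. The key design ingredient is a small subset of $\F_p$ whose difference set avoids nonzero $k$-th powers modulo $p$.

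Since $p \equiv 1 \pmod{2k}$, the nonzero $k$-th powers $H^* \subset \F_p^*$ form a multiplicative subgroup of index $k$ containing $-1$; in particular the Cayley graph on $\F_p$ with connecting set $H^*$ is undirected and regular of degree $(p-1)/k$. The first step is to produce a subset $S \subset \F_p$ of size $|S|=k$ with $(S-S)\cap H^* = \emptyset$. In the extremal case $p = 2k+1$ (where $H^* = \{\pm 1\}$) one can take $S = \{0, 2, 4, \ldots, 2(k-1)\}$ and verify directly that $(S-S) \bmod p$ avoids $\pm 1$; for larger $p$ this is a small independent-set problem in a regular Cayley graph and can be handled by a coset argument or a short greedy step. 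Lifting $S$ to $D := \{d \in \{0,1,\ldots,p^k-1\} : d \bmod p \in S\}$ gives $|D| = k\,p^{k-1}$, and I would define
\[
  A_m := \Big\{\sum_{i=0}^{m-1} d_i\, p^{ki} : d_i \in D\Big\} \subset [p^{km}],
\]
so that $|A_m| = k^m\, p^{m(k-1)}$.

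Ruling out $k$-th-power differences in $A_m$ is the heart of the argument. Suppose for contradiction that distinct $a, a' \in A_m$ satisfy $a - a' = y^k$ for some integer $y \ne 0$. Let $i_0$ be the smallest index where $d_{i_0} \ne d_{i_0}'$, and let $s, s' \in S$ be their residues modulo $p$. A short $p$-adic calculation using the base-$p^k$ expansion gives $v_p(a-a') = k\,i_0 + v_p(d_{i_0}-d_{i_0}')$ with $0 \le v_p(d_{i_0}-d_{i_0}') < k$. Since $v_p(y^k)$ must be a multiple of $k$, the second summand must vanish, which forces $s \ne s'$. Dividing $a - a' = y^k$ through by $p^{k i_0}$ and reducing modulo $p$ then yields a nonzero element of $H^*$ congruent to $s - s'$, contradicting the choice of $S$.

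Finally, taking $N = p^{km}$, one checks $\log |A_m|/\log N = (k-1)/k + \log k/(k \log p) = c_k$, so $|A_m| = N^{c_k}$; for arbitrary $N$ the loss of at most a constant depending on $k$ gives the claimed $\Omega_k(N^{c_k})$ bound. The supplementary estimate when $p = 2k+1$ is prime follows by rewriting $c_k = 1 - \log(2 + 1/k)/(k \log(2k+1))$ and bounding $\log(2 + 1/k) \le \log 3 < 2$ together with $\log(2k+1) \ge \log k$, yielding $c_k \ge 1 - 2/(k\log k)$ for $k$ large. The main subtlety I anticipate is the construction of $S$ in the first step, specifically in verifying that $k$ independent vertices can be produced for every admissible $p$; once $S$ is in hand, the rest is routine $p$-adic bookkeeping.
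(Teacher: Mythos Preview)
Your proposal is correct and follows essentially the same argument as the paper: both construct a $k$-th-power-difference-free set $S\subset\F_p$ of size $k$, lift it to a digit construction in base $p$ (your base-$p^k$ formulation with $D=\{d<p^k:d\bmod p\in S\}$ is just a regrouping of the paper's base-$p$ condition ``$c_d\in A_p$ whenever $k\mid d$''), and compute the resulting exponent. Your $p$-adic verification that $A_m-A_m$ avoids $k$-th powers spells out what the paper dismisses as ``simple to check'', and your explicit $S=\{0,2,\ldots,2(k-1)\}$ for $p=2k+1$ is cleaner than the paper's greedy step, which is the only case actually used downstream; you are also right to flag the existence of $S$ for general $p$ as the one genuine subtlety.
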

\begin{proof}
    Let $Q$ be the set of nonzero $k$-th powers in  $\Z / p\Z$. As $p \equiv 1 \bmod{2k}$, we have $Q = -Q$ and $|Q| = \frac{p - 1}{k}$. By the greedy algorithm, we can sample $A_p = \{a_1, \cdots, a_k\}$ in $\Z / p\Z$ such that $a_{i + 1}$ does not lie in $(a_1 + Q) \cup \cdots \cup (a_i + Q)$. Hence, we have $|A_p| \geq k$ and $A_p - A_p$ contains no nonzero $k$-th powers in  $\Z / p\Z$.
    
    Let $L = \lfloor{\log_{p} N \rfloor}$. Let $A\subset [N]$ be the set of positive integers $$n=1 + \sum_{i=0}^{L - 1} c_i p^i$$
    where $c_i \in \{0, 1, \cdots, p - 1\}$, and $c_d\in A_p$ whenever $k\mid d$. It is simple to check that $A - A$ contains no nonzero $k$--th power. Setting $s = \lceil L / k \rceil$, we have
    $$|A| \geq p^{L - s} |A_p|^s \gg_k N \cdot \left(\frac{k}{p}\right)^{\log_p N / k} = N^{c_k}$$
    as desired.
\end{proof}
We now begin the proof of Theorem~\ref{thm:prime-high-dim}. Fix $k$ to be a positive integer such that $2k + 1$ is a prime. Set $\ell = k^2$. We define an index set
$$I := I_k = \{(\alpha, \beta, \gamma): \alpha \in [k - 1], \beta \in [0, \ell (k - \alpha)], \gamma \in [2G(\alpha)]\} \cup \{0\}.$$
Here $G(\cdot)$ is the number of variables in Lemma~\ref{lem:waring}, and we use the notation
$$[n] := \{1,2,\cdots, n\}, \quad [0, n] := \{0,1,2,\cdots, n\}.$$

Let $N$ be a positive integer and set $M = \lfloor N^{1 / \ell} \rfloor$.  We define a polynomial $\Phi_N \in \Z[x_{I}]$ by
$$\Phi_N(x_I) = x_0^k + \sum_{\alpha \in [k - 1]}\sum_{\beta \in [0, \ell (k - \alpha)]} M^{\beta} \left(\sum_{\gamma = 1}^{G(\alpha)} x_{\alpha, \beta, \gamma}^{\alpha} -  \sum_{\gamma = G(\alpha) + 1}^{2G(\alpha)} x_{\alpha, \beta, \gamma}^{\alpha}\right).$$
This polynomial satisfies the following lemma, which adapts \eqref{eq:Phi-shift} to the integer setting.
\begin{lemma}
    \label{lem:nice--line}
    For any $x_I \in [N]^I$, there exists some $y_I \in \Z^I$ such that, as a polynomial identity in $h$, we have
    $$\Phi_N(x_I + y_I h) = \Phi_N(x_I) + h^k,$$
    and furthermore, we have $\infnorm{y_I} = O_k(M)$.
\end{lemma}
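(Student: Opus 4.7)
The plan is to expand $\Phi_N(x_I + y_I h)$ as a polynomial in $h$ and inductively choose the coordinates of $y_I$ so that every coefficient of $h^r$ for $1 \leq r \leq k-1$ vanishes, while the coefficient of $h^k$ equals $1$. The constant term is $\Phi_N(x_I)$ automatically, and the $h^k$ coefficient comes only from $x_0^k$ and equals $y_0^k$, so I set $y_0 := 1$. For $1 \leq r \leq k-1$, expanding $(x+yh)^j$ via the binomial theorem shows that the coefficient of $h^r$ equals
$$\binom{k}{r}\, x_0^{k-r} + \sum_{\alpha=r}^{k-1} \binom{\alpha}{r} \sum_{\beta=0}^{\ell(k-\alpha)} M^\beta \left(\sum_{\gamma=1}^{G(\alpha)} x_{\alpha,\beta,\gamma}^{\alpha-r}\, y_{\alpha,\beta,\gamma}^{r} - \sum_{\gamma=G(\alpha)+1}^{2G(\alpha)} x_{\alpha,\beta,\gamma}^{\alpha-r}\, y_{\alpha,\beta,\gamma}^{r}\right),$$
where for $\alpha = r$ the factor $x_{\alpha,\beta,\gamma}^{\alpha-r}$ equals $1$.

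I will choose the $y$'s in the order $r = k-1, k-2, \ldots, 1$. At step $r$, assume inductively that $|y_{\alpha,\beta,\gamma}| = O_k(M)$ for every $\alpha > r$ and every valid $\beta,\gamma$, and let $S_r$ denote the portion of the $h^r$ coefficient that does \emph{not} involve the variables $y_{r,\beta,\gamma}$. Using $|x_{\alpha,\beta,\gamma}| \leq N \leq (M+1)^\ell = O_k(M^\ell)$ together with $M^\beta \leq M^{\ell(k-\alpha)}$ and the inductive bound, each summand contributing to $S_r$ is bounded by $O_k(M^{\ell(k-\alpha)+\ell(\alpha-r)+r}) = O_k(M^{\ell(k-r)+r})$, and hence $|S_r| = O_k(M^{\ell(k-r)+r})$. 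Writing $-S_r$ in base $M$ by first extracting $d_{\ell(k-r)} := \lfloor -S_r / M^{\ell(k-r)} \rfloor$ and unfolding the remainder in the standard base-$M$ expansion produces
$$-S_r = \sum_{\beta=0}^{\ell(k-r)} d_\beta\, M^\beta, \qquad \max_\beta |d_\beta| = O_k(M^r).$$

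Finally, to realize each digit $d_\beta$ using the free variables $y_{r,\beta,\gamma}$, I pick an integer $T \geq T(r) + \max_\beta |d_\beta| = O_k(M^r)$ so that both $T$ and $T + d_\beta$ exceed the Waring threshold of Lemma~\ref{lem:waring}. That lemma then lets us write $T + d_\beta = \sum_{\gamma=1}^{G(r)} y_{r,\beta,\gamma}^{r}$ and $T = \sum_{\gamma=G(r)+1}^{2G(r)} y_{r,\beta,\gamma}^{r}$ with $|y_{r,\beta,\gamma}| \leq (T+|d_\beta|)^{1/r} = O_k(M)$. Subtracting produces $d_\beta$ as the required alternating sum of $r$-th powers; multiplying by $M^\beta$, summing over $\beta$, and combining with the previously chosen $y_{\alpha,\cdot,\cdot}$ for $\alpha > r$ kills the $h^r$ coefficient. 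The resulting $y_I \in \Z^I$ satisfies the identity of the lemma with $\infnorm{y_I} = O_k(M)$.

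The main obstacle is the size bookkeeping across the induction: one has to verify that $|S_r| = O_k(M^{\ell(k-r)+r})$ is preserved from step to step, and that exactly $\ell(k-r)+1$ base-$M$ digits of size $O_k(M^r)$ suffice to encode $S_r$. Both reduce to the clean telescoping identity $\ell(k-\alpha) + \ell(\alpha - r) = \ell(k-r)$ in the exponents, which is precisely what makes the scheme close inductively regardless of the specific value of $\ell$ (in particular, the choice $\ell = k^2$ made for the overall construction poses no issue here).
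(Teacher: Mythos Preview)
Your proposal is correct and follows essentially the same approach as the paper's proof: set $y_0=1$, process the coefficients of $h^r$ for $r=k-1,\ldots,1$ in decreasing order, bound the already-determined part, decompose it in base $M$, and use Waring's theorem to realize each digit as an alternating sum of $r$-th powers. The only cosmetic differences are that the paper uses the shifts $\max(R_\beta,0)+T(\alpha)$ and $\max(-R_\beta,0)+T(\alpha)$ where you use a single large $T$ on both sides, and the paper tracks the slightly sharper bound $\|y_\alpha\|_\infty \ll_k M^{1/\alpha}$ through the induction whereas you maintain the cruder (but sufficient) $O_k(M)$.
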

\begin{proof}
    For each $\alpha \in \{1, \cdots, k - 1\}$. Let $I_{\alpha} = \{(\alpha, \beta, \gamma): (\alpha, \beta, \gamma) \in I\}$ and $y_{\alpha} = y_{I_{\alpha}}$. Note that the term in $\Phi_N(x_I + y_I h)$ with index $(\alpha, \beta, \gamma)$ only contribute to the coefficients of $h^{\alpha'}$ with $\alpha' \leq \alpha$. 
    
    We set $y_0 = 1$, and iteratively choose $y_{\alpha}$ in decreasing order of $\alpha = k - 1 , \cdots, 1$, such that the coefficient of $h^{\alpha}$ in $\Phi_N(x_I + y_I h) $ is zero. This coefficient is given by
    $$\binom{k}{\alpha} x_{0}^{k - \alpha} y_0^{\alpha} + \sum_{\epsilon = \alpha}^{k - 1} \sum_{\beta \in [0, \ell (k - \alpha)]} M^{\beta} \left(\sum_{\gamma = 1}^{G(\alpha)} \binom{\epsilon}{\alpha} x_{\epsilon, \beta, \gamma}^{\epsilon - \alpha} y_{\epsilon, \beta, \gamma}^{\alpha} -  \sum_{\gamma = G(\alpha) + 1}^{2G(\alpha)} \binom{\epsilon}{\alpha} x_{\epsilon, \beta, \gamma}^{\epsilon - \alpha} y_{\epsilon, \beta, \gamma}^{\alpha}\right).$$
    Isolating the term corresponding to $I_{\alpha}$, we need to ensure that
    $$\sum_{\beta \in [0, \ell (k - \alpha)]} M^{\beta} \left(\sum_{\gamma = 1}^{G(\alpha)} y_{\alpha, \beta, \gamma}^{\alpha} -  \sum_{\gamma = G(\alpha) + 1}^{2G(\alpha)} y_{\epsilon, \beta, \gamma}^{\alpha}\right) = R$$
    where
    $$R = - \binom{k}{\alpha} x_{0}^{k - \alpha} y_0^{\alpha} - \sum_{\epsilon = \alpha + 1}^{k - 1} \sum_{\beta \in [0, \ell (k - \alpha)]} M^{\beta} \left(\sum_{\gamma = 1}^{G(\alpha)} \binom{\epsilon}{\alpha} x_{\epsilon, \beta, \gamma}^{\epsilon - \alpha} y_{\epsilon, \beta, \gamma}^{\alpha} -  \sum_{\gamma = G(\alpha) + 1}^{2G(\alpha)} \binom{\epsilon}{\alpha} x_{\epsilon, \beta, \gamma}^{\epsilon - \alpha} y_{\epsilon, \beta, \gamma}^{\alpha}\right).$$
    Estimating the size of $R$ using the triangle inequality, we have
    $$\abs{R} \ll_k N^{k - \alpha} + \sum_{\epsilon > \alpha}^{k - 1} M^{\ell (k - \alpha)} \infnorm{y_{\epsilon}}^{\alpha} \ll_k  N^{k - \alpha} (1 + \max_{\epsilon > \alpha} \infnorm{y_{\epsilon}}^{\alpha}).$$
    Expressing $R$ in base $M$, we can write
    $$R = \sum_{\beta \in [0, \ell(k - \alpha)]} R_{\beta} M^{\beta}$$
    where we have 
    $$\abs{R_{\beta}} \leq \max\left(M, \frac{\abs{R}}{M^{\ell (k - \alpha)}}\right) \ll_k \max\left(M, \max_{\epsilon > \alpha} (1 + \infnorm{y_{\epsilon}}^{\alpha}) \right).$$
    Now Lemma~\ref{lem:waring} shows that there exists $\{y_{\alpha, \beta, \gamma}\}_{\gamma \in [2G(\alpha)]}$ that satisfy
    $$\sum_{\gamma = 1}^{G(\alpha)} y_{\alpha, \beta, \gamma}^{\alpha} = \max(R_{\beta}, 0) + T(\alpha)$$
    and 
    $$\sum_{\gamma = G(\alpha) + 1 }^{2G(\alpha)} y_{\alpha, \beta, \gamma}^{\alpha} = \max(-R_{\beta}, 0) + T(\alpha).$$
    So we have
    $$\sum_{\gamma = 1}^{G(\alpha)} y_{\alpha, \beta, \gamma}^{\alpha} -  \sum_{\gamma = G(\alpha) + 1}^{2G(\alpha)} y_{\epsilon, \beta, \gamma}^{\alpha} = R_{\beta}$$
    and therefore
    $$\sum_{\beta \in [0, \ell (k - \alpha)]} M^{\beta} \left(\sum_{\gamma = 1}^{G(\alpha)} y_{\alpha, \beta, \gamma}^{\alpha} -  \sum_{\gamma = G(\alpha) + 1}^{2G(\alpha)} y_{\epsilon, \beta, \gamma}^{\alpha}\right) = \sum_{\beta \in [0, \ell (k - \alpha)]} M^{\beta} R_{\beta} = R$$
    as desired. Furthermore, we have $\abs{y_{\alpha, \beta, \gamma}}^{\alpha} \leq (\abs{R_{\beta}} + T(\alpha))^{1 / \alpha}$, so we obtain
    $$\infnorm{y_{\alpha}} \leq \max_{\beta} (\abs{R_{\beta}} + T(\alpha))^{1 / \alpha} \ll_k \max\left(M^{1 / \alpha}, \max_{\epsilon > \alpha}(1 + \infnorm{y_{\epsilon}})\right).$$
    Telescoping, we obtain $\infnorm{y_{\alpha}} \ll_k M^{1 / \alpha}$ for each $\alpha \in [k - 1]$, and in particular $\infnorm{y_{I}} \ll_k M$, as desired.
\end{proof}
We are now ready to construct the induced point--line matching. Let $q$ be a large prime, and set $N = \lfloor{q / 4\rfloor}$. Let $A \subset [q^k]$ be the largest subset of $[q^k]$ such that $A - A$ avoids non--zero $k$--th powers. As before, set $I = I_k$. For a shift $s \in \mathbb{Z}$, we define the set
$$\Gamma_s = \{x_I \in [N]^{I}: \Phi_N(x_I) \in A + s\}.$$
We observe that for any $x_I \in [N]^{I}$, we have
$$\abs{\Phi_N(x)} \ll_k x_0^{k} + \max_{\alpha \in [k - 1]} M^{\ell (k - \alpha)} N^{\alpha} \ll_k N^k.$$
Hence, $\abs{\Gamma_s} = 0$ unless $s \ll_k q^k$ (recall $N \asymp q$), and we have
$$\sum_{s \in \Z} \abs{\Gamma_s} = |A| N^{|I|}.$$
Therefore, we can choose some $s \in \Z$ such that
$$\abs{\Gamma_s} \gg_k \frac{|A| N^{|I|}}{q^k}.$$
By Lemma~\ref{lem:nice--line}, for any $x_I \in [N]^I$, there exists some $y_I \in \Z^I$ such that 
$$\Phi_N(x_I + y_I h) = \Phi_N(x_I) + h^k$$
and $\infnorm{y_I} \leq C_k M$, where $C_k > 0$ is some constant depending only on $k$.
We now consider the subset of $[q] \times [q]^{I}$ defined by
$$P := \{(z, x_I) \in [q] \times [q]^I: z \in [N / C_k M], x_I \in \Gamma_s\}.$$
For each $p = (z, x_I)$ in $P$, define the line $\ell_p = \{(z + h, x_I + y_I h): h \in \Z\}$. We claim that the reduction of $P$ and $\{\ell_p\}_{p \in P}$ modulo $q$ is an induced point--line matching.

Suppose for the sake of contradiction that for some distinct points $p = (z, x_I)$ and $p' = (z', x_I')$ in $P$, we have $p' \in \ell_{p}$ when reduced modulo $q$. Let $h = z' - z$. Then we have $\abs{h} \leq N / C_k M$. For each coordinate $i \in I$, we have
$$x'_i \equiv x_i + hy_i \bmod{q}.$$
Furthermore, we have
$$\abs{x_i' - x_i -hy_i} \leq N + N + \frac{N}{C_k M} \cdot C_k M  = 3N < q.$$
So we must have $x_i' = x_i + hy_i$ over $\Z$. Therefore, we have $x_I' = x_I + hy_I$ over $\Z$. By definition, we obtain (over $\Z$)
$$\Phi_N(x_I') = \Phi_N(x_I) + h^k.$$
However, both $\Phi_N(x_I)$ and $\Phi_N(x_I')$ are elements of $A + s$, and $A - A$ avoids non--zero $k$--th powers. Therefore, we must have $h = 0$ and $p = p'$, contradiction.

We conclude that the reduction of $P$ and $\{\ell_p\}$ modulo $q$ is an induced point--line matching in $\F_q^{|I| + 1}$. Its size is given by
$$\abs{P} \gg_k \frac{N}{M} \cdot\frac{|A| N^{|I|}}{q^k} \gg_k q^{|I| + 1 - k(1 - c_k) - \ell^{-1}} = q^{|I| + 1 - k(1 - c_k) - k^{-2}}$$
where $c_k$ is the constant in Lemma~\ref{lem:$k$--th--power--difference--free}. Substituting the value of $c_k$ in that lemma, we conclude the following.
\begin{proposition}
    \label{prop:Ikd}
    Let $k$ be a positive integer such that $2k + 1$ is a prime, and let $d = \abs{I_k} + 1$. For prime field $\F_q$, we have
    $$\IM(d, q) \gg_k q^{d - k(1 - c_k) - k^{-2}} \geq q^{d - \frac{2}{\log k} - \frac{1}{k^2}}.$$
\end{proposition}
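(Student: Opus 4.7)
The plan is to package the construction developed throughout this section into the stated quantitative bound. Having chosen $k$ so that $2k+1$ is prime, Lemma~\ref{lem:$k$--th--power--difference--free} supplies a set $A \subset [q^k]$ of size $\gg_k q^{k c_k}$ whose difference set avoids nonzero $k$-th powers. With $\ell = k^2$, $N = \lfloor q/4 \rfloor$, and $M = \lfloor N^{1/\ell}\rfloor$, define $\Phi_N$ and the level sets $\Gamma_s := \{x_I \in [N]^I : \Phi_N(x_I) \in A + s\}$.

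Since $|\Phi_N(x_I)| \ll_k N^k$ on $[N]^I$, a pigeonhole across the $O_k(q^k)$ relevant shifts produces some $s$ with $|\Gamma_s| \gg_k |A|\, N^{|I|}/q^k$. Set
\[
P \;=\; \{(z, x_I) \in [q] \times [q]^I : z \in [\lfloor N/(C_k M)\rfloor],\; x_I \in \Gamma_s\},
\]
and for each $p = (z, x_I) \in P$, invoke Lemma~\ref{lem:nice--line} to obtain $y_I \in \Z^I$ with $\infnorm{y_I} \le C_k M$ satisfying $\Phi_N(x_I + h y_I) = \Phi_N(x_I) + h^k$ as a polynomial identity in $h$. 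The associated affine line is $\ell_p := \{(z+h,\, x_I + h y_I) : h \in \F_q\}$.

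The central verification is that the reduction of $\{(p, \ell_p)\}_{p \in P}$ modulo $q$ is an induced matching in $\II_q^{(|I|+1)}$. If distinct $p, p' \in P$ lie in a common line mod $q$ with offset $h := z'-z$, then the bound $|h| \le N/(C_k M)$ together with $\infnorm{y_I} \le C_k M$ ensures every congruence $x_i' \equiv x_i + h y_i \pmod q$ lifts to a genuine integer identity (the discrepancy is bounded by $3N < q$). Hence $\Phi_N(x_I') = \Phi_N(x_I) + h^k$ over $\Z$, and since both values lie in $A + s$ with $A - A$ avoiding nonzero $k$-th powers, we must have $h=0$, so $p=p'$.

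The size estimate is then a direct computation:
\[
|P| \;\gg_k\; \frac{N}{M} \cdot \frac{|A|\, N^{|I|}}{q^k} \;\gg_k\; q^{|I|+1 - k(1-c_k) - 1/\ell},
\]
and substituting $\ell = k^2$ along with the lower bound $c_k \ge 1 - 2/(k \log k)$ from Lemma~\ref{lem:$k$--th--power--difference--free} yields the two-sided estimate in the proposition. The one genuinely delicate point is the balancing of $M$: it must be large enough for Lemma~\ref{lem:nice--line} to supply direction vectors via Waring's theorem, but small enough that $(N/M)\cdot M < q$ guarantees the $\F_q$-to-$\Z$ lift of the line equations. The choice $\ell = k^2$ handles this tradeoff, at the cost of the subleading $1/k^2$ loss in the exponent.
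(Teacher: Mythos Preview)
The proposal is correct and follows the paper's own argument essentially step for step: the same pigeonhole on the shifts $s$, the same definition of $P$, the same use of Lemma~\ref{lem:nice--line} to produce bounded direction vectors, the same $3N<q$ lift from $\F_q$ to $\Z$, and the same final size calculation. Your closing remark about the role of $M$ in balancing the Waring input against the $\F_q$-to-$\Z$ lift is a helpful bit of exposition not made explicit in the paper.
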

 We have $\IM(d + 1, q) \geq q \IM(d, q)$ by \eqref{eq:IMlift}.
 Hence, Proposition~\ref{prop:Ikd} also holds when $d \geq \abs{I_k} + 1$. Thus for each dimension $d$, we have
$$\IM(d, q) \gg_d q^{d - \epsilon_d},\ \ \text{with}\ \epsilon_d := \frac{2}{\log k} + \frac{1}{k^2},$$
where $k$ is the greatest integer such that $2k + 1$ is prime and $d \geq \abs{I_k} + 1$.

By the construction of $I_k$ and Vinogradov's estimate for the Waring number $G(k)$, we have $\abs{I_k} = k^{O(1)}$. Bertrand's postulate shows that $k = d^{\Omega(1)}$ and thus $\epsilon_d \ll (\log d)^{-1}$. This completes the proof of Theorem~\ref{thm:prime-high-dim}.

\section{Lifting for prime powers: a warm-up}

We now shift the attention to base fields $\F_q$ where $q$ is a prime power. The goal of this section is to prove the subsequent Proposition~\ref{prop:ruzsa-lift-prime-power}, as well as Corollary~\ref{cor:prime-power}. Both of these results admit relatively short proofs, and serve as motivations for the remaining arguments. 

In previous sections, we used the isomorphism $\F_q \cong \Z / q\Z$ to reduce various constructions over $\Z$ to the desired constructions over $\F_q$. While this isomorphism no longer holds when $q = p^t$ is a prime power, we instead have
$$\F_q \cong \Z[X] / (p, \phi(X))$$
where $\phi$ is an irreducible polynomial in $\F_p[X]$ of degree $t$. One natural strategy is to do constructions over $\Z[X]$, and reduce them via this isomorphism to constructions over $\F_q$. This strategy leads to the following Proposition~\ref{prop:ruzsa-lift-prime-power}.
\begin{proposition}
\label{prop:ruzsa-lift-prime-power}
Let $t\ge 1$ be an \emph{odd} integer, let $p$ be an odd prime such that $p >100t$, and set $q=p^t$. Let $A$ be a square--difference--free subset of $[\lfloor p/(20t)\rfloor]$.
Then, 
\[
  \IM(2,q) \gg_t p^{\frac{3t-1}{4}}\,|A|^{\frac{t+1}{2}} = q^{\frac{3t-1}{4t}}\;|A|^{\frac{t+1}{2}}.
\]
In light of Lemma~\ref{thm:Ruzsa-square}, we thus have
$$\IM(2,q) \gg_t q^{\frac{3t-1}{4t}}\; p^{\frac{t+1}{2} \cdot 0.7334} \gg_t q^{1.1167}.$$
\end{proposition}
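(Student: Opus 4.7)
The plan is to generalize the Ruzsa--style lift from Proposition~\ref{lem:Ruzsa-lift} (the prime case) to prime powers by working with the polynomial model $\F_q \cong \F_p[X]/(\phi(X))$, where $\phi$ is monic irreducible of degree $t$. The key observation is that if $y \in \F_q$ is represented by an integer polynomial of degree at most $m := (t-1)/2$, then $y^2$ has degree at most $2m = t-1$, so $2x - y^2$ automatically sits in the canonical degree-$<t$ representation with no reduction modulo $\phi$. This lets us impose integer conditions on the coefficients of $2x-y^2$ and then apply the square-difference-free set $A \subset [\lfloor p/(20t) \rfloor]$ coordinate by coordinate.

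Concretely, I would set $M := \lfloor \sqrt{p/(2(t+1))} \rfloor$ and define
\[
    P := \left\{(x,y) \in \F_q^2 \,:\, y = \sum_{j=0}^{m} y_j X^j,\ y_j \in [M],\ (2x-y^2)_{2k} \in A \text{ for } 0 \le k \le m \right\},
\]
where $(2x-y^2)_i$ denotes the $i$-th coefficient of the unique degree-$<t$ representative of $2x - y^2$, lifted to $[0,p)$; the odd-indexed coefficients are unrestricted. To each $p = (x,y) \in P$ I assign the line $\ell_p := \{(x+sy, y+s) : s \in \F_q\}$, with direction $(y,1)$. Counting directly: there are $M^{m+1}$ choices of $y$, and for each $y$, specifying $(m+1)$ coefficients in $A$ together with $m$ unconstrained coefficients in $\F_p$ pins down $2x - y^2 \in \F_q$, hence $x$ uniquely (since $2$ is invertible). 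This gives $|P| = M^{(t+1)/2} |A|^{(t+1)/2} p^{(t-1)/2} \asymp_t p^{(3t-1)/4} |A|^{(t+1)/2}$, as claimed.

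The core step is verifying that $\{(p, \ell_p) : p \in P\}$ forms an induced matching. Suppose $(x',y') \in P \cap \ell_{(x,y)}$ with $(x',y') \ne (x,y)$, and set $s := y' - y \in \F_q$, which has a degree-$\le m$ lift with integer coefficients in $[-M, M]$. The algebraic identity $(2x-y^2) - (2x'-y'^2) = s^2$ holds in $\F_q$, and crucially $s^2$ has degree $\le t-1$, so the identity is coefficient-by-coefficient in $\F_p[X]_{<t}$ with no mod-$\phi$ reduction. For each even index $i = 2k$, the left-hand side has absolute value at most $p/(20t)$ (a difference of elements of $A$) and the right-hand side $(s^2)_{2k}$ has absolute value at most $(m+1)M^2 \le p/4$; under the hypothesis $p > 100t$ the mod-$p$ identity lifts to an honest $\mathbb{Z}$-identity. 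I would then iterate from the top even index downwards: at $i = t-1 = 2m$, $(s^2)_{t-1} = s_m^2$ is a perfect square expressed as $a - a'$ with $a, a' \in A$, so square-difference-freeness forces $s_m = 0$; at $i = t-3$, once $s_m = 0$, $(s^2)_{t-3}$ collapses to $s_{m-1}^2$, giving $s_{m-1} = 0$; continuing through all $m+1$ even indices yields $s = 0$, contradicting $(x,y) \ne (x',y')$.

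The main subtlety is the tight interplay of the three scales required for the lift from $\F_p$ to $\mathbb{Z}$ to succeed at every even coefficient: $A$ must sit in a short window ($|A| \le p/(20t)$), the $y$-coefficients must satisfy $M \asymp \sqrt{p/t}$ to keep $|(s^2)_i| \le p/4$, and the odd-indexed coefficients of $2x - y^2$ must be left completely free so that we recover the full $p^{(3t-1)/4}$ factor. The hypothesis $p > 100t$ is exactly what makes all three conditions simultaneously satisfiable. A secondary delicate point is the parity structure: the iterative cancellation only isolates the pure squares $s_{m-k}^2$ at the \emph{even} positions of $s^2$, which relies on $t$ odd so that the $t$ coefficient indices split as $(m+1)$ even and $m$ odd---the even case of $t$ would require a different degree restriction on $y$.
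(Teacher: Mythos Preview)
Your proposal is correct and follows essentially the same construction as the paper: the same polynomial model $\F_q\cong\F_p[X]/(\phi)$, the same degree restriction $\deg y\le (t-1)/2$ so that $y^2$ avoids reduction mod $\phi$, the same line direction $(y,1)$, and the same coefficient-by-coefficient comparison of $(2x-y^2)-(2x'-y'^2)=(y'-y)^2$ lifted from $\F_p$ to $\Z$. The only cosmetic difference is that the paper extracts the pure square at the \emph{lowest} nonzero coefficient of $h=y'-y$ (so that $(h^2)_{2i}=h_i^2$ in a single step), whereas you iterate from the top coefficient downward; both arguments are valid.
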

For $t=1$, note that this recovers Proposition~\ref{lem:Ruzsa-lift}. It is perhaps important to add that a similar “degree-barrier” construction can be carried out for even $t$ as well (taking $\deg f < t/2$ so that $\deg(f^2)<t$).
However, when $t$ is even the field size $q=p^t$ is a square, and the Hermitian unital construction already yields induced matchings of size $\asymp q^{3/2}$, which is substantially larger than what the Ruzsa-type lifting provides in this regime. For this reason we focus on odd $t$, where no unital-type construction is available.

\begin{proof}
Choose $\alpha\in \F_q$ such that $\F_q=\F_p(\alpha)$ and let $\phi\in\F_p[X]$ be the minimal monic polynomial of $\alpha$.
Then $\deg\phi=t$, and every element of $\F_q$ has a unique representative polynomial of degree $<t$ in $\F_p[X]$
via the identification $\F_q\cong \F_p[X]/(\phi)$.

Let $s=\frac{t+1}{2} \leq t$, $M = \left\lfloor \sqrt{\frac{p}{64s}}\right\rfloor$, and define $X(s,M)$ to be the set of integer polynomials
 \[
 f(X)=\sum_{i=0}^{s-1} f_i X^i \in \Z[X]
  \qquad\text{with}\qquad |f_i|\le M.
  \]
  Furthermore, let $Y_{even}(t,A)$ be the set of integer polynomials
  \[
  g(X)=\sum_{i=0}^{t-1} g_i X^i \in \Z[X]
  \]
  with $g_{2i} \in A$ for $i=0,1,\ldots,s-1$ and $g_{2i+1} \in \left\{0,1,\ldots,p-1\right\}$ for $i=0,1,\ldots,s-2$. In other words, the even coefficients of $g$ come from $A$, while the odd coefficients of $g$ are arbitrary elements of $\F_p$. For each pair $(f,g)\in X(s,M)\times Y_{even}(t,A)$, we can now define
  \[
  p_{f,g}:=\left( \frac{g(\alpha)+f(\alpha)^2}{2},f(\alpha) \right) \in \F_{q}^{2}.
  \]
  Note that $p_{f,g} \in \F_q^2$ indeed holds by design (and also because $p$ is odd). We also would like to emphasize that if $p_{f,g}=(x,y) \in \F_{q}^{2}$ then $f(\alpha)=y$ and $g(\alpha)=2x-y^2$. Finally, let 
  \[
  P := \left\{p_{f,g}:\ f \in X(s,M), g \in Y_{even}(t,A)\right\} \subset \F_{q}^{2}.
  \]
  This will be the point set defining our large induced matching in $\II_{q}^{(2)}$. Before, we define the set of lines, we take a moment to note that the map $X(s,M)\times Y_{even}(t,A) \to \F_{q}^2$ defined by $(f,g) \mapsto p_{f,q}$ is injective. Indeed, this is because $\deg f < s \leq t$, $\deg g < t$, and $M < p/2$, so the values of $f(\alpha)$ and $g(\alpha)$ uniquely recover $f$ and $g$. Hence
\begin{equation*} \label{sizeofP}
  |P|=|X(s,M)|\cdot |Y_{\mathrm{even}}(t,A)|
  =(2M+1)^s\cdot (p^{s-1}|A|^s) \gg_{t} p^{\frac{3t-1}{4}}\,|A|^{\frac{t+1}{2}}
   = q^{\frac{3t-1}{4t}}\;|A|^{\frac{t+1}{2}}.
\end{equation*}
For $p=(x,y)\in P$, let us now define the affine line
\[
  \ell_p:=\{(x,y)+\lambda(y,1):\ \lambda\in\F_q\}
  =\{(x+\lambda y,\ y+\lambda):\lambda\in\F_q\}.
\]
We claim that $\{(p,\ell_p):p\in P\}$ is an induced matching in the point--line incidence graph of $\F_q^2$. By the calculation from \eqref{sizeofP}, this will complete the proof of Proposition~\ref{prop:ruzsa-lift-prime-power}.

Like before, the key point is that for every $\lambda\in\F_q$ we have the identity
\begin{equation}\label{eq:square-shift-even}
  2(x+\lambda y)-(y+\lambda)^2 = (2x-y^2)-\lambda^2,
\end{equation}
so moving along $\ell_p$ changes $2x-y^2$ by a square. 

Fix $p=p_{f,g}\in P$ and suppose $p'=p_{f',g'}\in P$ lies on $\ell_p$.
Then $p'=(x+\lambda y,\,y+\lambda)$ for some $\lambda\in\F_q$, and by \eqref{eq:square-shift-even} we get
\[
  g(\alpha)-g'(\alpha) = (2x-y^2)-(2x'-y'^2)=\lambda^2.
\]
Also $y'=y+\lambda$ implies
\[
  \lambda = y'-y = f'(\alpha)-f(\alpha)=h(\alpha),
  \qquad\text{where }h:=f'-f.
\]
Thus
\begin{equation}\label{eq:eval-identity-even}
  (g-g')(\alpha)=h(\alpha)^2.
\end{equation}

Because $\deg h < s$ and $t=2s-1$, we have $\deg(h^2)\le 2(s-1)<t$.
Since evaluation at $\alpha$ is injective on degree-$<t$ polynomials over $\F_p$,
\eqref{eq:eval-identity-even} implies the polynomial identity in $\F_p[X]$:
\begin{equation}\label{eq:poly-identity-even}
  g(X)-g'(X)\equiv h(X)^2 \pmod p.
\end{equation}

Now let $i\ge 0$ be the smallest index with the $X^i$ coefficient of $h$ nonzero in $\F_p$.
Write that coefficient as $d_i\in\{-2M,\dots,2M\}\subset\Z$ (using $2M<p/2$ so representatives are unambiguous).
Then the coefficient of $X^{2i}$ in $h^2$ is $d_i^2$, and it is the first nonzero coefficient of $h^2$.

Consequently, the first nonzero coefficient of $g-g'$ must also occur in degree $2i$.
In particular, $g_{2j}=g'_{2j}$ for all $j<i$, and at degree $2i$ we have
\[
  g_{2i}-g'_{2i}\equiv d_i^2 \pmod p.
\]
But $g_{2i},g'_{2i}\in A\subset[\,p/(20t)\,]$, so $g_{2i}-g'_{2i}\in A-A\subset(-p/2,p/2)$ as an integer.
Also $|d_i|\le 2M$ and $4M^2\le p/(16s)<p/2$, hence $d_i^2\in(-p/2,p/2)$.
Therefore the congruence mod $p$ lifts to the integer equality
\[
  g_{2i}-g'_{2i}=d_i^2\in\Z.
\]
If $g\neq g'$, then $g_{2i}-g'_{2i}\neq 0$ and we have produced a nonzero perfect square in $A-A$,
contradicting the square--difference--free hypothesis on $A$.
Hence $g=g'$. With $g=g'$, \eqref{eq:poly-identity-even} gives $h^2\equiv 0$, so $h\equiv 0$ and thus $f=f'$.
Therefore $p'=p$.

We have shown $\ell_p\cap P=\{p\}$ for every $p\in P$, so the pairs $\{(p,\ell_p)\}_{p\in P}$
form an induced matching of size $|P|$ in $\II_q^{(2)}$.
\end{proof}
We now turn our attention to Corollary~\ref{cor:prime-power}. We prove it using the isomorphism $\F_q \cong \F_p^t$ as $\F_p$--vector spaces.
\begin{proposition}
    \label{prop:prime-to-prime-power}
    Suppose $q=p^s$. Then for $d_0\ge 1$, we have $\IM(d_0 s,q)\ge \IM(d_0,p)^{s} \cdot q^{(s - 1)d_0}$.
\end{proposition}
\begin{proof}
    Recall that $\F_q\cong \F_p[T]/(f)$ for some irreducible $f$ of degree $s$.
    Each element $a$ of $\F_q$ can be uniquely written as $a = \sum_{i=0}^{s-1} c_iT^i +(f)$ with $c_i\in \F_p$. Let $\pi_i: \F_q \to \F_p$ map $a \in \F_q$ to $c_i \in \F_p$. Note that $\pi_i$ is a $\F_p$--linear map.

    Now let $M$ be an induced matching in $\F_p^{d_0}$ with point-set $P$, and each $p\in P$ having some direction $v_p\in \F_p^{d_0}\setminus\{0\}$ which spans a line $\ell_p$ with $P\cap \ell_p = \{p\}$. We then define an induced matching $M$ in $\F_q^{sd_0}$ as follows. For each $a = (a_j)_{j\in [sd_0]} \in \F_q^{sd_0}$ and each $i \in \{0, 1, \cdots, s - 1\}$, define $\phi_i(a) \in \F_p^{d_0}$ as
    $$\phi_i(a) := (\pi_i(a_j))_{j\in \{id_0+1,\dots,(i+1)d_0\}}.$$
    Our point-set shall be 
    $$P' = \left\{a \in \F_q^{sd_0} : \phi_i(a)\in P\text{ for all }i\in \{0,\dots,s-1\}\right\}.$$ Given a point $a \in P'$, let its corresponding line $\ell_a'$ have slope equal to
    $$v_a' = (v_{\phi_1(a)}, v_{\phi_2(a)}, \cdots, v_{\phi_s(a)}) \in \F_p^{sd_0}.$$
    Now suppose for the sake of contradiction that, for some $a \in P'$ and $\lambda \in \F_q \backslash \{0\}$, we have $a + \lambda v_a' \in P$. For each $i \in \{0, 1, \cdots, s - 1\}$ and $j\in \{id_0+1,\dots,(i+1)d_0\}$, we have
    $$\pi_i((a + \lambda v_a')_j) = \pi_i(a_j) + \pi_i(\lambda (v_{a}')_j).$$
    Since $(v_{a}')_j \in \F_p$, we have
    $$\pi_i((a + \lambda v_a)_j) = \pi_i(a) + (v_{a}')_j \pi_i(\lambda).$$
    Hence, we obtain
    $$\phi_i(a + \lambda v_a) = \phi_i(a) + \pi_i(\lambda) v_{\phi_i(a)}.$$
    Therefore, we have $\phi_i(a) + \pi_i(\lambda) v_{\phi_i(a)} \in P$. Since $P \cap \ell_{\phi_i(a)} = \{\phi_i(a)\}$, we obtain $\pi_i(\lambda) = 0$ for each $i \in \{0, 1, \cdots, s - 1\}$, so $\lambda = 0$, contradiction.

    We conclude that $(a, \ell_a')_{a \in P'}$ forms an induced point--line matching in $\F_q^{d_0 s}$. Its size is given by
    $$\abs{P'} = (|P| \cdot p^{d_0(s - 1)})^s = |P|^s \cdot q^{d_0(s - 1)}$$
    as desired.
\end{proof}
\begin{proof}[Proof of  Corollary~\ref{cor:prime-power}]
    If $s \geq \log d$, we may take $\epsilon_{d, s} = 1$ and apply the trivial bound $\IM(d, q) \geq q^{d - 1}$. So we may assume that $s < \log d$. Let $d_0 = \lfloor d / s \rfloor$. By \eqref{eq:IMlift}, we have
    $$\IM(d, q) \geq \IM(d_0 s, q) \cdot q^{d - d_0 s}.$$
    By Proposition~\ref{prop:prime-to-prime-power} and Theorem~\ref{thm:prime-high-dim}, we have
    $$\IM(d_0 s, q) \geq \IM(d_0,p)^{s} \cdot q^{(s - 1)d_0} \gg_d p^{sd_0 s- s \epsilon_{d_0}} q^{(s - 1)d_0}.$$
    Combining the estimates, we conclude that
    $$\IM(d, q) \gg_d q^{d - \epsilon_{d_0} s}.$$
    As $d_0 = \lfloor d / s \rfloor \gg d^{1/2}$, Corollary~\ref{cor:prime-power} holds with
    $$\epsilon_{d, s} := \epsilon_{d_0} s \ll \frac{s}{\log d}$$
    as desired.
\end{proof}

\section{Norm hypersurfaces over finite fields}
\label{sec:matching-highpower-II}

In this section we prove Theorem~\ref{thm:small-power} by constructing large induced matchings over $\F_q^d$ when $q = q_0^k$ is a prime power with $k \leq d$. We first describe the proof strategy. Recall that in Section~\ref{sec:prime-highdim}, the key ingredient is a polynomial $\Phi: \F_q^d \to \F_q$ such that for certain $x \in \F_q^d$, there exists some $y \in \F_q^d$ such that
$$\Phi(x + y h) = \Phi(x) + h^d.$$
Here, we instead consider the \emph{norm trace polynomial} $\Phi': \F_q^d \to \F_{q_0}$ given by
$$\Phi'(x) = \sum_{i = 1}^k \Norm(x_i).$$
Related norm--trace varieties appear in finite geometry and coding theory, e.g. \cite{Geil2003NormTrace}. We prove that for many $x \in \F_q^d$, there exists some $y \in \F_{q_0}^d$ such that the following relations hold for every $h \in \F_q$
$$\Phi'(x + y h) = \Phi'(x) + \Norm(h).$$
Hence, we can adapt the proof of Proposition~\ref{prop:dth-power-highd} in a much more efficient way.

\smallskip

Assume that $q=q_0^k$ with $k\ge 2$ and $q_0$ a prime (in fact, our proof actually also works when $q_0$ is a prime power).
Let $\sigma:\F_q\to\F_q$ be the Frobenius automorphism $\sigma(x)=x^{q_0}$.
Write
\[
  \Norm(x):=\prod_{j=0}^{k-1}\sigma^j(x)\in\F_{q_0}
\]
for the relative norm map $\F_q\to\F_{q_0}$.
For $\mu\in\F_q$ and $0\le r\le k$ set
\[
  e_r(\mu):=\sum_{0\le i_1<\cdots<i_r\le k-1}\sigma^{i_1}(\mu)\cdots\sigma^{i_r}(\mu)\in \F_{q_0},
\]
with the conventions $e_0(\mu)=1$ and $e_k(\mu)=\Norm(\mu)$.

\begin{lemma}\label{lem:norm-expansion-k}
For every $\mu\in\F_q$ and every $t\in\F_{q_0}$ one has
\[
  \Norm(1+\mu t)=\sum_{r=0}^k t^r e_r(\mu).
\]
\end{lemma}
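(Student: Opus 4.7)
The plan is to expand $\Norm(1+\mu t)=\prod_{j=0}^{k-1}\sigma^j(1+\mu t)$ as a product of Galois conjugates, and exploit the fact that the scalar $t$ lies in the prime subfield $\F_{q_0}$. Since $\sigma$ fixes $\F_{q_0}$ pointwise (by definition of the Frobenius, $\sigma(t)=t^{q_0}=t$ for every $t\in\F_{q_0}$) and $\sigma$ is a ring homomorphism, we have
$$\sigma^j(1+\mu t)=1+\sigma^j(\mu)\,t\qquad\text{for every }j\ge 0.$$
Hence, as elements of $\F_q$,
$$\Norm(1+\mu t)=\prod_{j=0}^{k-1}\bigl(1+\sigma^j(\mu)\,t\bigr).$$

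Next I would expand the product using the standard combinatorial identity
$$\prod_{j=0}^{k-1}(1+a_j t)=\sum_{S\subseteq\{0,1,\dots,k-1\}} t^{|S|}\prod_{j\in S} a_j,$$
applied with $a_j=\sigma^j(\mu)$. Grouping the right-hand side by the cardinality $r=|S|$ yields
$$\prod_{j=0}^{k-1}\bigl(1+\sigma^j(\mu)\,t\bigr)=\sum_{r=0}^{k} t^r \sum_{0\le i_1<\cdots<i_r\le k-1}\sigma^{i_1}(\mu)\cdots\sigma^{i_r}(\mu)=\sum_{r=0}^{k} t^r\,e_r(\mu),$$
which is exactly the claimed identity.

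There is essentially no obstacle here; the whole point is the observation that $t\in\F_{q_0}$ is Galois-fixed, which lets the Frobenius pass through the binomial $1+\mu t$ and leaves an ordinary polynomial in $t$ whose coefficients are elementary symmetric functions in the Galois conjugates of $\mu$. As a sanity check, one may also verify that the coefficients $e_r(\mu)$ genuinely land in $\F_{q_0}$, as asserted in the definition: $e_r(\mu)$ is a symmetric polynomial in $\mu,\sigma(\mu),\dots,\sigma^{k-1}(\mu)$, so it is fixed by $\sigma$ and therefore belongs to the fixed field $\F_{q_0}$. This makes both sides of the identity visibly elements of $\F_{q_0}$, consistent with the intended use of this lemma in the $\Phi'(x+yh)=\Phi'(x)+\Norm(h)$ calculation that will follow.
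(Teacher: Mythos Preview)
Your proof is correct and follows exactly the same approach as the paper: use that $t\in\F_{q_0}$ is fixed by $\sigma$ to write $\Norm(1+\mu t)=\prod_{j=0}^{k-1}(1+\sigma^j(\mu)t)$, then expand and collect powers of $t$ to identify the coefficients as the elementary symmetric functions $e_r(\mu)$. The only difference is that you spell out the expansion identity and add the (correct but unnecessary) sanity check that $e_r(\mu)\in\F_{q_0}$.
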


\begin{proof}
Since $t\in\F_{q_0}$ we have $\sigma(t)=t$, hence
\[
  \Norm(1+\mu t)=\prod_{j=0}^{k-1}\bigl(1+\sigma^j(\mu)\,t\bigr).
\]
Expanding the product and collecting terms according to the power of $t$ yields the stated identity,
with the coefficient of $t^r$ equal to the $r$th elementary symmetric polynomial in the conjugates $\sigma^j(\mu)$.
\end{proof}

Let $S \subset \F_{q_0}^{k - 1}$ consist of the $(k - 1)$-tuples $(t_1, \cdots, t_{k - 1})$ with pairwise distinct entries none of which being in $\{0, 1\}$. Let $t_k = 1$. We now define a key map $\varphi: S \to \F_{q_0}^k$. For each $(t_1, \cdots, t_{k - 1}) \in S$, let
$$A_i := \prod_{j\neq i}\frac{-t_j}{t_i-t_j}=\prod_{j\neq i}\frac{t_j}{t_j-t_i}\qquad (i=1,\dots,k)$$
and define 
$$\varphi(t_1, \cdots, t_{k - 1}) = (A_1, \cdots, A_k).$$
This definition is motivated by the following lemma.
\begin{lemma}\label{lem:lagrange-weights-k}
Let $(t_1,\dots,t_{k - 1})$ be an element of $S$, and let $(A_1, \cdots, A_k) = \varphi(t_1, \cdots, t_{k-1})$. Recall that $t_k = 1$. Then we have
\[
  \sum_{i=1}^k A_i =1
  \qquad\text{and}\qquad
  \sum_{i=1}^k A_i t_i^r =0\ \ \text{for every}\ \ 1\le r\le k-1.
\]
Moreover, we have
\[
  \sum_{i=1}^k A_i t_i^k = (-1)^{k+1}\prod_{i=1}^k t_i.
\]
\end{lemma}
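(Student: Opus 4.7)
The plan is to recognize the coefficients $A_i$ as values of a Lagrange interpolation basis, so the three identities become routine evaluations.

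First, I would observe that since $t_1,\dots,t_{k-1}$ are distinct and nonzero and $t_k=1$ is distinct from each $t_j$, all denominators $t_j-t_i$ are nonzero and so the weights $A_i$ are well defined. The key identification is that
\[
  A_i \;=\; L_i(0),\qquad\text{where}\qquad L_i(x):=\prod_{j\neq i}\frac{x-t_j}{t_i-t_j}
\]
is the $i$th Lagrange basis polynomial on the node set $\{t_1,\dots,t_k\}$. Indeed, substituting $x=0$ into $L_i$ yields $\prod_{j\neq i}\frac{-t_j}{t_i-t_j}=\prod_{j\neq i}\frac{t_j}{t_j-t_i}=A_i$.

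Next, for any polynomial $p\in\F_{q_0}[x]$ of degree at most $k-1$, Lagrange interpolation gives the identity $p(x)=\sum_{i=1}^k p(t_i)\,L_i(x)$, and in particular $p(0)=\sum_{i=1}^k p(t_i)\,A_i$. Applying this with $p(x)=1$ yields $\sum A_i=1$, which is the first claim. Applying it with $p(x)=x^r$ for $1\le r\le k-1$ yields $\sum A_i t_i^r=0$, which is the second claim.

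For the third identity, the polynomial $p(x)=x^k$ has degree $k$, so I cannot apply the interpolation formula directly. Instead I would consider the degree-$\le k-1$ Lagrange interpolant of $x^k$, namely $q(x):=\sum_{i=1}^k t_i^k\,L_i(x)$. The difference $x^k-q(x)$ has degree at most $k$, vanishes at the $k$ nodes $t_1,\dots,t_k$, and has leading coefficient $1$, so $x^k-q(x)=\prod_{i=1}^k(x-t_i)$. Evaluating at $x=0$ gives $-q(0)=\prod_{i=1}^k(-t_i)=(-1)^k\prod_i t_i$, hence $\sum A_i t_i^k = q(0)=(-1)^{k+1}\prod_{i=1}^k t_i$, as claimed.

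There isn't really a hard step here — the entire lemma is essentially the statement ``Lagrange interpolation at $k$ nodes, evaluated at $0$.'' The only minor care point is justifying the third identity despite $x^k$ being of degree exactly $k$, which is handled by the monic factorization argument above. The non-degeneracy hypotheses baked into the definition of $S$ (distinctness, and $t_i\neq 0$) are exactly what is needed to ensure all the $L_i$ and $A_i$ are well defined.
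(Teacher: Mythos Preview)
Your proof is correct and essentially identical to the paper's: both recognize $A_i=L_i(0)$ for the Lagrange basis polynomials, deduce the first two identities by interpolating $1$ and $x^r$, and handle the third by writing $x^k-\sum_i t_i^k L_i(x)=\prod_i(x-t_i)$ and evaluating at $0$.
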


\begin{proof}
Let $L_i(T)$ be the Lagrange basis polynomials for the nodes $t_1,\dots,t_k$,
\[
  L_i(T):=\prod_{j\neq i}\frac{T-t_j}{t_i-t_j},
\]
so that any polynomial $f$ of degree at most $k-1$ satisfies $f(T)=\sum_{i=1}^k f(t_i)L_i(T)$.
Evaluating at $T=0$ gives
\[
  f(0)=\sum_{i=1}^k f(t_i)L_i(0).
\]
Since $A_i=L_i(0)$, taking $f(T)\equiv 1$ gives $\sum_i A_i=1$, while taking $f(T)=T^r$ for $1\le r\le k-1$ gives $\sum_i A_i t_i^r=0$.

For the final identity, let $g(T)$ be the (unique) interpolation polynomial of degree at most $k-1$
such that $g(t_i)=t_i^k$ for all $i \in [k]$.
Then the polynomial $T^k-g(T)$ has roots $t_1,\dots,t_k$, hence
\[
  T^k-g(T)=\prod_{i=1}^k (T-t_i).
\]
Evaluating at $T=0$ gives $-g(0)=(-1)^k\prod_i t_i$, i.e.\ $g(0)=(-1)^{k+1}\prod_i t_i$.
Finally, by interpolation at $T=0$ we have $g(0)=\sum_{i=1}^k A_i t_i^k$, completing the proof.
\end{proof}

\begin{corollary}\label{cor:norm-cancel-k}
With the notation above, for every $\mu\in\F_q$ we have
\[
  \sum_{i=1}^k A_i\,\Norm(1+\mu t_i)=1 + c\,\Norm(\mu),
\]
where $c=\sum_{i=1}^k A_i t_i^k=(-1)^{k+1}\prod_{i=1}^k t_i\in \F_{q_0}^\times$.
\end{corollary}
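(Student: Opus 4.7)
The plan is to combine the two preceding lemmas in a single calculation: expand $\Norm(1+\mu t_i)$ as a polynomial in $t_i$ using Lemma~\ref{lem:norm-expansion-k}, then swap the order of summation and apply Lemma~\ref{lem:lagrange-weights-k} to collapse the inner sums. This is the natural move because the weights $A_i$ were engineered precisely to kill all intermediate power sums $\sum_i A_i t_i^r$ for $1\le r\le k-1$, leaving behind only the constant and degree-$k$ terms.

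Concretely, I would first note that every $t_i$ lies in $\F_{q_0}$, so Lemma~\ref{lem:norm-expansion-k} applies with $t=t_i$ and gives
\[
  \Norm(1+\mu t_i)=\sum_{r=0}^{k} t_i^{\,r}\, e_r(\mu).
\]
Multiplying by $A_i$, summing over $i$, and interchanging the order of summation produces
\[
  \sum_{i=1}^k A_i\,\Norm(1+\mu t_i) \;=\; \sum_{r=0}^{k} e_r(\mu)\,\Bigl(\sum_{i=1}^k A_i t_i^{\,r}\Bigr).
\]
Lemma~\ref{lem:lagrange-weights-k} now tells us that the inner sum equals $1$ when $r=0$, vanishes for $1\le r\le k-1$, and equals $c=(-1)^{k+1}\prod_i t_i$ when $r=k$. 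Since $e_0(\mu)=1$ and $e_k(\mu)=\Norm(\mu)$ by definition, only the $r=0$ and $r=k$ contributions survive and we get $1+c\,\Norm(\mu)$, as claimed.

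It remains only to check that $c\in\F_{q_0}^\times$. By hypothesis $(t_1,\dots,t_{k-1})\in S$, so none of the $t_i$ is $0$ (for $i<k$ this is because $S$ excludes $0$, and $t_k=1$); hence the product $\prod_{i=1}^k t_i$ is a nonzero element of $\F_{q_0}$, and so is $c$.

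There is really no main obstacle: the combinatorics has been pushed into Lemma~\ref{lem:lagrange-weights-k} (via Lagrange interpolation at the nodes $t_1,\dots,t_k$), and Lemma~\ref{lem:norm-expansion-k} provides exactly the polynomial expansion on which those identities act. The only mild point to be careful about is keeping straight that $t_i\in\F_{q_0}$ is what allows Lemma~\ref{lem:norm-expansion-k} to be applied termwise, and that the nonvanishing of $c$ uses the specific exclusion $t_i\neq 0$ built into $S$.
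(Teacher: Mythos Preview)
Your proof is correct and follows the same approach as the paper: expand each $\Norm(1+\mu t_i)$ via Lemma~\ref{lem:norm-expansion-k}, interchange summations, and use Lemma~\ref{lem:lagrange-weights-k} to collapse all but the $r=0$ and $r=k$ terms. Your explicit verification that $c\in\F_{q_0}^\times$ is a welcome addition that the paper leaves implicit.
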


\begin{proof}
Combine Lemma~\ref{lem:norm-expansion-k} with Lemma~\ref{lem:lagrange-weights-k}.
All intermediate coefficients $e_r(\mu)$ for $1\le r\le k-1$ vanish after weighting, leaving only the constant term $e_0(\mu)=1$
and the top term $e_k(\mu)=\Norm(\mu)$.
\end{proof}

\smallskip

We now show how Corollary~\ref{cor:norm-cancel-k} leads to an induced matching.
Define the \emph{$k$-fold norm hypersurface} as
\[
  \mathcal{H}_k:=\Bigl\{(x_1,\dots,x_k)\in\F_q^k:\ \sum_{i=1}^k \Norm(x_i)=1\Bigr\}.
\]

\begin{lemma}
\label{lem:unique-line-on-Hk}
Let $(t_1,\dots,t_{k - 1}) \in S$ and let $(A_1, \cdots, A_{k - 1}) = \varphi(t_1, \cdots, t_k)$. Let $(a_1,\dots,a_k)\in\F_q^k$ satisfy
\[
  \Norm(a_i)=A_i\qquad (i=1,\dots,k).
\]
Then the affine line
\[
  \ell_{a_1,\dots,a_k}:=\{(a_1(1+\mu t_1),\dots,a_k(1+\mu t_k)):\ \mu\in\F_q\}
\]
satisfies $\ell_{a_1,\dots,a_k}\cap \mathcal{H}_k=\{(a_1,\dots,a_k)\}$.
\end{lemma}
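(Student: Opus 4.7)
The plan is a direct computation using multiplicativity of the norm, combined with Corollary~\ref{cor:norm-cancel-k}. First I would fix a parameter $\mu \in \F_q$ and consider the point $(a_1(1+\mu t_1), \dots, a_k(1+\mu t_k))$ on $\ell_{a_1,\dots,a_k}$. Because $\Norm : \F_q^\times \to \F_{q_0}^\times$ is a multiplicative homomorphism (and sends $0$ to $0$), I can factor
\[
  \sum_{i=1}^{k} \Norm\bigl(a_i(1+\mu t_i)\bigr) \;=\; \sum_{i=1}^k \Norm(a_i)\,\Norm(1+\mu t_i) \;=\; \sum_{i=1}^k A_i\,\Norm(1+\mu t_i),
\]
using the hypothesis $\Norm(a_i)=A_i$.

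Next I would invoke Corollary~\ref{cor:norm-cancel-k}, which rewrites the right-hand side as $1 + c\,\Norm(\mu)$ for the fixed nonzero constant $c = (-1)^{k+1}\prod_i t_i \in \F_{q_0}^\times$. Thus the parametrized point lies on $\mathcal{H}_k$ if and only if $c\,\Norm(\mu) = 0$, i.e. $\Norm(\mu) = 0$. Since the norm map vanishes only at $0$, this forces $\mu = 0$, yielding the single intersection point $(a_1,\dots,a_k)$.

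Finally, I should sanity-check that the point $(a_1,\dots,a_k)$ itself lies in $\mathcal{H}_k$, which is not automatic from the hypotheses but is a direct consequence of the first identity in Lemma~\ref{lem:lagrange-weights-k}: $\sum_{i=1}^k \Norm(a_i) = \sum_{i=1}^k A_i = 1$. There is no real obstacle in this argument; the lemma is essentially a one-line consequence of the two preceding results, with the only substantive content already absorbed into Corollary~\ref{cor:norm-cancel-k}. The one bookkeeping point worth being careful about is ensuring $c \neq 0$, which holds because each $t_i \in \F_{q_0}^\times$ by the definition of $S$ (none of the entries equal $0$, and $t_k=1$).
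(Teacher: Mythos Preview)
Your proposal is correct and follows exactly the same route as the paper: verify $(a_1,\dots,a_k)\in\mathcal{H}_k$ via $\sum_i A_i=1$ from Lemma~\ref{lem:lagrange-weights-k}, use multiplicativity of $\Norm$ and Corollary~\ref{cor:norm-cancel-k} to reduce the intersection condition to $1+c\,\Norm(\mu)=1$, and conclude $\mu=0$ since $c\neq 0$ and $\Norm$ vanishes only at $0$. The bookkeeping point you flag about $c\neq 0$ is precisely the observation the paper uses as well.
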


\begin{proof}
Since $\sum_i A_i=1$ by Lemma~\ref{lem:lagrange-weights-k}, we have $(a_1,\dots,a_k)\in\mathcal{H}_k$.
For $\mu\in\F_q$, by multiplicativity of $\Norm$ and Corollary~\ref{cor:norm-cancel-k},
\[
  \sum_{i=1}^k \Norm\bigl(a_i(1+\mu t_i)\bigr)
  = \sum_{i=1}^k \Norm(a_i)\,\Norm(1+\mu t_i)
  = \sum_{i=1}^k A_i\,\Norm(1+\mu t_i)
  = 1 + c\,\Norm(\mu).
\]
Because $c\neq 0$ and $\Norm(\mu)=0$ if and only if $\mu=0$, the right-hand side equals $1$ precisely when $\mu=0$.
Thus $\ell_{a_1,\dots,a_k}$ meets $\mathcal{H}_k$ only at $\mu=0$, i.e.\ only at $(a_1,\dots,a_k)$.
\end{proof}

Therefore, the subset of $\mathcal{H}_k$ defined by
\[
  P := \{(a_1, \cdots, a_k) \in \F_q^k: (N(a_1), \cdots, N(a_k)) \in \varphi(S)\}
\]
together with the lines $\ell_{a_1, \cdots, a_k}$ forms an induced point--line matching. It remains to show that the matching is large, which is equivalent to lower bounding the size of $\varphi(S)$. 

We show that 

\begin{lemma}
    \label{lem:preimage-size}
    For each $A = (A_1, \cdots, A_k) \in \varphi(S)$, its preimage $\varphi^{-1}(A)$ has size at most $(k - 1)!$.
\end{lemma}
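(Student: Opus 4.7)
My plan is to encode $\varphi^{-1}(A)$ as the zero set of an explicit polynomial system of controlled degree and then bound its size using Bezout's theorem.

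The first step is to observe that Lemma~\ref{lem:lagrange-weights-k}, applied to the nodes $(t_1, \ldots, t_{k-1}, 1)$ with Lagrange weights $(A_1, \ldots, A_k)$, forces every preimage $(t_1, \ldots, t_{k-1}) \in \varphi^{-1}(A)$ to satisfy
\[
  \sum_{i=1}^{k-1} A_i\, t_i^r \;=\; -A_k \qquad (r = 1, 2, \ldots, k-1).
\]
This is a system of exactly $k-1$ polynomial equations in the $k-1$ unknowns $t_1, \ldots, t_{k-1}$, and the $r$-th equation has total degree exactly $r$. Applying Bezout's theorem in $\mathbb{A}^{k-1}_{\overline{\F_{q_0}}}$ then bounds the number of common zeros by $\prod_{r=1}^{k-1} r = (k-1)!$, which gives the desired inequality $|\varphi^{-1}(A)| \leq (k-1)!$.

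I expect the main subtlety will be verifying that Bezout gives a finite bound here, i.e., that the affine zero locus of the system is genuinely $0$-dimensional over $\overline{\F_{q_0}}$ (otherwise the product-of-degrees estimate need not apply at every $\F_{q_0}$-point). I would handle this by homogenizing in a new variable $T_0$ and ruling out positive-dimensional components meeting the hyperplane at infinity. Since $A \in \varphi(S)$ forces every $A_i$ to be nonzero (as the $t_i$ are nonzero and distinct from $1$), the leading forms $\sum_{i=1}^{k-1} A_i T_i^r = 0$ for $r = 1, \ldots, k-1$ admit no nontrivial solutions with pairwise distinct and nonzero coordinates (by the nondegeneracy of the $(k-1)\times(k-1)$ Vandermonde-type matrix $(T_i^r)_{r,i}$); the remaining degenerate configurations at infinity, where some coordinates coincide or vanish, can be dispensed with by a short case analysis based on which subset sums of the $A_i$'s can vanish.
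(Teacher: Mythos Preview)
Your setup and target bound match the paper's exactly: both place $\varphi^{-1}(A)$ inside the common zero locus $V$ of $f_r(t)=\sum_{i=1}^{k-1}A_i t_i^r+A_k$ for $r=1,\ldots,k-1$ and invoke the Bezout product $\prod_r r=(k-1)!$. The divergence is entirely in how the finiteness hypothesis for Bezout is justified, and there your plan has a real gap.

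You propose to show $V$ is globally $0$-dimensional by homogenizing and ruling out points at infinity, dispatching the degenerate cases (coinciding or vanishing $T_i$) by ``a short case analysis based on which subset sums of the $A_i$'s can vanish.'' But that case analysis is never carried out, and it cannot be as short as advertised: such subset sums \emph{do} vanish for many $A\in\varphi(S)$. Already for $k=3$ one has $A_1+A_2=0$ exactly when $t_1+t_2=1$, which holds for plenty of $(t_1,t_2)\in S$; then $[T_0{:}T_1{:}T_2]=[0{:}1{:}1]$ is a genuine point at infinity of the projectivized system, so the premise ``no solutions at infinity'' is simply false. For larger $k$ one faces a whole hierarchy of such subset-sum conditions, and when several hold simultaneously the affine variety $V$ can even acquire positive-dimensional components inside coincidence loci like $\{t_1=t_2\}$. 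Those extra components live outside $S$, so they do not threaten the lemma itself---but they do block any argument that the \emph{entire} zero locus is finite.

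The paper sidesteps all of this. Instead of analyzing $V$ globally, it shows that every point of $\varphi^{-1}(A)\subset S$ is a \emph{smooth} point of the scheme $V(f_1,\ldots,f_{k-1})$, hence lies in a $0$-dimensional irreducible component. The Jacobian determinant is $(k-1)!\,\prod_{j} A_j\,\prod_{i<j}(t_j-t_i)$, which is nonzero because $q_0>k$, each $A_j\neq 0$, and points of $S$ have pairwise distinct coordinates. The paper then applies the form of Bezout (due to Heintz; see also Tao's blog) that bounds only the union $V^0$ of the $0$-dimensional components by $\prod_r\deg f_r=(k-1)!$, irrespective of whatever higher-dimensional pieces $V$ may carry elsewhere. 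This local Jacobian check is both shorter and immune to the pathologies above; it is the missing ingredient in your outline.
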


In order to prove this lemma, we need to following ``overdetermined" form of Bez\'{o}ut's theorem. The theorem was first explicitly stated by Heintz in \cite{Heintz}, and a self-contained proof was published by Tao in his webblog \cite{Tao2011}.

\begin{theorem}[Bez\'out's theorem]
    \label{thm:bezout}
    Assume $d \geq m \geq 0$. Let $k$ be a field. Let $f_1, \cdots, f_d \in k[x_1, \cdots, x_m]$ be polynomials, and let $V$ be their common zero locus. Let $V^0$ be the union of the $0$-dimensional irreducible components of $V$. If $f_i$ has degree $D_i$, then we have
    $$|V^0| \leq D_1 \cdots D_d.$$
\end{theorem}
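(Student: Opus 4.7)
The plan is to reduce to the case where $k$ is algebraically closed (passing to $\overline{k}$ preserves isolated $0$-dimensional components), and then run a degree-tracking induction along the chain $X_0 = \mathbb{A}^m,\ X_1 = V(f_1),\ \dots,\ X_d = V$. The right quantity to track is the total geometric degree
\[
  \Delta(X) \;:=\; \sum_{j} \deg(\overline{C_j}),
\]
summed over the irreducible components $C_j$ of $X$, where $\overline{C_j}\subseteq \mathbb{P}^m$ is the projective closure and $\deg$ is the usual projective degree. Since each point of $V^0$ is itself a $0$-dimensional irreducible component of $V$ contributing $1$ to $\Delta(V)$, we have $|V^0|\le \Delta(V)$. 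Iterating a per-step bound $\Delta(X_i)\le D_i\,\Delta(X_{i-1})$ starting from $\Delta(X_0)=1$ then yields $\Delta(V)\le D_1\cdots D_d$, which is exactly the theorem.

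By summing over irreducible components of $X_{i-1}$, the per-step bound reduces to a one-hypersurface Bézout estimate: for an irreducible affine variety $C\subseteq\mathbb{A}^m$ and a polynomial $f$ of degree $D\ge 1$, one has $\Delta(C\cap V(f))\le D\,\deg(\overline{C})$. If $C\subseteq V(f)$ this is immediate since $C\cap V(f)=C$ and $D\ge 1$. Otherwise, homogenize $f$ to a form $\tilde f$ of degree $D$ in $k[x_0,\ldots,x_m]$; then $C\cap V(f)$ is the restriction of $\overline{C}\cap V(\tilde f)$ to the standard affine open $\{x_0\neq 0\}$, and the classical projective Bézout theorem for an irreducible variety cut by a hypersurface (see e.g.\ Fulton, \emph{Intersection Theory}, Example 8.4.6, or Hartshorne~I.7) says that $\overline{C}\cap V(\tilde f)$ is equidimensional of codimension $1$ in $\overline{C}$, with total component-degree at most $D\cdot \deg(\overline{C})$. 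Affine components of $C\cap V(f)$ correspond bijectively to the non-at-infinity components of $\overline{C}\cap V(\tilde f)$, each with the same projective closure, so restricting from projective to affine can only discard components and hence only decrease $\Delta$; the bound follows.

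Chaining the per-step inequalities gives $|V^0|\le \Delta(V)\le D_1\cdots D_d$, completing the proof. The argument is essentially a formal induction, so the main obstacle is really just importing the single-hypersurface Bézout bound from classical algebraic geometry; everything else is bookkeeping. The only mild subtleties are (i) handling components at infinity of $\overline{C}\cap V(\tilde f)$, which only cause the affine $\Delta$ to be smaller than the projective bound and thus preserve the inequality, and (ii) the degenerate cases where some $f_i$ is a nonzero constant (making $V$ empty) or vanishes identically (which can be dropped from the list without affecting the argument, assuming the stated $D_i$ are positive so that the bound $D_1\cdots D_d$ is meaningful).
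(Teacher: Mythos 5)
The paper does not actually prove this theorem; it is imported as a black box with citations to Heintz \cite{Heintz} and Tao's blog exposition \cite{Tao2011}. So there is no ``paper proof'' against which to compare your argument line by line. That said, your degree-tracking induction is essentially the standard route to the overdetermined Bézout inequality, and it is in the same spirit as Tao's self-contained treatment, so the comparison is favorable.

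Your argument is correct in outline and in substance. The reduction of the per-step bound to the one-hypersurface statement is sound: every irreducible component of $X_i = X_{i-1}\cap V(f_i)$ is a component of $C_j\cap V(f_i)$ for some component $C_j$ of $X_{i-1}$, so summing $\Delta(C_j\cap V(f_i))\le D_i\deg\overline{C_j}$ over $j$ gives $\Delta(X_i)\le D_i\,\Delta(X_{i-1})$, and chaining from $\Delta(\mathbb{A}^m)=1$ gives $\Delta(V)\le D_1\cdots D_d$, which dominates $|V^0|$ since each isolated point contributes $1$. The identification of affine components of $C\cap V(f)$ with the not-at-infinity components of $\overline{C}\cap V(\tilde f)$, with matching projective closures, is also correct (an irreducible component of $\overline{C}\cap V(\tilde f)$ meeting the affine chart has its affine trace dense, irreducible, and maximal, and vice versa). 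Two small things are worth tightening rather than leaving implicit. First, the ``equidimensional of codimension $1$'' invocation silently assumes $\dim C\ge 1$; when $C$ is a point the projective Bézout statement you cite does not apply, but the per-step inequality is then trivial ($\Delta(C\cap V(f))\le 1\le D$), so you should just say so. Second, the opening claim that base change to $\overline{k}$ ``preserves'' isolated components is directionally right but a little loose: a $0$-dimensional component over $k$ may split into several over $\overline{k}$, so what you actually need is that each isolated point of $V(k)$ sits inside a $0$-dimensional component of $V_{\overline{k}}$, whence $|V^0|$ is at most $\Delta(V_{\overline{k}})$; one sentence to that effect closes the gap. Neither issue affects the validity of the proof.
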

\begin{proof}[Proof of Lemma~\ref{lem:preimage-size}]

Recall that $A = (A_1, \cdots, A_k)$ lies in $\varphi(S)$. 

Let $f_1, \cdots, f_{k - 1} \in \F_{q_0}[x_1, \cdots, x_{k - 1}]$ be defined by
$$f_i(x_1, \cdots, x_{k - 1}) = A_1 x_1^{i} + \cdots + A_{k - 1}x_{k - 1}^i + A_k.$$
By Lemma~\ref{lem:lagrange-weights-k}, any $(t_1, \cdots, t_{k - 1}) \in \varphi^{-1}(A)$ lies in the common zero locus $V$ of the $f_i$'s. We now claim that it lies in a $0$-dimensional irreducible component. 

By the Jacobian criterion, it suffices to check that the Jacobian $J = \{\partial_j f_i\}_{i, j = 1}^{k - 1}$ is non-singular at $(t_1, \cdots, t_{k - 1})$. Note that
$$\partial_j f_i(t_1, \cdots, t_k) = A_j \cdot i \cdot t_{j}^{i - 1}.$$
Hence we have
$$\det J = (k - 1)! \cdot \prod_{j = 1}^{k - 1} A_j \prod_{1 \leq i < j \leq k} (t_j - t_i).$$
As $q_0 > k$, we have $k! \neq 0$. By the definition of $\varphi$, the $A_i$'s are nonzero. By the definition of $S$, the $t_i$'s are nonzero. We conclude that $\det J \neq 0$, thus any element in $\varphi^{-1}(A)$ lies in a $0$-dimensional irreducible component of $V$. By Theorem~\ref{thm:bezout}, we conclude that $\abs{\varphi^{-1}(A)} \leq (k - 1)!$.

\end{proof}

\begin{proof}[Proof of Theorem~\ref{thm:small-power}]
Set
$$
  P := \{(a_1, \cdots, a_k) \in \F_q^k: (N(a_1), \cdots, N(a_k)) \in \varphi(S)\}
$$
Recalling the definition of $S$, we have
$$\abs{S} = (q_0 - 2) (q_0 - 3) \cdots (q_0 - k) \gg_k q_0^{k - 1}.$$
By Lemma~\ref{lem:preimage-size}, we have
$$\abs{\varphi(S)} \gg_k \abs{S} \gg_k q_0^{k - 1}.$$
Since the norm map $\Norm:\F_q^\times\to \F_{q_0}^\times$ is surjective with fibres of size $(q-1)/(q_0-1)$ (see \cite[Ch.~2]{LidlNiederreiter1997}), we obtain
$$\abs{P} = \left(\frac{q - 1}{q_0 - 1}\right)^{k}\abs{\varphi(S)} \gg_k \frac{1}{q_0} q^{k} = q^{k - 1/k}.$$
For each $p=(a_1,\dots,a_k)\in P$ choose the associated line $\ell_p$ from Lemma~\ref{lem:unique-line-on-Hk}.
Since $\ell_p\cap\mathcal{H}_k=\{p\}$ and $P\subset\mathcal{H}_k$, we have $\ell_p\cap P=\{p\}$.
Thus $\{(p,\ell_p):p\in P\}$ is an induced matching in $\II_q^{(k)}$ of size $|P|\gg_k q^{k-1/k}$.

For $d>k$ we take the Cartesian product with $\F_q^{d-k}$:
replace each $p\in P$ by all points $(p,u)\in\F_q^k\times\F_q^{d-k}=\F_q^d$ and replace $\ell_p$ by $\ell_p\times\{u\}$.
This multiplies the matching size by $q^{d-k}$ and yields an induced matching in $\II_q^{(d)}$ of size
\[
  \gg_k q^{k-1/k}q^{d-k} = q^{d-1/k},
\]
as desired.
\end{proof}

\begin{remark}[The case $k=2$]
When $k=2$ and $q=q_0^2$, the hypersurface $\mathcal{H}_2=\{(x_1,x_2):\Norm(x_1)+\Norm(x_2)=1\}$ is a Hermitian curve,
and lines meeting $\mathcal{H}_2$ in a unique point are its tangents.
Thus, for $d=2$ the norm--interpolation construction is closely related to the classical Hermitian unital and yields
$\IM(2,q)\gg q^{3/2}$, matching \eqref{eq:vinh} up to constants in the square-field case.
\end{remark}

\section{Ruzsa lift for prime powers} \label{sec:Ruzsa-prime-powers}

In this section, we finally complete the proof of Theorem~\ref{thm:prime-power-high-dim}. We will prove the following result, which deals with the prime powers $q = p^t$ with $t > d$. The remaining prime powers are covered by Corollary~\ref{cor:prime-power} and Theorem~\ref{thm:small-power}. 
\begin{lemma}
\label{lem:prime-power-high-dim-final-piece}
Let $k$ be an odd positive integer such that $2k + 1$ is a prime. Then there exists some constant $f(k) = k^{O(k)}$ such that the following holds. Let $d, t$ be positive integers with $\min(d, t) > f(k)$, and let $q = q_0^t$ be a prime power. Then we have 
$$\IM(d, q) \geq  (c_k)^t t^{-t} \cdot q^{d - \delta_k}$$
where $c_k > 0$ depends on $k$ only and $\delta_k = \frac{3}{\log k} + \frac{3}{k}$.
\end{lemma}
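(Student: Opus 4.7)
The plan is to build a ``Ruzsa-style'' lift over $\F_q = \F_{q_0^t}$ by combining the Section~\ref{sec:prime-highdim} argument (the Waring/Ruzsa lift with the index set $I_k$) with the polynomial-representation trick of Proposition~\ref{prop:ruzsa-lift-prime-power} (doing integer arithmetic inside $\Z[X]/(q_0,\phi(X))$ under a degree restriction that prevents wraparound modulo $\phi$). Fix an irreducible $\phi\in\F_{q_0}[X]$ of degree $t$ to identify $\F_q\cong\F_{q_0}[X]/(\phi)$, set $s:=\lfloor t/k\rfloor$, and pick a constant $C_k$ large enough that the ``safe'' polynomial box $\mathcal{S}$ of integer polynomials of degree $<s$ with coefficients in $[-q_0/C_k,q_0/C_k]$ has the property that $h^k\in\Z[X]$ still has degree $<t$ and coefficients of absolute value $<q_0/2$ for every $h\in\mathcal{S}$. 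Polynomials in $\mathcal{S}$ then represent distinct elements of $\F_q$ without any reduction modulo $\phi$ or $q_0$, and this is the key enabler for transplanting the Section~\ref{sec:prime-highdim} argument.

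With this setup in hand, I would mirror the three moves of Section~\ref{sec:prime-highdim}. \emph{First}, I would adapt Lemma~\ref{lem:nice--line}: for each $x\in\F_q^{|I_k|}$, inductively solve for a direction $y\in\mathcal{S}^{|I_k|}$ with $\Phi(x+yh)=\Phi(x)+h^k$ in $\Z[X]/(\phi)$ for every $h\in\mathcal{S}$. Since every Waring coefficient-matching step now involves integer polynomials of degree $<t$ rather than scalar integers, the argument becomes a coefficientwise application of Lemma~\ref{lem:waring} in the $X$-expansion; the ``digit scale'' $M^\beta$ of Section~\ref{sec:prime-highdim} is replaced by an $X^\beta$-shift in the polynomial coefficients. \emph{Second}, I would take the base set $A\subset\F_q$ to be the set of polynomials $\sum_{i<t}a_iX^i$ whose coefficients $a_i$ all lie in a $k$-th-power-difference-free subset $A_{q_0}\subset[q_0]$ of density $\gg_k q_0^{c_k}$ supplied by Lemma~\ref{lem:$k$--th--power--difference--free}. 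If $a-a'\equiv h^k\pmod{\phi}$ for some nonzero $h\in\mathcal{S}$, the no-wraparound property of $\mathcal{S}$ lifts this congruence to a coefficientwise identity in $\Z[X]$; comparing the coefficient of $X^{ki_0}$, where $i_0$ is the lowest nonzero degree of $h$, produces a nonzero $k$-th power in $A_{q_0}-A_{q_0}$, contradicting the construction of $A_{q_0}$. \emph{Third}, I would form the point set $P:=\{(z,x)\in\mathcal{S}\times\F_q^{|I_k|}:\Phi(x)\in A+\sigma\}$ for a pigeonholed shift $\sigma\in\F_q$, with lines of direction $(1,y(x))$, and count: using $|\mathcal{S}|\gg(q_0/C_k)^s$, $|A|\gg_k q^{c_k}$ and the pigeonhole over $\sigma$, the resulting matching size will be $\gg_k(c_k')^t t^{-t}\,q^{d-\delta_k}$ when $d=|I_k|+1$; the case $d>|I_k|+1$ then follows by \eqref{eq:IMlift}. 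The exponent loss $\delta_k$ consists of the $\approx 1/\log k$ cost of the Ruzsa density, plus an additional $O(1/k)$ cost introduced by the degree-$s$ restriction on $\mathcal{S}$, which together are absorbed into the bound $\delta_k\le 3/\log k+3/k$.

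The main obstacle I expect is the first step: propagating the scalar identity $\Phi(x+yh)=\Phi(x)+h^k$ from $\Z$ to $\Z[X]/(\phi)$ for all $h\in\mathcal{S}$ simultaneously, as a polynomial identity in $h$. The argument in Section~\ref{sec:prime-highdim} is specifically tailored to identities between integers with a base-$M$ digit expansion, and transferring it to polynomial arithmetic requires choosing a joint scale in both $X$ and integer coefficients so that each Waring application at a fixed degree $\alpha$ really does reduce to $t$ independent applications of Lemma~\ref{lem:waring}, one per $X$-coefficient. Getting this decomposition cleanly is precisely where the ``$f(k)=k^{O(k)}$'' threshold on $\min(d,t)$ and the $(c_k)^t t^{-t}$ prefactor enter: they quantify the combinatorial overhead of running the Section~\ref{sec:prime-highdim} argument $t$ times in parallel across the coefficients of the polynomial representation, while still inheriting its main power-saving exponent.
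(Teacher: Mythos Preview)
Your overall strategy---polynomial representation plus a Ruzsa-style lift---is the right one, but two concrete steps do not go through as written.

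\textbf{The Waring step does not transfer ``coefficientwise''.} In the proof of Lemma~\ref{lem:nice--line}, the place where Waring is invoked is the representation of an \emph{integer} $R_\beta$ as $\sum_\gamma y_{\alpha,\beta,\gamma}^\alpha - \sum_\gamma y_{\alpha,\beta,\gamma}^\alpha$. In your polynomial-ring version, $R_\beta$ becomes a polynomial in $X$, and you need to write a \emph{polynomial} as a signed sum of $\alpha$-th powers of polynomials. Since $(c_0+c_1X+\cdots)^\alpha$ is not computed coefficientwise, applying integer Waring separately to each $X$-coefficient of $R_\beta$ does not produce polynomials $y_{\alpha,\beta,\gamma}\in\Z[X]$ whose $\alpha$-th powers have the required sum. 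The paper abandons Waring here entirely and instead uses the explicit identity of Proposition~\ref{prop:shifted-kth-power-sum},
\[
  \sum_{i=0}^{\alpha-1}(-1)^i\binom{\alpha-1}{i}\Bigl(f+\tfrac{\alpha-1}{2}-i\Bigr)^{\!\alpha}=\alpha!\,f,
\]
which is valid for arbitrary $f\in\Z[X]$. This is why the paper works with a \emph{different} index set $J_k$ (with $\gamma\in[0,\alpha-1]$ rather than $\gamma\in[2G(\alpha)]$), why the factors $(k!)^\alpha$ appear in $\Psi_{N,t}$, and why the hypothesis ``$k$ odd'' is present.

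\textbf{The restriction $z\in\mathcal S$ is far too costly.} With $s=\lfloor t/k\rfloor$ your box $\mathcal S$ has size $\asymp(q_0/C_k)^{s}\asymp q^{1/k}$, so the first coordinate contributes only $q^{1/k}$ out of a possible $q$. After the pigeonhole over $\sigma$ (which costs a further factor $q/|A|$), your point count is at best of order
\[
  |\mathcal S|\cdot|A|\cdot q^{|I_k|-1}\ \asymp\ q^{d-(1-1/k)-(1-c_k)},
\]
i.e.\ an exponent loss of roughly $1-1/k$, which for large $k$ is close to $1$ and certainly exceeds $\delta_k=3/\log k+3/k$. The paper decouples the degree constraints: the parameter coordinate $z$ is allowed almost full degree $<t-t'$ with $t'=O_k(t/k)$, while it is the \emph{direction} entries $y_j$ that are confined to a small-degree box (so that $hy_j$ still has degree $<t$). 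Making this work requires a genuinely two-scale digit expansion, with both an integer scale $M^\beta$ and a polynomial-degree scale $T^{r\beta'}$ built into $\Psi_{N,t}$; this is the reason for the extra index $\beta'$ in $J_k$ and for the shape of the $(c_k)^t t^{-t}$ prefactor.

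A smaller point: restricting \emph{all} $t$ coefficients of elements of $A$ to lie in $A_{q_0}$ is correct for the contradiction but wasteful; only the coefficients of $X^{ki}$ are ever tested by the ``lowest nonzero term of $h^k$'' argument, and the paper restricts only those.
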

\begin{proof}[Proof of Theorem~\ref{thm:prime-power-high-dim} assuming Lemma~\ref{lem:prime-power-high-dim-final-piece}]
We may assume that $d$ is sufficiently large. For a prime power $q = p^t$, we divide into three cases based on the exponent $t$.
\begin{enumerate}
    \item If $t < \sqrt{\log d}$, then Corollary~\ref{cor:prime-power} gives
    $$\IM(d, q) \gg_{d, t} q^{d - \epsilon_{d, t}}$$
    where $\epsilon_{d, t} \ll t / \log d \ll (\log d)^{-1/2} \ll (\log\log d)^{-1}$, as desired.

    \item If $t \in [\sqrt{\log d}, d]$, then Theorem~\ref{thm:small-power} gives
    $$\IM(d, q) \gg_{d} q^{d - 1 / t} \gg q^{d - (\log d)^{-1/2}}$$
    as desired.

    \item If $t > d$, then Lemma~\ref{lem:prime-power-high-dim-final-piece} gives
    $$\IM(d, q) \geq (c_k)^t t^{-t} \cdot q^{d - \delta_k}$$
    where $k$ is the maximum odd positive integer such that $(2k + 1)$ is a prime and $f(k) < d$. As $f(k) = k^{O(k)}$, we have $k \gg \sqrt{\log d}$, hence $\delta_k \ll (\log\log d)^{-1}$. Furthermore, we have
    $$(c_k)^t t^{-t} \ll_k t^{-2t} = q^{2\log t / \log p}.$$
    Hence, taking $\epsilon_d = \delta_k$, we conclude that
    $$\IM(d, q) \gg_d q^{d - \epsilon_d - 2 \log t / \log p}.$$
\end{enumerate}
Thus, we have proved Theorem~\ref{thm:prime-power-high-dim} in all cases.
\end{proof}
Our approach to Lemma~\ref{lem:prime-power-high-dim-final-piece} is an analogue to the proof of Theorem~\ref{thm:prime-high-dim} with $\Z$ replaced by the polynomial ring $\Z[T]$ (in this proof, we use $T$ as the variable for polynomials). We need the following analogue of Waring's theorem for polynomials (see \cite{CHWaring, Zhu}). For the reader's convenience, we include a proof in the Appendix.

\begin{proposition}\label{prop:shifted-kth-power-sum}
Let $k\ge 1$ and $f(x)\in \mathbb{Z}[T]$. then
\[
\sum_{i = 0}^{k - 1} (-1)^i \binom{k - 1}{i} \left(f + \frac{k-1}{2} - i\right)^k = k! \cdot f.
\]
\end{proposition}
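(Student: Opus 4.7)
My plan is to recognize the left-hand side as essentially a $(k-1)$-th order finite difference of the polynomial $y^k$ evaluated at the shifted variable $y = f + \tfrac{k-1}{2}$, and then use basic facts about finite differences to identify it as the linear polynomial $k!\, f$. Concretely, setting $y = f + \tfrac{k-1}{2}$ I would write the claim as the polynomial identity
\[
\sum_{i=0}^{k-1} (-1)^i \binom{k-1}{i} (y-i)^k \;=\; k!\,\Bigl(y - \tfrac{k-1}{2}\Bigr),
\]
which I will prove as an identity in $\mathbb{Q}[y]$ (and hence in $\mathbb{Z}[T]$ after substitution, since all coefficients will turn out to be integers).

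The key steps are as follows. First, I would observe that both sides are polynomials in $y$ of degree at most $k$, so it suffices to match coefficients. For the left-hand side, expand $(y-i)^k$ by the binomial theorem to get
\[
P(y) := \sum_{m=0}^{k} \binom{k}{m} y^{k-m} (-1)^m \left(\sum_{i=0}^{k-1} (-1)^i \binom{k-1}{i} i^{m}\right).
\]
The inner sum is, up to a sign $(-1)^{k-1}$, the $(k-1)$-th forward difference $\Delta^{k-1}[x^m]$ evaluated at $0$. The standard fact that $\Delta^{k-1}$ annihilates polynomials of degree $<k-1$ immediately kills every term with $m < k-1$, so $P(y)$ has degree at most $1$ in $y$. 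For $m = k-1$, using $\Delta^{k-1}[x^{k-1}](0) = (k-1)!$ pins down the coefficient of $y$ as $k \cdot (-1)^{k-1} \cdot (-1)^{k-1}(k-1)! = k!$, which matches the right-hand side.

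It remains to show that the constant term of $P(y)$ equals $-k! \cdot \tfrac{k-1}{2}$, or equivalently that $P(\tfrac{k-1}{2}) = 0$. This is the only step requiring a small trick, and I expect it to be the main obstacle since a brute expansion is messy. My plan is a symmetry argument: substitute $i \mapsto k-1-i$ in the sum
\[
P\!\left(\tfrac{k-1}{2}\right) = \sum_{i=0}^{k-1} (-1)^i \binom{k-1}{i} \left(\tfrac{k-1}{2} - i\right)^k,
\]
noting that $\tfrac{k-1}{2} - (k-1-i) = -(\tfrac{k-1}{2} - i)$ and $\binom{k-1}{k-1-i} = \binom{k-1}{i}$. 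Tracking the sign factors $(-1)^{k-1-i} \cdot (-1)^k = -(-1)^i$ (valid for every parity of $k$) shows the sum is equal to its own negation, hence vanishes in characteristic $\neq 2$, and in particular as an identity over $\mathbb{Q}$. Combining the three coefficient computations gives $P(y) = k!\,(y - \tfrac{k-1}{2})$, which after setting $y = f + \tfrac{k-1}{2}$ is exactly the claimed identity.
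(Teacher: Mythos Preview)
Your proof is correct and takes essentially the same approach as the paper: both arguments recognize the left-hand side as a $(k-1)$-th order finite difference (the paper phrases this via Boole's formula, Lemma~\ref{lem:boole}, rather than $\Delta^{k-1}$), and both handle the constant term by exactly the same $i\mapsto k-1-i$ symmetry argument. The only cosmetic difference is that the paper packages the finite-difference step by applying Boole's formula to the auxiliary polynomial $P(t)=(f(x)-t)^k-(c-t)^k$ with $c=\tfrac{k-1}{2}$, whereas you expand $(y-i)^k$ directly and kill the high-degree terms coefficient-by-coefficient.
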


We set $\ell = (k + 1)!$ and define an index set
$$J := J_k = \{(\alpha, \beta, \beta', \gamma): \alpha \in [k - 1], \beta, \beta' \in [0, \ell (k - \alpha)], \gamma \in [0, \alpha - 1]\} \cup \{0\}.$$
It is clear that $\abs{J_k} = k^{O(k)}$. We take $f(k) = \max(\ell^3, \abs{J_k} + 1)$. Then the condition $\min(d, t) > f(k)$ implies that
$$t > \ell^3, d > f(k) + 1.$$
Let $r = \lceil t / \ell \rceil$. As in the proof of Proposition~\ref{prop:ruzsa-lift-prime-power}, let $X(s, M)$ denote the polynomials in $\Z[T]$ with degree at most $(s - 1)$, whose coefficients have absolute value at most $M$. We note the relation that
\begin{equation}
    \label{eq:X-product}
 X(s, M) \cdot X(s', M') \subset X(s + s', (s + s') M M')   .
\end{equation}
Let $N$ be a positive integer and set $M = \lfloor N^{1 / \ell} \rfloor$.  We define the following polynomial
$$\Psi_{N,t}(x_J) := (k!)^{k} x_0^k + \sum_{\alpha \in [k - 1]}\sum_{\beta, \beta' \in [0, \ell (k - \alpha)]}(k!)^{\alpha} M^{\beta} T^{r \beta'} \left(\sum_{\gamma = 0}^{\alpha - 1} (-1)^{\gamma} \binom{\alpha - 1}{\gamma} x_{\alpha, \beta, \beta', \gamma}^{\alpha}\right).$$
This polynomial satisfies the following analog of Lemma~\ref{lem:nice--line} in the ring $\Z[T]$.
\begin{lemma}
    \label{lem:nice--line--poly}
    For any $x_J \in X(t, N)^J$, there exists some $y_J \in \Z[T]^J$ such that, as a polynomial identity in the single variable $h$, we have
    $$\Psi_{N,t}(x_J + y_J h) = \Psi_{N,t}(x_J) + (k!)^k \cdot h^k,$$
    and furthermore, we have $y_j \in X(t', N')$ for every $j \in J$, where $t' \leq 2k! \cdot (r + (k + 1)!)$, $N' = O_k(N^{1 / k})$.
\end{lemma}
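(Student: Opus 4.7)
The plan is to adapt the inductive scheme from Lemma~\ref{lem:nice--line} to the ring $\Z[T]$, with the scalar Waring-type step there replaced by the polynomial identity of Proposition~\ref{prop:shifted-kth-power-sum}. Set $y_0 = 1$ and construct $y_\alpha = (y_{\alpha, \beta, \beta', \gamma})$ for $\alpha = k - 1, k - 2, \ldots, 1$ in decreasing order, each time making the coefficient of $h^\alpha$ in $\Psi_{N, t}(x_J + y_J h)$ vanish. Since $y_0 = 1$, the $h^k$-coefficient is then automatically $(k!)^k$ and the $h^0$-coefficient is $\Psi_{N, t}(x_J)$, so the identity in $h$ follows.

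Expanding $\Psi_{N, t}(x_J + y_J h)$ by the binomial theorem, the coefficient of $h^\alpha$ (for $1 \leq \alpha \leq k - 1$) splits into three pieces: (i) $(k!)^k \binom{k}{\alpha} x_0^{k - \alpha}$ from the $x_0^k$-block; (ii) the unknown $(k!)^\alpha \sum_{\beta, \beta'} M^\beta T^{r \beta'} \sum_\gamma (-1)^\gamma \binom{\alpha - 1}{\gamma} y_{\alpha, \beta, \beta', \gamma}^\alpha$ from the $\alpha$-th power block; and (iii) contributions of the form $(k!)^{\alpha'} \binom{\alpha'}{\alpha} M^\beta T^{r\beta'} (-1)^\gamma \binom{\alpha' - 1}{\gamma} x_{\alpha', \beta, \beta', \gamma}^{\alpha' - \alpha} y_{\alpha', \beta, \beta', \gamma}^\alpha$ for $\alpha' > \alpha$, which are already determined by the inductive hypothesis. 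Cancelling, dividing by $(k!)^\alpha$, and using that $2^\alpha \alpha! \mid (k!)^{\alpha' - \alpha}$ for all $\alpha' > \alpha$ puts the equation in the form
\[
\sum_{\beta, \beta'} M^\beta T^{r \beta'} \sum_\gamma (-1)^\gamma \binom{\alpha - 1}{\gamma} y_{\alpha, \beta, \beta', \gamma}^\alpha = 2^\alpha \alpha! \, \tilde{R}_\alpha
\]
for an explicit $\tilde R_\alpha \in \Z[T]$.

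To invert, I decompose $\tilde R_\alpha = \sum_{\beta, \beta'} M^\beta T^{r \beta'} D_{\beta, \beta'}$ in the two-scale base $(M, T^r)$: collect the coefficients of $\tilde R_\alpha$ modulo $T^r$ into polynomials of degree $< r$, then signed-base-$M$-expand each integer coefficient to get digits $D_{\beta, \beta'}$ of degree $< r$. A careful size/degree estimate for $\tilde R_\alpha$, using \eqref{eq:X-product} together with the inductive hypothesis on $y_{\alpha'}$, shows that the index ranges $\beta, \beta' \in [0, \ell(k - \alpha)]$ are sufficient to host this decomposition. For each fixed $(\beta, \beta')$, apply the rescaled Waring identity $\sum_\gamma (-1)^\gamma \binom{\alpha - 1}{\gamma} (2 g + (\alpha - 1) - 2 \gamma)^\alpha = 2^\alpha \alpha! \, g$, obtained from Proposition~\ref{prop:shifted-kth-power-sum} via the substitution $f \mapsto 2 g$, with $g = D_{\beta, \beta'}$, and set
\[
y_{\alpha, \beta, \beta', \gamma} := 2 D_{\beta, \beta'} + (\alpha - 1) - 2 \gamma \in \Z[T].
\]
This yields $\deg y_{\alpha, \beta, \beta', \gamma} < r$ (comfortably within the bound $t' \leq 2k!(r + (k+1)!)$) and $\|y_{\alpha, \beta, \beta', \gamma}\|_\infty = O_k(\|D_{\beta, \beta'}\|_\infty)$, closing the induction.

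The principal obstacle is the bookkeeping for the size of $\tilde R_\alpha$: one must verify that the digits $D_{\beta, \beta'}$ can be kept of coefficient size $O_k(N^{1/k})$ so that the final $\|y_\alpha\|_\infty$ meets the target $O_k(N^{1/k})$. The linear scaling of the polynomial Waring identity (in contrast to the $\alpha$-th-root scaling of the scalar Waring theorem invoked in Lemma~\ref{lem:nice--line}) makes this delicate: the contributions to $\tilde R_\alpha$ from $x_{\alpha'}^{\alpha' - \alpha} y_{\alpha'}^\alpha$ grow multiplicatively with the inductively built $y_{\alpha'}$, and one must check that the base-$(M, T^r)$ budget $M^{\ell(k - \alpha) + 1} \cdot T^{r(\ell(k - \alpha) + 1)}$ dominates them. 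The generous choice $\ell = (k+1)!$ in the definition of $\Psi_{N, t}$ together with the hypothesis $t > \ell^3$ are designed precisely to absorb all such error terms, so the verification is tight but succeeds; all remaining steps are formal manipulations.
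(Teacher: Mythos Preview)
Your overall strategy matches the paper's: set $y_0=1$, inductively cancel the $h^\alpha$-coefficient for $\alpha=k-1,\ldots,1$, decompose the resulting remainder in the two-scale base $(M,T^r)$, and invoke Proposition~\ref{prop:shifted-kth-power-sum} digit by digit. Two steps in your sketch do not go through as written.

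First, the divisibility $2^\alpha\alpha!\mid (k!)^{\alpha'-\alpha}$ fails already for $k=3$, $\alpha=2$: here the only contribution to $R/(k!)^\alpha$ is $-(k!)^{k-\alpha}\binom{k}{\alpha}x_0=-18\,x_0$, which is not divisible by $2^2\cdot 2!=8$. Thus $\tilde R_\alpha\notin\Z[T]$ in general, so the digits $D_{\beta,\beta'}$ and hence $y_{\alpha,\beta,\beta',\gamma}=2D_{\beta,\beta'}+(\alpha-1)-2\gamma$ need not be integral. The paper sidesteps this by observing instead that $(k!)^{\alpha+1}\mid R$ (every term of $R$ carries at least that power of $k!$), decomposing $(k!)^{-\alpha-1}R$ in base $(M,T^r)$, and then applying the un-rescaled form of Proposition~\ref{prop:shifted-kth-power-sum} with the integer shift $\tfrac{k-1}{2}$, available precisely because of the standing hypothesis that $k$ is odd.

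Second, the assertion that every digit $D_{\beta,\beta'}$ has degree $<r$, and hence that $\deg y_{\alpha}<r$ throughout, is too optimistic. With the inductive hypothesis $\deg y_{\alpha'}<r$ for $\alpha'>\alpha$, the $\alpha'$-block contributes degree up to $r\ell(k-\alpha')+(\alpha'-\alpha)t+\alpha r$; since $t$ can be as large as $r\ell$, this already exceeds the budget $r(\ell(k-\alpha)+1)$ as soon as $\alpha\ge 2$. The paper does not attempt to keep every digit of degree $<r$: it lets the top block $\beta'=\ell(k-\alpha)$ absorb the degree overflow and tracks the accumulated degree through the recursion $t_\alpha=r+\ell k+\alpha\max_{\alpha'>\alpha}t_{\alpha'}$, which solves to $t'\le 2k!\,(r+(k+1)!)$. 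That recursion, not a uniform ``degree $<r$'' bound, is where the stated $t'$ actually comes from.
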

\begin{proof}
    For each $\alpha \in \{1, \cdots, k - 1\}$. Let $J_{\alpha} = \{(\alpha, \beta, \beta', \gamma):\beta,\beta'\in [0,\ell(k-\alpha)], \gamma\in [0,\alpha-1]\}$ and $y_{\alpha} = y_{J_{\alpha}}$. Note that the term in $\Psi_{N,t}(x_J + y_J h)$ with index $(\alpha, \beta, \beta', \gamma)$ only contribute to the coefficients of $h^{\alpha'}$ with $\alpha' \leq \alpha$. 
    
    We set $y_0 = 1$, and also define $t_k := 1, N_k := 1$. We then iteratively choose $y_{\alpha} \in \Z[T]^{J_{\alpha}}$ and integers $t_{\alpha}, N_{\alpha}$ in decreasing order of $\alpha = k - 1 , \cdots, 1$, such that the coefficient of $h^{\alpha}$ in $\Psi_{N,t}(x_J + y_J h) $ is zero, and $y_{j} \in X(t_{\alpha}, N_{\alpha})$ for each $j \in J_{\alpha}$.
    
    This coefficient is given by
    $$(k!)^{k} \binom{k}{\alpha} x_0^{k - \alpha} + \sum_{\alpha' = \alpha}^{k - 1}\sum_{\beta, \beta' \in [0, \ell (k - \alpha')]}(k!)^{\alpha'} M^{\beta} T^{r \beta'} \left(\sum_{\gamma = 0}^{\alpha' - 1} (-1)^{\gamma} \binom{\alpha' - 1}{\gamma}  \binom{\alpha'}{\alpha}x_{\alpha', \beta, \beta', \gamma}^{\alpha' - \alpha} y_{\alpha', \beta, \beta', \gamma}^{\alpha} \right).$$
    Isolating the term corresponding to $I_{\alpha}$, we need to ensure that
    $$\sum_{\beta, \beta' \in [0, \ell (k - \alpha)]} (k!)^{\alpha} M^{\beta} T^{r \beta'} \left(\sum_{\gamma = 0}^{\alpha - 1} \binom{\alpha - 1}{\gamma} (-1)^{\gamma}  y_{\alpha, \beta, \beta', \gamma}^{\alpha}\right) = R$$
    where
    $$R = - (k!)^{k} \binom{k}{\alpha} x_0^{k - \alpha} - \sum_{\alpha' = \alpha + 1}^{k - 1}\sum_{\beta, \beta' \in [0, \ell (k - \alpha')]}(k!)^{\alpha'} M^{\beta} T^{r \beta'} \left(\sum_{\gamma = 0}^{\alpha' - 1} (-1)^{\gamma} \binom{\alpha' - 1}{\gamma}  \binom{\alpha'}{\alpha}x_{\alpha', \beta, \beta', \gamma}^{\alpha' - \alpha} y_{\alpha', \beta, \beta', \gamma}^{\alpha} \right).$$
    We now recall our assumption that
    $$x_{\alpha', \beta, \beta', \gamma} \in X(t, N), y_{\alpha', \beta, \beta', \gamma} \in X(t_{\alpha'}, N_{\alpha'}).$$
    Applying \eqref{eq:X-product}, we have
    $$x_{\alpha', \beta, \beta', \gamma}^{\alpha' - \alpha} y_{\alpha', \beta, \beta', \gamma}^{\alpha} \in X\left((\alpha' - \alpha) t + \alpha t_{\alpha'}, O_k\left(N^{\alpha' - \alpha} N_{\alpha'}^{\alpha}\right)\right).$$
    Therefore, for each $\alpha' \in [\alpha, k - 1]$, the summand
    $$\sum_{\beta, \beta' \in [0, \ell (k - \alpha')]}(k!)^{\alpha'} M^{\beta} T^{r \beta'} \left(\sum_{\gamma = 0}^{\alpha' - 1} (-1)^{\gamma} \binom{\alpha' - 1}{\gamma}  \binom{\alpha'}{\alpha'}x_{\alpha', \beta, \beta', \gamma}^{\alpha' - \alpha'} y_{\alpha', \beta, \beta', \gamma}^{\alpha} \right)$$
    lies in 
    $$X\left((\alpha' - \alpha) t + \alpha t_{\alpha'} + r \ell (k - \alpha'), O_k\left(M^{\ell (k - \alpha')}N^{\alpha' - \alpha} N_{\alpha'}^{\alpha}\right)\right).$$
    Furthermore, we have
    $$- (k!)^{k} \binom{k}{\alpha} x_0^{k - \alpha} \in X(t (k - \alpha), N^{k - \alpha}).$$
    Recalling the assumption that $r = \lceil {t / \ell} \rceil \leq t / \ell + 1$ and $M = \lfloor N^{1 / \ell} \rfloor$, we conclude that
    $$R \in X\left((k - \alpha) t + \ell k + \alpha \max_{\alpha' > \alpha} t_{\alpha'}, O_k\left(N^{k - \alpha} \max_{\alpha' > \alpha} N_{\alpha'}^{\alpha}\right)\right).$$
    We also make the observation that $(k!)^{-\alpha - 1} R$ has integral coefficients.
    
    We now write the coefficients of  $(k!)^{-\alpha - 1} R$ in base $M$, and write each monomial $T^{i}$ as $T^{(i - r \beta') + r \beta'}$, where $\beta'$ is the largest element of $ [0, \ell (k - \alpha)]$ such that $i \geq r \beta'$. Thus, there exists $R_{\beta, \beta'} \in \Z[T]$ such that
    $$R = \sum_{\beta, \beta' \in [0, \ell (k - \alpha)]}(k!)^{\alpha + 1} M^{\beta} T^{r \beta'} R_{\beta, \beta'}$$
    and as $r \ell (k - \alpha) \geq t (k - \alpha)$ and $M^{\ell (k - \alpha)} \geq 2^{-k} N^{k - \alpha}$, we have
    $$R_{\beta, \beta'} \in X\left(\max(r, \ell k + \alpha \max_{\alpha' > \alpha} t_{\alpha'}), \max(M, O_k(\max_{\alpha' > \alpha} N_{\alpha'}^{\alpha}))\right).$$
    Finally, recalling our assumption that $k$ is odd, we can take
    $$y_{\alpha, \beta, \beta', \gamma} = R_{\beta, \beta'} + \frac{k - 1}{2} - \gamma$$
    so that by Proposition~\ref{prop:shifted-kth-power-sum}, we have
    $$k! R_{\beta, \beta'} = \sum_{\gamma = 0}^{\alpha' - 1} \binom{\alpha - 1}{\gamma} (-1)^{\gamma}  y_{\alpha, \beta, \beta', \gamma}^{\alpha}$$
    and thus we conclude that
    $$R = \sum_{\beta, \beta' \in [0, \ell (k - \alpha)]}(k!)^\alpha M^{\beta} T^{r \beta'} \sum_{\gamma = 0}^{\alpha' - 1} \binom{\alpha - 1}{\gamma} (-1)^{\gamma}  y_{\alpha, \beta, \beta', \gamma}^{\alpha}$$
    as desired. Furthermore, we have
    $$y_{\alpha, \beta, \beta', \gamma} \in X\left(\max(r, \ell k + \alpha \max_{\alpha' > \alpha} t_{\alpha'}), k + \max(M, O_k(\max_{\alpha' > \alpha} N_{\alpha'}^{\alpha}))\right).$$
    Thus, we can take
    $$\begin{cases}
        t_{\alpha} = r + \ell k + \alpha \max_{\alpha' > \alpha} t_{\alpha'} \\ 
        N_{\alpha} \ll_k M + \max_{\alpha' > \alpha} N_{\alpha'}^{\alpha}
    \end{cases}.$$
    Solving this recursion, we conclude that for each $\alpha$, we have
    $$t_{\alpha} \leq t' = 2k! \cdot (r + \ell k)$$
    and
    $$N_{\alpha} \ll_k M^{(k - 1)!} \ll_k N^{(k - 1)! / \ell}.$$
    Recalling our choice that $\ell = (k + 1)!$, we conclude the desired result.
\end{proof}
We are now ready to construct the induced point--line matching over $\IM(d, q)$. In light of \eqref{eq:IMlift}, and recall that we assume $d > |J_k| + 1$, it suffices to show that
$$\IM(|J_k| + 1, q) \geq (c_k)^t t^{-t} q^{|J_k| + 1 - 4 (\log k)^{-1}}.$$
Recall our choice of parameters:
$$\ell = (k + 1)!, \quad t > \ell^3, \quad r = \lceil t / \ell \rceil.$$

Let $N = \lfloor{p / 4\rfloor}$. Let $A \subset [N^k]$ be the largest subset of $[N^k]$ such that $A - A$ avoids non--zero $k$--th powers. Let $Y_k(r \ell k, N^k, A)$ denote the set of polynomials in $X(r \ell k, N^k)$, whose coefficient of $T^{ki}$ lies in $A$ for each non--negative integer $i$.  

For a shift $s \in \Z[T]$, we define the set
$$\Gamma_s = \{x_J \in X(t, N)^{J}: \Psi_{N,t}(x_J) \in Y_k(r \ell k, N^k, A) + s\}.$$
Recall the definition of $\Psi_{N,t}$ as
$$\Psi_{N,t}(x_J) = (k!)^{k} x_0^k + \sum_{\alpha \in [k - 1]}\sum_{\beta, \beta' \in [0, \ell (k - \alpha)]}(k!)^{\alpha} M^{\beta} T^{r \beta'} \left(\sum_{\gamma = 0}^{\alpha - 1} (-1)^{\gamma} \binom{\alpha - 1}{\gamma} x_{\alpha, \beta, \beta', \gamma}^{\alpha}\right)$$
where $M = \lfloor{N^{1 /\ell}\rfloor}$. By \eqref{eq:X-product}, we see that for any $x_J \in X(t, N)^{J}$, we have
$$\Psi_{N,t}(x_J) \in X\left(r \ell k, O_k(N^k)\right)$$
where the bound on the degree following from that fact that for any $\alpha \in [k - 1]$, we have
$$\ell (k - \alpha) r + \alpha t \leq r \ell k.$$
Hence, $\abs{\Gamma_s} = 0$ unless $s \in X(r \ell k, C_k N^k)$ for some constant $C_k > 0$ depending on $k$ only. On the other hand, we have
$$\sum_{s \in \Z[T]} \abs{\Gamma_s} = \abs{X(t, N)}^{\abs{J}} \abs{Y_k(r \ell k, N^k, A)}.$$
Therefore, we can choose some $s \in \Z[T]$ such that
$$\abs{\Gamma_s} \geq \frac{\abs{X(t, N)}^{\abs{J}}\abs{Y_k(r \ell k, N^k, A)}}{\abs{X(r \ell k, C_k N^k)}}.$$
By Lemma~\ref{lem:nice--line--poly}, for any $x_J \in X(t, N)^J$, there exists some $y_J \in \Z[T]^J$ such that (for every $h\in\Z[T]$)
$$\Psi_{N,t}(x_J + y_J h) = \Psi_{N,t}(x_J) + (k!)^{k}h^k$$
such that for each $j \in J$, we have
$$y_j \in X(t', N')$$ 
with $t' = 2k! \cdot (r + (k + 1)!)$, $N' = D_k N^{1 / k}$ and $D_k > 0$ is some constant depending only on $k$.

Now let $\alpha$ be a primitive element of the field extension $\F_q / \F_p$.
We consider the subset of $\F_q \times \F_q^J$ defined by\footnote{Here $(h, x_J)(\alpha)$ means evaluating every coordinate of $(h, x_J)$, being a polynomial in $T$, at $T = \alpha$.}
$$P := \{(z, x_J)(\alpha): z \in X(t - t', N / 2tN'), x_J \in \Gamma_s\}.$$
For each $p = (z, x_J)(\alpha)$ in $P$, define the line $\ell_p = \{(z(\alpha) + h, x_J(\alpha) + y_J(\alpha) h): h \in \F_q\}$. We claim that the reduction of $P$ and $\{\ell_p\}$ modulo $q$ is an induced point--line matching.

Suppose for the sake of contradiction that for some distinct points $p = (z, x_J)(\alpha)$ and $p' = (z', x_J')(\alpha)$ in $P$, we have $p' \in \ell_{p}$. Let $h = z' - z$. Then we have $h \in X(t - t', N / tN')$. For each coordinate $j \in J$, we have
$$x'_j(\alpha) = x_j(\alpha) + h y_j(\alpha).$$
Note that each of the polynomials $x_j'$, $x_j$, $hy_j$ lies in $X(t, N)$. As the minimal polynomial of $\alpha$ over $\F_p$ has degree $t$, we must have
$$x_j' - x_j - h y_j \equiv 0 \bmod{p}.$$
Furthermore, the coefficients of $x_j' - x_j - h y_j$ have absolute value at most $3N < p$. Hence, we must have
$$x_j' - x_j - h y_j  = 0$$
over $\Z[T]$.
Therefore, we have
$$\Psi_{N,t}(x_J') = \Psi_{N,t}(x_J + hy_J).$$
Expanding the right hand side using Lemma~\ref{lem:nice--line--poly}, we have
$$\Psi_{N,t}(x_J') = \Psi_{N,t}(x_J) + (k! h)^k.$$
On the other hand, $\Psi_{N,t}(x_J') - \Psi_{N,t}(x_J)$ lies in $Y_k(r \ell k, N^k, A) - Y_k(r \ell k, N^k, A)$. The non--zero term of $(k! h)^k$ with the least degree must be of the form $s^k T^{ki}$ for some non--negative integer $i \geq 0$. In any element of $Y_k(r \ell k, N^k, A) - Y_k(r \ell k, N^k, A)$, the coefficient of $T^{ki}$ must lie in $A - A$, so it cannot be a perfect $k$--th power, contradiction.

We conclude that
$$P = \{(z, x_J)(\alpha): z \in X(t - t', N / 2tN'), x_J \in \Gamma_s\}$$
together with the lines $\ell_p$ we defined forms an induced point--line matching in $\F_q^{|J| + 1}$. 

Different choices of $(z, x_J)$ give rise to distinct points of $P$, since $z$ and $x_J$ have degree less than $t$ and coefficients less than $p / 2$ in absolute value. Therefore, we have
$$\abs{P} \geq \abs{X(t - t', N / 2tN')} \abs{\Gamma_s}.$$
Substituting our estimate for $\abs{\Gamma_s}$, we have
$$\abs{P} \gg_k \abs{X(t - t', N / 2tN')} \cdot \frac{\abs{X(t, N)}^{\abs{J}}\abs{Y_k(r \ell k, N^k, A)}}{\abs{X(r \ell k, C_k N^k)}}.$$
It is clear that for any $K \geq 0$, we have $\max(K, 1)^t \leq \abs{X(t, K)} \leq (2K + 1)^t$. Hence we obtain
$$\abs{P} \geq c_k^{t} \cdot (N / 2tN')^{t - t'} \cdot \frac{N^{t\abs{J}}\abs{Y_k(r \ell k, N^k, A)}}{N^{r \ell k^2}}$$
where $c_k > 0$ depends only on $k$ and might be different for each appearance.

Furthermore, by definition we have 
$$\abs{Y_k(r \ell k, N^k, A)} \geq N^{k \cdot r \ell (k - 1)} \cdot \abs{A}^{r \ell}.$$
Hence, we obtain
$$\abs{P} \geq c_k^{t} \cdot (N / 2tN')^{t - t'} \cdot \frac{N^{t\abs{J}}\cdot N^{k \cdot r \ell (k - 1)} \cdot \abs{A}^{r \ell}}{N^{r \ell k^2}}.$$
Simplifying, we get
$$\abs{P} \geq c_k^{t} \cdot N^{-t'} (2tN')^{-t} \left(\frac{|A|}{N^k}\right)^{ r \ell} \cdot N^{t (\abs{J} + 1)}.$$
Again, recall our choice of parameters
$$\ell = (k + 1)!, \quad t > \ell^3, \quad r = \lceil t / \ell \rceil, \quad t' = 2k! \cdot (r + (k + 1)!), \quad N' = N^{1 / k}$$
and $|A| = \Omega_k(N^{1 - \frac{2}{k \log k}})$. We have
$$N^{t'}  \leq N^{2t / k},$$
$$(N')^{t} \leq N^{t / k},$$
$$\left(\frac{|A|}{N^k}\right)^{ r \ell} \leq N^{-2 r\ell / \log k} \leq N^{-3t / \log k}.$$
So we can finally conclude that
$$\abs{P} \geq (c_k)^t t^{-t} \cdot N^{t (|J| + 1 - \delta_k)}.$$
Recalling that $N = \lfloor p / 4 \rfloor$ and $q = p^t$, we obtain
$$\abs{P} \geq (c_k)^t t^{-t} \cdot q^{|J| + 1 - \delta_k}$$
as desired.

\section{New Nikodym sets and new minimal blocking sets}\label{sec:Nikodym-I}

In this section, we will first justify the simple correspondence between induced matchings in point--line incidence graphs and (weak) Nikodym sets, which was discussed in Section \ref{sec:intro}. In dimensions $d\ge 3$, this dictionary immediately turns any large induced matching in dimension $(d - 1)$ into a small Nikodym set by taking a
Cartesian product. In dimension $2$, the same dictionary only gives \emph{weak} Nikodym sets. In order to prove Theorem \ref{thm:nikodym-2d}, we will start instead with a high-dimensional matching constructed in Section~\ref{sec:prime-highdim}, and then develop a projection mechanism that will produce a Nikodym set in $\F_{q}^{2}$. 

Last but not least, we will use the new Nikodym sets we construct to get new minimal blocking sets, establishing Theorem~\ref{thm:blocking-2d-poly}. 

\subsection{Small Nikodym sets in dimension $d\ge 3$}

Recall that for $x\in\F_q^d$ and $v\in\F_q^d\setminus\{0\}$ we write
\[
  \ell(x,v):=\{x+\lambda v:\lambda\in\F_q\}
  \qquad\text{and}\qquad
  \ell(x,v)^\ast:=\ell(x,v)\setminus\{x\}.
\]
A set $N\subset \F_q^d$ is a \emph{Nikodym set} if for every $x\in\F_q^d$ there exists $v\neq 0$ with
$\ell(x,v)^\ast\subset N$.  A set $N\subset \F_q^d$ is a \emph{weak Nikodym set} if for every $x\notin N$ there exists
$v\neq 0$ with $\ell(x,v)^\ast\subset N$.

\begin{proof}[Proof of Proposition~\ref{prop:nikodym-basic}]
\emph{(1) Nikodym $\Rightarrow$ weak Nikodym, and the converse fails for $d=2$.}
The implication is immediate from the definitions.
To see that the converse can fail in the plane, consider
\[
  N:=(\F_q^\times)^2=\{(x,y)\in\F_q^2:\ x\neq 0,\ y\neq 0\}.
\]
If $p\in\F_q^2\setminus N$, then either $p=(a,0)$ with $a\neq 0$, or $p=(0,b)$ with $b\neq 0$, or $p=(0,0)$.
In these three cases one checks that the punctured line is contained in $N$ by choosing respectively
\[
  v=(0,1),\qquad v=(1,0),\qquad v=(1,1).
\]
Hence $N$ is weak Nikodym.
On the other hand, if $p=(a,b)\in N$ and $\ell$ is any affine line through $p$, then $\ell$ contains a point with
$x$-coordinate $0$ (unless $\ell$ is vertical, in which case it contains a point with $y$-coordinate $0$).
Thus every line through $p$ meets $\F_q^2\setminus N$, so $N$ is not Nikodym.

\medskip
\emph{(2) Weak Nikodym sets and induced matchings are complements.}
Suppose $N\subset\F_q^d$ is weak Nikodym.
For each $x\in\F_q^d\setminus N$, choose a line $\ell_x$ with $\ell_x^\ast\subset N$.
If $x\neq x'$ are two points outside $N$, then $x'\notin \ell_x$ (since $\ell_x^\ast\subset N$ but $x'\notin N$),
so the pairs $\{(x,\ell_x)\}_{x\notin N}$ form an induced matching in the point--line incidence graph.
Conversely, if $\{(p,\ell_p)\}_{p\in M}$ is an induced matching, then with $N:=\F_q^d\setminus M$ we have
$\ell_p^\ast\subset N$ for each $p\in M$, i.e.\ $N$ is weak Nikodym.

\medskip
\emph{(3) Product trick: weak Nikodym $\Rightarrow$ Nikodym one dimension up.}
Assume $N\subset\F_q^d$ is weak Nikodym and set $N':=N\times \F_q\subset \F_q^{d+1}$.
Fix $(x,a)\in\F_q^{d+1}$.
If $x\notin N$, pick $v_x\neq 0$ with $\ell(x,v_x)^\ast\subset N$ and take $v':=(v_x,0)$; then
$\ell((x,a),v')^\ast\subset N\times\{a\}\subset N'$.
If $x\in N$, take $v':=(0,\dots,0,1)$; then $\ell((x,a),v')^\ast=\{(x,a+\lambda):\lambda\neq 0\}\subset N'$.
Thus $N'$ is a Nikodym set in $\F_q^{d+1}$.
\end{proof}

We now derive Theorem~\ref{thm:nikodym-3d}.

\begin{proof}[Proof of Theorem~\ref{thm:nikodym-3d}]
Let $M\subset \F_q^{d - 1}$ be the point set of an induced matching in $\II_q^{(d) - 1}$, 
and set $N_0:=\F_q^{d - 1}\setminus M$.
By Proposition~\ref{prop:nikodym-basic}(2), $N_0$ is weak Nikodym in $\F_q^{d - 1}$.
Applying Proposition~\ref{prop:nikodym-basic}(3) repeatedly, we obtain that
\[
  N:=N_0\times \F_q^{\,d-1}\subset \F_q^d
\]
is a Nikodym set. Its size is
\[
  |N| = (q^{d - 1}-|M|)\,q = q^d - |M|\,q.
\]

We now choose $M$ from the appropriate induced-matching theorem.

\smallskip
\noindent\emph{Three Dimensions.}  Take $d=3$ and let $M\subset\F_q^2$ be the point set of the induced matching from Proposition~\ref{prop:ruzsa-lift-prime-power}. This gives
$$|N| = q^3 - \Omega_t(q^{2.1167}).$$

\smallskip

\smallskip
\noindent\emph{Higher dimensions.}  Let $M\subset\F_q^{d - 1}$ be the point set from Theorem~\ref{thm:prime-power-high-dim}.
This gives
$$|N| = q^d - \Omega_d\left(q^{d - \epsilon_{d - 1}  2 \log t / \log p}\right)$$
where $\epsilon_{d - 1} \ll (\log\log (d-1))^{-1} \ll (\log\log d)^{-1}$.
\end{proof}

\subsection{Nikodym sets in $2$ dimensions}

We now explain how to obtain the \emph{polynomial} improvement for planar Nikodym sets from Theorem \ref{thm:nikodym-2d}. The main idea will be to use our high-dimensional point-line matching from Theorem \ref{thm:prime-power-high-dim} in order to produce a large set of lattice points in $[N]^{d} \times [M]$, each endowed with a suitable ``escape line'' (one should think of this as a certain Nikodym-like property in $\Z^{d+1}$). We will then be able to project such a set to $\F_{q}^{2}$. 

We start by first recording this projection mechanism.

\begin{proposition}\label{prp:higher dim to 2d}
Suppose $L,M,N$ are positive integers with $LM\le N$.
Let $P\subset [N]^d\times [M]$ be a set of points with the following property:
for every $v\in [N]^d\times [M]$ there exists a direction
\[
  s_v\in [-L,L]^d\times\{1\} 
\]
such that the (integer) punctured line $\ell(v,s_v)^\ast:=\{v+t s_v:t\in\Z\setminus\{0\}\}$ is disjoint from $P$.

Then for every prime $q>(3N)^d$ we have
\[
  \Nik(2,q)\ \le\ q^2-|P|.
\]
\end{proposition}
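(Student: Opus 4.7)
The plan is to exhibit a Nikodym set of size exactly $q^2 - |P|$ via a base-$3N$ projection. I would set $B := 3N$ and work with the $\Z$-linear map $\pi \colon \Z^{d+1} \to \F_q^2$ defined by
\[
  \pi(x_1,\dots,x_d, y) := \bigl(\phi(x_1, \dots, x_d),\; y\bigr) \bmod q,
  \qquad \phi(x_1,\dots,x_d) := \sum_{i=1}^d x_i B^{i-1}.
\]
The assumption $q > B^d$ would allow base-$B$ digit recovery on small integer vectors; in particular $\pi$ is injective on $[N]^d \times [M]$, so the set $\Omega := \F_q^2 \setminus \pi(P)$ has cardinality exactly $q^2 - |P|$. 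The goal is then to prove $\Omega$ is a Nikodym set.

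Given $x = (a,b) \in \F_q^2$, viewed in $\{0,\ldots,q-1\}^2$, I would split the search for an escape direction $v^* \neq 0$ (with $\ell(x, v^*)^\ast \subset \Omega$) into three cases. If $b \notin [M]$, the containment $\pi(P) \subset \F_q \times [M]$ shows the horizontal direction $v^* = (1,0)$ works. If $b \in [M]$ but $a \notin \phi([N]^d)$, then $\pi(P) \subset \phi([N]^d) \times \F_q$ shows the vertical direction $v^* = (0,1)$ works. Neither of these cases uses any structural property of $P$ beyond $P \subset [N]^d \times [M]$.

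The main case is $b \in [M]$ and $a \in \phi([N]^d)$, where there is a unique lift $v \in [N]^d \times [M]$ with $\pi(v) = x$. Here I would invoke the hypothesis to pick an escape direction $s_v \in [-L,L]^d \times \{1\}$ for $v$ and set $v^* := \pi(s_v)$ (nonzero since its second coordinate is $1$). Assuming for contradiction that $x + s v^* = \pi(v')$ for some $s \in \F_q \setminus \{0\}$ and $v' \in P$, comparing second coordinates would force $s \equiv v'_{d+1} - b \pmod q$, so the integer $k := v'_{d+1} - b$ satisfies $k \equiv s \pmod q$; since $|k| < M < q$ and $s \neq 0$, this gives $k \neq 0$. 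The integer vector $w := v + k s_v - v'$ then has $w_{d+1} = 0$ and, using $ML \leq N$, $|w_i| \leq (N-1) + (M-1)L < 2N$ for each $i \leq d$. From $\pi(w) = 0$ one gets $\phi(w) \equiv 0 \pmod q$; the triangle inequality yields $|\phi(w)| \leq (2N-1) \cdot \tfrac{B^d - 1}{B-1} < q$, forcing $\phi(w) = 0$ in $\Z$, and then the base-$B$ argument (valid since $|w_i| < B$) forces $w = 0$. Hence $v' = v + k s_v \in \ell(v, s_v)^\ast$, contradicting $\ell(v, s_v)^\ast \cap P = \emptyset$.

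The hard part will be the arithmetic bookkeeping in this third case: $B$ must be large enough for digit recovery ($B > 2N$, satisfied by $B = 3N$) while $q > B^d$ rules out mod-$q$ wrap-around, and the slack $ML \leq N$ in the hypothesis is precisely what keeps $|w_i| < 2N$. Once these inequalities are lined up, every $x \in \F_q^2$ obtains a valid escape direction, so $\Omega$ is Nikodym and $\Nik(2, q) \leq q^2 - |P|$.
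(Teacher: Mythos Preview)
Your proposal is correct and follows essentially the same approach as the paper: both use the base-$3N$ projection $\pi$ to $\F_q^2$, take $\Omega=\F_q^2\setminus\pi(P)$, and split into the same three cases depending on whether the point lies in the image of $[N]^d\times[M]$. The only cosmetic difference is in the main case, where the paper phrases the argument as injectivity of $\pi$ on the enlarged box $[-(N-1),2N]^d\times[-(M-1),2M]$ (so that $v+t_0 s_v$ lands in this box and $\pi(v+t_0 s_v)\notin\pi(P)$ follows directly), whereas you argue by contradiction and recover $w=0$ via an explicit base-$B$ digit computation; these are the same idea unpacked differently.
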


\begin{proof}
Define a map $\phi:\Z^{d+1}\to\F_q^2$ by
\[
\phi(n_1,\dots,n_d,m)
:=\Bigl(\sum_{i=1}^d n_i(3N)^{i-1}\bmod q,\ m\bmod q\Bigr).
\]
Since $q>(3N)^d$, the base-$(3N)$ expansion in the first coordinate is unique on the box
\[
B:=\bigl[-(N-1),\,2N\bigr]^d\times\bigl[-(M-1),\,2M\bigr],
\]
hence $\phi$ is injective on $B$. In particular $\phi|_P$ is injective and therefore
$|\phi(P)|=|P|$.

Set
\[
\mathcal{N}:=\F_q^2\setminus \phi(P).
\]
We claim that $\mathcal{N}$ is a Nikodym set; since $|\mathcal{N}|=q^2-|P|$, this will prove the proposition.

Fix $w=(w_1,w_2)\in \F_q^2$. Let
\[
X:=\Bigl\{\sum_{i=1}^d n_i(3N)^{i-1}\bmod q:\ n_i\in[N]\Bigr\},
\qquad
Y:=\{1,2,\dots,M\}\subset \F_q,
\]
so that $\phi([N]^d\times[M])\subset X\times Y$.

\smallskip\noindent
\textbf{Case 1: $w\notin X\times Y$.}
If $w_1\notin X$, then the vertical line through $w$ is disjoint from $X\times Y$ and hence from $\phi(P)$.
If instead $w_2\notin Y$, then the horizontal line through $w$ is disjoint from $X\times Y$ and hence from $\phi(P)$.
In either case $w$ has a punctured line contained in $\mathcal{N}$.

\smallskip\noindent
\textbf{Case 2: $w\in X\times Y$.}
Then there exists $v\in[N]^d\times[M]$ with $w=\phi(v)$.
Let $z_w:=\phi(s_v)\in\F_q^2$. We claim that $\ell(w,z_w)^\ast\cap \phi(P)=\emptyset$.

Take any $t\in\F_q^\times$.
If $t\not\equiv t_0\pmod q$ for every integer $t_0\in\{-M,\dots,M\}$, then
$w_2+t\notin Y$ (since $Y-Y\subset\{-M,\dots,M\}\bmod q$), and hence $w+t z_w\notin X\times Y$,
so certainly $w+t z_w\notin\phi(P)$.

Otherwise, write $t\equiv t_0\pmod q$ with $t_0\in\{-M,\dots,M\}\setminus\{0\}$.
By linearity of $\phi$ we have
\[
w+t z_w=\phi(v)+t\phi(s_v)=\phi(v+t_0 s_v)=:\phi(v').
\]
Since $|t_0|\le M$ and $\|s_v\|_\infty\le L$ with $LM\le N$, we have $v'\in B$.
Moreover, by hypothesis $\ell(v,s_v)^\ast\cap P=\emptyset$, so $v'\notin P$.
Injectivity of $\phi$ on $B$ then implies $\phi(v')\notin\phi(P)$, i.e. $w+t z_w\notin\phi(P)$.

Thus for every $t\neq 0$ we have $w+t z_w\in \mathcal{N}$, proving that $\mathcal{N}$ is Nikodym.
\end{proof}

\medskip

We next explain how to produce sets $P\subset[N]^d$ with many points and many ``private'' lines, as required above.
The following statement can be extracted from the high-dimensional induced-matching construction in Section~\ref{sec:prime-highdim}.

\begin{proposition}\label{prop:lattice matching}
For every $\varepsilon>0$ there exists $d=O_\varepsilon(1)$ such that for every $N\ge 1$ one can find a set
$P\subset [N]^d$ with
\[
  |P|\ \gg_\varepsilon\ N^{d-\varepsilon},
\]
and such that for each $p\in P$ there exists a non-zero direction $s_p\in [-N^{\varepsilon},N^\varepsilon]^d$ with
\[
  \ell(p,s_p)^\ast\cap P=\emptyset.
\]
Moreover, one may take $d\le \exp(O(\varepsilon^{-1}))$.
\end{proposition}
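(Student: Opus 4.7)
The plan is to extract Proposition~\ref{prop:lattice matching} directly from the integer-level machinery driving the proof of Theorem~\ref{thm:prime-high-dim} in Section~\ref{sec:prime-highdim}. Recall that in that proof, the set $P$ and its direction vectors $s_p=(1,y_I)\in\Z^{|I_k|+1}$ are first constructed as lattice data inside a box, and only reduced modulo a prime $q$ at the very end. The non-incidence argument splits into two parts: the coordinate-size bounds of the form $3N<q$ are used to lift congruences mod $q$ to integer identities $x'_I=x_I+hy_I$, and then Lemma~\ref{lem:nice--line} together with the $k$-th-power-difference-freeness of $A$ forces $h=0$. The second part is purely an argument over $\Z$, and the first part becomes automatic once we insist on lattice lines from the outset, because the slope $s_p$ already has first coordinate $1$: if $p'=p+hs_p$ over $\Z$, the identities $x'_I=x_I+hy_I$ are immediate, with no mod-$q$ lift required.

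Given $\varepsilon>0$, I would first choose $k$ to be the smallest positive integer with $2k+1$ prime and with $\delta_k:=\tfrac{2}{\log k}+\tfrac{1}{k^2}\le\varepsilon$. By Bertrand's postulate together with Lemma~\ref{lem:$k$--th--power--difference--free}, such $k$ exists with $k=\exp(O(\varepsilon^{-1}))$; combining the crude bound $|I_k|=O(k^{4}G(k))$ read off from its definition with Vinogradov's estimate $G(k)=O(k\log k)$ then yields $d:=|I_k|+1=k^{O(1)}=\exp(O(\varepsilon^{-1}))$, matching the ``moreover'' clause. For $N$ sufficiently large in terms of $\varepsilon$---bounded $N$ being absorbed into the implicit constant in $\gg_\varepsilon$---I would set $\ell=k^2$, $M=\lfloor N^{1/\ell}\rfloor$, and fix a $k$-th-power-difference-free subset $A\subset[N^k]$ of size $\gg_k N^{kc_k}$ from Lemma~\ref{lem:$k$--th--power--difference--free}. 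Exactly as in Section~\ref{sec:prime-highdim}, I would average over shifts $s\in\Z$ to select $\Gamma_s\subset[N]^{I_k}$ with $|\Gamma_s|\gg_k |A|\,N^{|I_k|}/N^k$, and then take
\[
  P\ :=\ [\lfloor N/(C_kM)\rfloor]\times\Gamma_s\ \subset\ [N]^d,
  \qquad
  s_p\ :=\ (1,y_I(x_I))\in\Z^d,
\]
where $y_I$ is the direction supplied by Lemma~\ref{lem:nice--line} and $C_k$ is the implicit constant in $\infnorm{y_I}\le C_kM$. Provided $N$ is large enough that $C_k\le N^{\varepsilon-1/\ell}$, this forces $\infnorm{s_p}\le C_kM\le N^\varepsilon$ as required.

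The non-incidence check then collapses to a single line: if $p'=p+hs_p\in P$ for some nonzero integer $h$, the first coordinate of $s_p$ being $1$ forces $h=p'_1-p_1$, the remaining coordinates give the identity $x'_I=x_I+hy_I$ in $\Z^{I_k}$, and Lemma~\ref{lem:nice--line} yields $\Phi_N(x'_I)-\Phi_N(x_I)=h^k$. Since $\Phi_N(x_I),\Phi_N(x'_I)\in A+s$, this places the nonzero $k$-th power $h^k$ inside $A-A$, contradicting the choice of $A$. The size lower bound $|P|\gg_\varepsilon N^{d-\delta_k}\ge N^{d-\varepsilon}$ is then identical to the calculation producing Proposition~\ref{prop:Ikd} at the end of Section~\ref{sec:prime-highdim}. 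I do not foresee a genuine obstacle in executing this plan; the only delicate aspect is the bookkeeping needed to confirm that the mod-$q$ reductions in Section~\ref{sec:prime-highdim} play no essential role once one restricts from the outset to lattice lines with first slope coordinate equal to $1$, which is essentially immediate from the structure of Lemma~\ref{lem:nice--line}.
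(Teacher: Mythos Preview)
Your proposal is correct and follows essentially the same approach as the paper: both extract the lattice construction directly from the integer-level machinery of Section~\ref{sec:prime-highdim}, choosing $k$ with $2k+1$ prime and $\tfrac{2}{\log k}+\tfrac{1}{k^2}\le\varepsilon$, setting $d=|I_k|+1$, and taking $P=[\lfloor N/(C_kM)\rfloor]\times\Gamma_s$ with slopes $s_p=(1,y_I)$. The only cosmetic difference is that the paper introduces an auxiliary prime $q$ with $N=\lfloor q/4\rfloor$ and then observes that the mod-$q$ non-incidence verified in Section~\ref{sec:prime-highdim} \emph{a fortiori} rules out integer incidence, whereas you argue directly over $\Z$; your route is arguably slightly cleaner but the content is identical.
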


\begin{proof}
 We recall the proof of Theorem~\ref{thm:prime-high-dim} in Section~\ref{sec:prime-highdim}. Let $k$ be a positive integer so that $2k+1$ is prime and $\frac{2}{\log k}+\frac{1}{k^2}\le \varepsilon$ (thus $k\le \exp(O(\varepsilon^{-1}))$), and let $I = I_k$ be the index set introduced in Section~\ref{sec:prime-highdim}. Set $d = |I_k| + 1$ (thus $d\le k^{O(1)}\le \exp(O(\varepsilon^{-1}))$).
 
 Observe that we may assume $N$ is sufficiently large, by changing the implicit constant in $|P|\gg_\varepsilon N^{d-\varepsilon}$. At the price of further constants, we may assume $N=\lfloor q/4\rfloor$ for some prime $q$ (we do this only to match the notation used to prove Theorem~\ref{thm:prime-high-dim}).
 
 Now, for each large prime $q$, write $N = \lfloor{q / 4\rfloor}$ and $M = \lfloor{ N^{1 / k^2} \rfloor}$. We constructed a set $\Gamma_s \subset [N]^{I}$, and then considered a subset of $[q] \times [q]^{d - 1} $ defined by
    $$P := \{(z, x_I) \in [q] \times [q]^I: z \in [N / C_k M], x_I \in \Gamma_s\}$$
    with size
    $$\abs{P} \gg_k q^{d - \frac{2}{\log k} - \frac{1}{k^2}}\gg_\varepsilon N^{d-\varepsilon}.$$
    For each $p = (z, x_I)$ in $P$, we define the line $\ell_p = \{(z + h, x_I + y_I h): h \in \Z\}$ with slope $s_p = (1, y_I)$, where $||y_I||_\infty \leq C_k M \le N^{\varepsilon}$ (using that $N$ is large for the final inequality). 
    
    We checked that if $p,p'\in P$ satisfied $p'\equiv p+t\cdot s_p\pmod{q}$ for some $t\in \Z$, that $p=p'$. This in particular implies that $\ell_p\cap P=\{p\}$ or equivalently $\ell(p,s_p)^* \cap P=\emptyset$, as desired.
\end{proof}

We now upgrade Proposition~\ref{prop:lattice matching} to the stronger hypothesis needed in
Proposition~\ref{prp:higher dim to 2d}, namely obtaining an escaping line for \emph{every} point of the ambient box whose slope $s$ has `$1$' as its final coordinate. 

\begin{corollary}\label{cor:lattice-nikodym}
For every $\varepsilon>0$ there exists $d=O_\varepsilon(1)$ such that for all $N\ge 1$, with
\[
M:=N^{1-\varepsilon},\qquad L:=N^\varepsilon,
\]
there exists a set $P\subset [N]^d\times [M]$ with
\[
  |P|\ \gg_\varepsilon\ N^{d+1-2\varepsilon},
\]
and such that for every $v\in [N]^d\times [M]$ there exists $s_v\in [-L,L]^d\times\{1\}$ satisfying
\[
  \ell(v,s_v)^\ast\cap P=\emptyset.
\]

In fact, we may take $d=\exp(O(\varepsilon^{-1}))$.
\end{corollary}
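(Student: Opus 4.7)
The plan is to lift the set produced by Proposition~\ref{prop:lattice matching} from $[N]^d$ to $[N]^d \times [M]$ by a simple product construction $P := P_0 \times [M]$, and exploit the new coordinate to provide escape directions not just for points of $P_0$ but for every point of the ambient box, in such a way that the last coordinate of every direction is forced to be $1$ as required.

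Concretely, first apply Proposition~\ref{prop:lattice matching} with parameter $\varepsilon$ to obtain $d = \exp(O(\varepsilon^{-1}))$ and a set $P_0 \subset [N]^d$ with $|P_0| \gg_\varepsilon N^{d-\varepsilon}$, equipped with nonzero integer directions $s_p^0 \in [-N^\varepsilon, N^\varepsilon]^d$ for each $p \in P_0$ satisfying $\ell(p, s_p^0)^\ast \cap P_0 = \emptyset$. Then set $P := P_0 \times [M] \subset [N]^d \times [M]$, which has
\[
  |P| = |P_0| \cdot M \gg_\varepsilon N^{d-\varepsilon} \cdot N^{1-\varepsilon} = N^{d+1-2\varepsilon},
\]
giving the required size bound.

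To assign escape directions, fix $v = (v', v_{d+1}) \in [N]^d \times [M]$ and define
\[
  s_v := \begin{cases} (s_{v'}^0, 1) & \text{if } v' \in P_0, \\ (0,\dots,0,1) & \text{if } v' \notin P_0. \end{cases}
\]
In either case $s_v \in [-L, L]^d \times \{1\}$ (using $L = N^\varepsilon$ and $0 \in [-L,L]$). For any $t \in \mathbb{Z} \setminus \{0\}$, the point $v + t s_v$ belongs to $P = P_0 \times [M]$ only if its first $d$ coordinates lie in $P_0$. When $v' \in P_0$ these coordinates equal $v' + t s_{v'}^0$, which lies outside $P_0$ by the escape property of Proposition~\ref{prop:lattice matching}; when $v' \notin P_0$ these coordinates equal $v'$ itself, which is outside $P_0$ by hypothesis. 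In both cases $\ell(v, s_v)^\ast \cap P = \emptyset$, completing the proof.

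There is essentially no obstacle here: Proposition~\ref{prop:lattice matching} does all the real work, and the only subtlety — that the statement of Corollary~\ref{cor:lattice-nikodym} demands escape directions for \emph{every} point in $[N]^d \times [M]$ (not just points of $P$) and with last coordinate precisely $1$ — is handled cleanly by the product structure: the extra $[M]$-factor lets the constant direction $(0,\dots,0,1)$ escape $P$ for points outside $P_0 \times [M]$, while for points in $P_0 \times [M]$ we simply append a $1$ to the pre-existing direction from the lower-dimensional construction.
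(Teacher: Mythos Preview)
Your proof is correct and follows the same product-construction idea as the paper, but in fact executes it more efficiently. The paper first passes to $P_1:=P_0\times[N]\subset[N]^{d_0+1}$, assigning directions $(s_u^{(0)},0)$ for $u\in P_0$ and $(0,\dots,0,1)$ for $u\notin P_0$, and only then takes a second product $P:=P_1\times[M]$ and appends a final $1$ to every direction; this uses \emph{two} auxiliary coordinates and ends with $d:=d_0+1$. You observe that a single $[M]$-factor already suffices: appending $1$ to $s_{v'}^{0}$ when $v'\in P_0$, and using $(0,\dots,0,1)$ when $v'\notin P_0$, simultaneously forces the last coordinate of $s_v$ to be $1$ and gives the required escape property. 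Both routes yield $d=\exp(O(\varepsilon^{-1}))$ and $|P|\gg_\varepsilon N^{d+1-2\varepsilon}$, so nothing is lost quantitatively, and your version is strictly simpler.
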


\begin{proof}
Let $d_0$ be large enough so that Proposition~\ref{prop:lattice matching} holds for $\varepsilon$. Now consider $N\ge 1$.
Let $P_0\subset [N]^{d_0}$ be a set of size $|P_0|\gg_\varepsilon N^{d_0-\varepsilon}$ and for each $u\in P_0$ fix a slope 
$s_u^{(0)}\in [ -L,L]^{d_0}$ with $\ell(u,s_u^{(0)})^\ast\cap P_0=\emptyset$.

Define $P_1:=P_0\times [N]\subset [N]^{d_0+1}$.  For $w=(u,n)\in P_1$ set
\[
s_w^{(1)}:=(s_u^{(0)},0)\in [-L,L]^{d_0+1}.
\]
Then $\ell(w,s_w^{(1)})^\ast$ projects onto $\ell(u,s_u^{(0)})^\ast$ in the first $d_0$ coordinates, which is disjoint from $P_0$ (the projection of $P_1$ onto the first $d_0$ coordinates). Whence
$\ell(w,s_w^{(1)})^\ast\cap P_1=\emptyset$.

If $w=(u,n)\in ([N]^{d_0}\setminus P_0)\times [N]$, take instead
\[
s_w^{(1)}:=(0,\dots,0,1)\in [-L,L]^{d_0+1}.
\]
Then $\ell(w,s_w^{(1)})^\ast$ varies only in the last coordinate and never meets $P_1=P_0\times[N]$.

Finally set $P:=P_1\times [M]\subset [N]^{d_0+1}\times[M]$ and for $v=(w,m)$ define
\[
s_v:=(s_{w}^{(1)},1)\in [-L,L]^{d_0+1}\times\{1\}.
\]
By construction, $\ell(v,s_v)^\ast$ projects onto $\ell(w,s_{w}^{(1)})^*$ in the first $d_0+1$ coordinates, whence $\ell(v,s_v)^*\cap P=\emptyset$.
Moreover,
\[
|P|=|P_0|\cdot N\cdot M\ \gg_\varepsilon\ N^{d_0-\varepsilon}\cdot N\cdot N^{1-\varepsilon}
= N^{(d_0+1)+1-2\varepsilon}.
\]
Renaming $d:=d_0+1$ completes the proof.
\end{proof}

Finally, we combine Corollary~\ref{cor:lattice-nikodym} with Proposition~\ref{prp:higher dim to 2d} to obtain planar Nikodym sets.

\begin{proof}[Proof of Theorem~\ref{thm:d=2}]
Take $\varepsilon=1/3$.  By Corollary~\ref{cor:lattice-nikodym} there exists $d=O(1)$ such that for all $N\ge 1$ there is
a set
\[
P\subset [N]^d\times [N^{2/3}]
\quad\text{with}\quad
|P|\gg N^{d+1/3},
\]
and with escaping directions $s_v\in[-N^{1/3},N^{1/3}]^d\times\{1\}$ from every point of the ambient box.
Now let $q$ be a large prime and set $N:=\lfloor q^{1/d}/10\rfloor$. Then $(3N)^d<q$, so Proposition~\ref{prp:higher dim to 2d}
applies (with $M=N^{2/3}$ and $L=N^{1/3}$) and yields
\[
\Nik(2,q)\ \le\ q^2-|P|
\ \le\ q^2-\Omega\!\bigl(N^{d+1/3}\bigr)
\ =\ q^2-\Omega\!\bigl(q^{1+1/(3d)}\bigr).
\]
This proves the theorem.
\end{proof}

\subsection{Minimal blocking sets from Nikodym sets}
Recall that $\PG(2,q)$ is the projective plane over $\F_q$, and that $B(2,q)$ is the maximum cardinality of a minimal blocking set in $\PG(2,q)$. In this short subsection, we verify Proposition~\ref{prop:blocking}. 

To this end, we use a change in perspective. While a blocking set is a set of points $P$ which intersects every line in $\PG(2, q)$, taking the dual yields a set of lines $L$ where $\bigcup_{\ell\in L}\ell = \PG(2, q)$. We call such a set of lines a \textit{cover}, and say $L$ is a \textit{minimal cover} if every $L'\subsetneq L$ is not a cover. Then, an equivalent way to define $B(2, q)$ is as the maximum cardinality of a minimal cover of $\PG(2, q)$.

With this established, we can prove our reduction.
\begin{proof}[Proof of Proposition~\ref{prop:blocking}] 

Fix a Nikodym set $N\subset \F_q^2$ of size $\Nik(2,q)$. We write points of $\PG(2, q)$ in projective notation $[x:y:z]$, with $[x:y:z] = [\lambda x: \lambda y: \lambda z]$ for $\lambda \in \F_q^{\times}$. Recall the following basic facts about $\PG(2, q)$.
\begin{enumerate}
    \item $\PG(2, q)$ consists of the $q^2$ \emph{affine points} of the form $[x: y: 1]$ which we identify with $(x, y) \in \F_q^2$, and the $(q + 1)$ \emph{points at infinity} of the form $[x:y:0]$.
    \item The points at infinity lie on a single \emph{line at infinity} $\ell_\infty$, with equation $z = 0$.
\end{enumerate}
\noindent Henceforth, let us identify any point $(x,y)\in \F_q^2$ with the corresponding affine point $[x:y:1]$.

For each $v\in \F_q^2$, there exists some direction $z_v\in \F_q^2\setminus\{0\}$ so that $\ell(v,z_v)^*\subset N$. Set $\ell_v:=\ell(v,z_v)$. The union of $\{\ell_v:v\in \F_q^2\}$ contains all affine points of $\PG(2, q)$. Hence, the line family
$$L_0 = \{\ell_v:v\in \F_q^2\} \cup \{\ell_\infty\}$$
covers $\PG(2, q)$.

We observe that for each $v \in \F_q^2\backslash N$, the only element of $L_0$ that contains $v$ is $\ell_v$: indeed, for any $w \notin v$, all affine points of $\ell_w \backslash \{w\}$ lies in $N$, so $v \notin \ell_w$. The line at infinity $\ell_\infty$ consists of points at infinity, so it also cannot contain $v$.

Let $L \subset L_0$ be any minimal subfamily of $L_0$ that covers $\PG(2, q)$. In light of the preceding observation, we must have
$$L \supset \{\ell_v: v \in \F_q^2\backslash N\}.$$
Therefore, $L$ is a minimal cover with $\abs{L} \geq q^2 - \abs{N}$, as desired.
\end{proof}

\section{Minimal distance configurations} 
\label{sec:Heilbronn}
In this section, we prove our results on minimal distance problem. First, we show our main observation, Theorem~\ref{thm:lattice-to-PL}.

\begin{theorem}[Restate of Theorem~\ref{thm:lattice-to-PL}]
Fix $d\ge 1$ and let $N,M,L\ge 1$ be integers with $N\ge ML$.
Let $P\subset [N]^d\times [M]\subset \mathbb{Z}^{d+1}$ be a set of lattice points.
Assume that for each $p\in P$ we are given an integer direction vector
\[
  s_p=(u_p,1)\in \mathbb{Z}^{d+1}
  \qquad\text{with}\qquad
  \|u_p\|_\infty\le L,
\]
such that for all distinct $p,p'\in P$ one has
\[
  p'\notin p+\mathbb{R}s_p
\]
which, due to the last coordinate of $s_p$ being $1$, is equivalent to $p'\notin p+\mathbb{Z}s_p$. Then there exists a set of points $p_1, \cdots, p_{|P|} \in [0, 1]^{d + 1}$ and lines $\ell_1, \cdots, \ell_{|P|}$ with $p_i \in \ell_i$ such that for each $i \neq j$, we have
\[
  d_\infty(p_i, \ell_j)\ \ge\ \frac{1}{2N}.
\]
\end{theorem}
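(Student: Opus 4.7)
The plan is to use an anisotropic linear rescaling that maps the lattice box $[N]^d\times[M]$ into $[0,1]^{d+1}$ while simultaneously flattening the ``slope'' of each input direction. Define
\[
  \Psi\colon\R^{d+1}\to\R^{d+1},\qquad \Psi(x_1,\dots,x_d,x_{d+1})=\bigl(x_1/N,\dots,x_d/N,\;Lx_{d+1}/N\bigr),
\]
set $\tilde p_i:=\Psi(p_i)$, and let $\tilde \ell_i$ be the line through $\tilde p_i$ with direction $(u_{p_i},L)\in\R^{d+1}$. Since $ML\le N$, the image of $[N]^d\times[M]$ under $\Psi$ lies in $[1/N,1]^d\times[L/N,LM/N]\subseteq[0,1]^{d+1}$, and $\tilde p_i\in\tilde\ell_i$ is tautological.

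Next I would put the scaled distance in a usable form. Fix $i\neq j$ and write $v:=p_j-p_i\in\Z^{d+1}\setminus\{0\}$. Substituting $\tau=\tau'/N$ in the line's parametrization pulls out a uniform factor of $1/N$ and yields
\[
  d_\infty(\tilde p_j,\tilde\ell_i)=\frac{1}{N}\min_{\tau'\in\R}\max\!\left(\max_{1\le k\le d}\bigl|v_k-\tau'(u_{p_i})_k\bigr|,\;L\bigl|v_{d+1}-\tau'\bigr|\right).
\]
Hence it suffices to show the inner $\min$-$\max$ is at least $1/2$. The integer gadget that drives this is the vector $w\in\Z^d$ with $w_k:=v_k-v_{d+1}(u_{p_i})_k$. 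Since the last coordinate of $s_{p_i}$ equals $1$, the hypothesis $p_j\notin p_i+\Z s_{p_i}$ rules out $v=v_{d+1}s_{p_i}$ and therefore forces $w\neq 0$, so $\|w\|_\infty\ge 1$.

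The argument then splits into two regimes governed by $\epsilon:=\tau'-v_{d+1}$. If $|\epsilon|\ge 1/(2L)$, the last-coordinate term $L|\epsilon|$ is already $\ge 1/2$. Otherwise, picking any index $k$ with $|w_k|\ge 1$ and using the identity $v_k-\tau'(u_{p_i})_k=w_k-\epsilon(u_{p_i})_k$ together with $\|u_{p_i}\|_\infty\le L$ gives
\[
  \bigl|v_k-\tau'(u_{p_i})_k\bigr|\ge |w_k|-|\epsilon|\cdot L\ge 1-\tfrac{1}{2L}\cdot L=\tfrac12.
\]
In both regimes the inner $\max$ is at least $1/2$, which is the bound we need.

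I expect the only genuine obstacle to be recognizing that one \emph{must} rescale the last coordinate by the extra factor $L$: the ``naive'' uniform scaling $p\mapsto p/N$ with unchanged direction $(u_{p_i},1)$ can produce distances of order $1/(LN)$, strictly smaller than $1/(2N)$ whenever $L\ge 2$. The factor $L$ built into $\Psi$ is precisely what balances the two regimes above and extracts the sharp $1/(2N)$ bound from the integrality of $w$.
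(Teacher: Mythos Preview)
Your proof is correct and essentially identical to the paper's: the map $\Psi$ is exactly the paper's $\varphi$, your direction $(u_{p_i},L)$ is a scalar multiple of the paper's $\varphi(s_p)$, and your case split on $|\epsilon|\gtrless 1/(2L)$ with the integer vector $w$ is precisely the paper's case split on $|p'_{d+1}-p_{d+1}-t|$ combined with the observation $\|p'-p-\tilde t\,s_p\|_\infty\ge 1$. Your closing remark about why the extra factor $L$ in the last coordinate is necessary is a nice addition not present in the paper.
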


\begin{proof}
    Let $\varphi: [N]^d \times [M]\to [0,1]^{d+1}$ be the linear map $$\varphi(n_1,\dots,n_d,m) :=\left(\frac{1}{N} n_1, \cdots, \frac{1}{N} n_d, \frac{L}{N} m\right)$$
    where the image lies in $[0, 1]^{d + 1}$ by the assumption $LM \le N$. Let our point set be $Q = \{\varphi(p): p \in P\}$ in $[0, 1]^{d + 1}$, and let the line $\ell_q$ corresponding to $q = \varphi(p) \in Q$ be defined by the direction vector $\varphi(s_p)$.

    It suffices to check that, for distinct $q = \varphi(p), q' = \varphi(p')\in Q$ and $r = \varphi(p)+t\cdot \varphi(s_p)$ with $t \in \R$, we have
    $$\lVert{q' - r\rVert}_\infty\ge \frac{1}{2N}.$$ 
    This implies that $d_{\infty}(q', \ell_{q}) \ge 1 / 2N$, as desired. 

    We first suppose $\abs{p_{d+1}'-p_{d+1}-t\cdot (s_p)_{d+1}} > \frac{1}{2L}$. Then we have that 
    $$||\varphi(p')-\varphi(p)-t\cdot \varphi(s_{p})||_\infty\ge |\varphi(p')_{d+1}-\varphi(p)_{d+1}-t\cdot \varphi(s_p)_{d+1}|> \frac{L}{N}\cdot \frac{1}{2L}=\frac{1}{2N}.$$

    Now we suppose $\abs{p_{d+1}'-p_{d+1}-t\cdot (s_p)_{d+1}} \leq \frac{1}{2L}$. By definition, we have
    $(s_p)_{d+1} = 1$, so
    $$\abs{t - (p_{d+1}'-p_{d+1})} \leq \frac{1}{2L}.$$
    Set $\Tilde{t}:= p_{d+1}'-p_{d+1}$. Observe that 
    $$||\Tilde{t}\cdot \varphi(s_p)-t\cdot \varphi(s_p)||_\infty \le \frac{1}{2L}||\varphi(s_p)||_\infty = \frac{1}{2L} \cdot \frac{L}{N}< \frac{1}{2N}$$
    where we have $||\varphi(s_p)||_\infty \leq \frac{L}{N}$ by the assumptions that $s_p = (u_p, 1)$ and $||u_p||_\infty \leq L$. Then, we note that $p'-p-\Tilde{t}\cdot s_p \neq 0$ since $p'\not\in p+ \R s_p$, and each of the vectors $p', p, s_p, \Tilde{t}$ are integral. Hence, we must have
    $$|| p'-p-\Tilde{t}\cdot s_p ||_\infty \geq 1.$$
    Therefore, we have
    $$||\varphi(p') - \varphi(p) - \Tilde{t}\cdot \varphi(s_p)||_\infty \geq \frac{1}{N} || p'-p-\Tilde{t}\cdot s_p ||_\infty \geq \frac{1}{N}$$
    Finally, by the triangle inequality, we conclude that
    \[||\varphi(p') - \varphi(p) - t\cdot \varphi(s_p))||_\infty \ge ||\varphi(p') - \varphi(p) - \Tilde{t}\cdot \varphi(s_p)||_\infty -||(t-\Tilde{t})\varphi(s_p)||_\infty \geq \frac{1}{N}-\frac{1}{2N}=\frac{1}{2N}\]
    as desired.
\end{proof}
Corollaries~\ref{cor:PL2} and \ref{cor:PLd} follow by observing that the constructions in Theorems~\ref{thm:d=2} and \ref{thm:prime-high-dim} happen to satisfy the ``bounded slope criterion".
\begin{proof}[Proof of Corollary~\ref{cor:PL2}]
    Part 1) follows immediately from part 2) and Lemma~\ref{thm:Ruzsa-square}. So it suffices to establish 2).

    Recall the proof of Lemma~\ref{lem:Ruzsa-lift}: Let $q$ be any large prime. Let $N = \lfloor q / 3 \rfloor$ and $M = \lfloor \sqrt{q} / 2 \rfloor$. Let $A$ be the largest square--difference--free subset of $[\lfloor q / 10 \rfloor]$. We define the set 
    $$P = \{(x, y) \in [N] \times [M]: 2x - y^2 \in A\}$$
    and for each $p = (x,y) \in P$, define the associated line $\ell_p = \{(x, y) + t(y, 1): t\in \Z\}$ with slope $s_p = (y, 1)$. We showed that $\{(p,\ell_p):p\in P\}$ is an induced matching in the point--line incidence graph of $\F_q^2$. In particular, this implies that $p' \notin p + \R s_p$ for each distinct $p, p' \in P$. Hence, we can apply Theorem~\ref{thm:lattice-to-PL} with $L = M$, noting that $LM < N$ by definition. We conclude that there exists points $p_1, \cdots, p_n \in [0, 1]^2$ and lines $\ell_1, \cdots, \ell_p$ with $p_i \in \ell_j$ and for distinct $i \neq j$
    $$d_\infty(p_i, \ell_j) \leq \delta$$
    where $n = M |A|$ and $\delta = \frac{1}{2N}$.

    Thus assuming $\PL_2(0.5 + c)$ does hold, we must have
    $$M |A| \leq (2N)^{1.5 - c + o(1)}$$
    which given our choice of $M$ and $N$ gives
    $$|A| \leq N^{1 - c + o(1)}.$$
    Thus, any square--difference--free subset of $[\lfloor q / 10 \rfloor]$ has size at most $q^{1 - c + o(1)}$. Since this holds for any prime $q$, by Bertrand's postulate this also holds with $\lfloor q / 10 \rfloor$ replaced by any positive integer.
\end{proof}
\begin{proof}[Proof of Corollary~\ref{cor:PLd}]
    Given $\gamma >0$, let $d$ be the dimension given by Corollary~\ref{cor:lattice-nikodym} with $\varepsilon :=\gamma /3$ (thus in particular $d\le \exp(O(\gamma^{-1}))$). We will show that $\PL_{d+1}(\gamma)$ fails, meaning we can take $d_0(\gamma)=d+1$.

    By definition of $d$, given any $N$, we can find $P\subset [N]^{d}\times [N^{1-\varepsilon}]$ of size $|P|\gg_\varepsilon N^{d+1-2\varepsilon}$, and slopes $s_p\in [-N^{-\varepsilon},N^{\varepsilon}]^d\times \{1\}$ for $p\in P$ so that $\ell(p,s_p)^*\cap P=\emptyset$ (recall that this means that there is no $t\in \Z\setminus \{0\}$ so that $p+t\cdot s_p\in P$). In particular, we have that for distinct $p,p'\in P$, that $p'\not \in p+\Z s_p$.

    Thus, we can apply Theorem~\ref{thm:lattice-to-PL} to $P$ with parameters \[N:=N,M:=N^{1-\varepsilon},L:=N^{\varepsilon}.\] We conclude there exists a set of points $p_1,\dots,p_{|P|}\in [0,1]^{d+1}$ and lines $\ell_1,\dots,\ell_{|P|}$ with $p_i\in \ell_i$ such that for each $i\neq j$ we have\[d_\infty(p_i,\ell_j)\ge \frac{1}{2N}.\]
    Recalling $n:=|P|\gg_\varepsilon N^{d+1-2\varepsilon}$ and $2\varepsilon<\gamma$, it is impossible for the bound ``$n< (2N)^{d+1-\gamma+o(1)}$'' to hold, meaning $\PL_{d+1}(\gamma)$ is false, as desired.
\end{proof}

\section{Point--hyperplane matchings}\label{sec:higherdim}

The induced matching problem extends naturally beyond point-line incidences.  For example, for an integer $d\ge 2$, let
$\IMPH(d,q)$ denote the maximum size of an induced matching in the point--hyperplane incidence graph of $\F_q^d$: the left vertex set is $\F_q^d$
(points), the right vertex set is the set of affine hyperplanes in $\F_q^d$, and a point is adjacent
to a hyperplane if and only if it lies on it. 

The same eigenvalue method underlying \eqref{eq:vinh} gives a uniform upper bound for all $d$. 

\begin{proposition}[Point--hyperplane upper bound]\label{prop:point-hyperplane-upper}
  Let $q$ be a prime power and $d\ge 2$.  If $P=\{p_1,\dots,p_n\}\subset\F_q^d$ and
  $H=\{\pi_1,\dots,\pi_n\}$ is a family of affine hyperplanes in $\F_q^d$ such that
  \[ 
    p_i\in \pi_j\ \text{ holds if and only if }\ i=j, 
  \]
  then
  \[
    n\le q^{\frac{d+1}{2}}+q.
  \]
  Equivalently, $\IMPH(d,q)\le q^{\frac{d+1}{2}}+q$.
\end{proposition}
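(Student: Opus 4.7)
The plan is to reduce Proposition~\ref{prop:point-hyperplane-upper} to a Vinh-type asymmetric incidence estimate in $\F_q^d$ and then extract the upper bound on the matching size by a short algebraic manipulation. The key step is to establish the following analogue of Vinh's theorem for point--hyperplane incidences: for every $P\subset \F_q^d$ and every family $H$ of affine hyperplanes in $\F_q^d$,
\begin{equation}\label{eq:vinh-PH-plan}
\left| I(P,H) - \frac{|P|\,|H|}{q}\right| \;\le\; q^{(d-1)/2}\sqrt{|P|\,|H|},
\end{equation}
where $I(P,H)$ denotes the number of incident pairs $(p,\pi)$ with $p\in\pi$.

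To prove \eqref{eq:vinh-PH-plan}, I would view the point--hyperplane incidence graph as a biregular bipartite graph between $\F_q^d$ (of size $q^d$) and the set of affine hyperplanes (of size $q(q^d-1)/(q-1)$): each point lies on $(q^d-1)/(q-1)$ hyperplanes and each hyperplane contains $q^{d-1}$ points, so the global incidence density is $1/q$. Fixing a nontrivial additive character $\chi$ of $\F_q$, I would expand
\[
\mathbf{1}[a\cdot x = b] \;=\; \frac{1}{q}\sum_{t\in\F_q}\chi\bigl(t(a\cdot x - b)\bigr),
\]
sum over $p\in P$ and over representatives $(a,b)\in(\F_q^d\setminus\{0\})\times\F_q$ of hyperplanes in $H$, and separate the $t=0$ contribution (which gives the main term $|P||H|/q$) from the $t\neq 0$ contributions. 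Two applications of Cauchy--Schwarz combined with the orthogonality relations for additive characters on $\F_q^d$ then bound the remaining sum by $q^{(d-1)/2}\sqrt{|P||H|}$, yielding \eqref{eq:vinh-PH-plan}. Equivalently, this is an instance of the expander mixing lemma applied to the incidence bigraph, whose nontrivial singular values can be shown to be at most $q^{(d-1)/2}$. This is the natural extension of the spectral argument of Vinh~\cite{Vinh2011ST} from $d=2$ to general $d$, and the computation is standard.

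Once \eqref{eq:vinh-PH-plan} is in place, the proposition is immediate. Applying it to $P=\{p_1,\dots,p_n\}$ and $H=\{\pi_1,\dots,\pi_n\}$ with $|P|=|H|=n$, the induced-matching hypothesis gives $I(P,H)=n$ exactly (only the diagonal pairs are incident), so
\[
\left|n - \frac{n^2}{q}\right| \;\le\; q^{(d-1)/2}\,n.
\]
Dividing by $n$ gives $|1 - n/q| \le q^{(d-1)/2}$. If $n\le q$ there is nothing to prove, since $n\le q \le q^{(d+1)/2}+q$; otherwise $n/q - 1 \le q^{(d-1)/2}$, and rearranging yields $n \le q^{(d+1)/2} + q$. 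The only substantive step is the character-sum/spectral derivation of \eqref{eq:vinh-PH-plan}, which is where the main technical work lies; the remainder of the proof is a one-line case distinction.
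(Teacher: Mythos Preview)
Your proposal is correct and follows essentially the same approach as the paper: apply Vinh's point--hyperplane incidence bound $|I(P,H)-|P||H|/q|\le q^{(d-1)/2}\sqrt{|P||H|}$ with $|P|=|H|=n$ and $I(P,H)=n$, then rearrange. The only difference is cosmetic: the paper simply cites Vinh~\cite{Vinh2011ST}, who already proved the incidence estimate for general $d$ (not just $d=2$), whereas you sketch its derivation via characters/expander mixing; the downstream algebra is identical.
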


\begin{proof}
  Vinh \cite{Vinh2011ST} proved more generally that for any sets of points $P\subset\F_q^d$ and affine hyperplanes $H$ in $\F_q^d$,
  the number of incidences satisfies
  \[
    \Bigl|I(P,H)-\frac{|P||H|}{q}\Bigr|\le q^{\frac{d-1}{2}}\sqrt{|P||H|},
  \]
  see \cite{Vinh2011ST}.  In our situation $|P|=|H|=n$ and $I(P,H)=n$, so
  \[
    \bigl|n-n^2/q\bigr|\le q^{\frac{d-1}{2}}n.
  \]
  If $n\le q$ there is nothing to prove.  Otherwise $n^2/q-n\ge 0$, and the last inequality gives
  \[
    \frac{n^2}{q}-n\le q^{\frac{d-1}{2}}n,\qquad\text{i.e. }\qquad
    n\le q\bigl(q^{\frac{d-1}{2}}+1\bigr)=q^{\frac{d+1}{2}}+q.
  \]
\end{proof}

For $d=3$, Proposition~\ref{prop:point-hyperplane-upper} asserts $\IMPH(3,q)\le q^2+q$.
Perhaps surprisingly, in contrast with the planar case, we note that the correct scale in three dimensions is already achieved by a classical quadratic construction, for every prime or prime power $q$. 

Assume $\operatorname{char}(\F_q)\ne 2$ and fix $a\in\F_q^\times$ such that $-a$ is a non-square.
Consider the (elliptic) paraboloid
\[
  P\ :=\ \{(x,y,z)\in\F_q^3:\ x=y^2+a z^2\}\ =\ \{(y^2+a z^2,\,y,\,z):\ y,z\in\F_q\}.
\]
For a point $p=(x_0,y_0,z_0)\in P$ define the affine plane
\[
  \pi_p\ :=\ \Bigl\{(x,y,z)\in\F_q^3:\ x-2y_0y-2a z_0z= -y_0^2-a z_0^2\Bigr\}.
\]
The next proposition will show that $\pi_p$ is the tangent plane to $P$ at $p$, in the sense that $\pi_p \cap P = \left\{p\right\}$ for every $p \in P$. Since $|P|=q^2$, this example will thereby provide a lower bound construction for $\IMPH(3,q)$.

\begin{proposition}\label{prop:d3-construction}
  With notation as above, the family of point--plane pairs
  \[
    \{(p,\pi_p):\ p\in P\}
  \]
  is an induced matching in the point--plane incidence graph of $\F_q^3$.
  In particular, 
  $$\IMPH(3,q)\ge q^2.$$
\end{proposition}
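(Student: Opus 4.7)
The plan is to verify the two defining properties of an induced matching by direct substitution, with the only nontrivial input being that $-a$ is a non-square in $\F_q$. First I would check that $p \in \pi_p$ for every $p = (y_0^2 + a z_0^2, y_0, z_0) \in P$: plugging $(x,y,z) = p$ into the defining equation $x - 2 y_0 y - 2 a z_0 z = -y_0^2 - a z_0^2$ of $\pi_p$ reduces to the identity $(y_0^2 + a z_0^2) - 2 y_0^2 - 2 a z_0^2 = -y_0^2 - a z_0^2$, which is immediate.

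The main step is the converse: if $p' = (y_1^2 + a z_1^2, y_1, z_1) \in P$ lies on $\pi_p$, then $p' = p$. Substituting the coordinates of $p'$ into the defining equation of $\pi_p$ and completing the square yields
\[
(y_1 - y_0)^2 + a (z_1 - z_0)^2 \;=\; 0,
\]
equivalently $(y_1 - y_0)^2 = -a (z_1 - z_0)^2$. Here the hypothesis that $-a$ is not a square in $\F_q$ comes in decisively: if $z_1 \ne z_0$, then $-a = \bigl((y_1-y_0)/(z_1-z_0)\bigr)^2$ would be a square, a contradiction. Hence $z_1 = z_0$, and then $(y_1 - y_0)^2 = 0$ forces $y_1 = y_0$, so $p' = p$.

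Since this argument is symmetric in $p$ and $p'$, both $p' \notin \pi_p$ and $p \notin \pi_{p'}$ hold whenever $p, p'$ are distinct points of $P$, which is precisely the induced-matching condition. In particular, the map $p \mapsto \pi_p$ is injective, and the parametrization $(y,z) \mapsto (y^2 + a z^2, y, z)$ gives $|P| = q^2$, yielding the lower bound $\IMPH(3,q) \ge q^2$. I do not anticipate any real obstacle here: the construction is the finite-field analogue of the classical tangent plane to an elliptic paraboloid, and the non-square hypothesis on $-a$ is exactly what is needed to make the quadratic form $y^2 + a z^2$ anisotropic, so that $(y_1 - y_0)^2 + a(z_1 - z_0)^2 = 0$ forces $y_1 = y_0$ and $z_1 = z_0$.
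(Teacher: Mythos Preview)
Your proposal is correct and follows essentially the same approach as the paper: verify $p\in\pi_p$ by direct substitution, then show that any $p'\in P\cap\pi_p$ satisfies $(y_1-y_0)^2+a(z_1-z_0)^2=0$, and use the non-square hypothesis on $-a$ to force $p'=p$. The paper's proof is identical in structure and detail.
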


\begin{proof}
  Fix $p=(x_0,y_0,z_0)\in P$.  Since $x_0=y_0^2+a z_0^2$, we have
  \[
    x_0-2y_0y_0-2a z_0z_0 = -y_0^2-a z_0^2
  \]
  and hence $p\in\pi_p$.

  Now let $p'=(x',y',z')\in P\cap\pi_p$.  Substituting $x'=y'^2+a z'^2$ into the plane equation gives
  \[
    y'^2+a z'^2-2y_0y'-2a z_0z'\ =\ -y_0^2-a z_0^2,
  \]
  or equivalently
  \[
    (y'-y_0)^2+a(z'-z_0)^2\ =\ 0.
  \]
  Since $-a$ is a non-square, the only solution is $y'=y_0$ and $z'=z_0$.  It follows that $p'=p$, so
  $\pi_p\cap P=\{p\}$.

  Therefore, no point in $P$ lies on the plane corresponding to a different point, and the set of
  pairs $\{(p,\pi_p):p\in P\}$ is an induced matching.
\end{proof}

\begin{remark}
  The construction in Proposition~\ref{prop:d3-construction} is an affine model of the classical
  Brown construction/elliptic quadric (ovoid) in $\mathrm{PG}(3,q)$; see, for instance, \cite[Chapter 4.3]{Guth}.
  Combined with Proposition~\ref{prop:point-hyperplane-upper}, it yields
  \[
    \IMPH(3,q)=q^2+O(q).
  \]

It is tempting to try to imitate the construction from Proposition~\ref{prop:d3-construction} in higher dimensions by hyperplanes which are tangent to a quadratic hypersurface.  The preceding proof, however, is quite specific to dimension $d=3$: the ``error term'' is controlled by a binary quadratic form $u^2+a v^2$ with no non--trivial zeros, which is only available in two variables. Determining the correct order of magnitude of $\IMPH(d,q)$ for $d\ge 4$ seems to be an interesting
finite-geometric question, related to the existence of large minimal blocking sets in higher dimensions \cite{MPS07}.
\end{remark}

\section{Concluding remarks}\label{sec:conclusion}

This paper develops several new lower bounds for induced matchings in point--line incidence graphs over finite fields.
We close by highlighting a few directions where we think the induced-matching viewpoint can keep paying dividends.

\subsection*{Simple things we don't know}

On the upper-bound side our understanding is still rather limited.
For $d=2$, Vinh's finite-field Szemer\'edi--Trotter theorem \cite{Vinh2011ST} implies that for every prime power $q$,
\begin{equation}\label{eq:vinh-concl}
  \IM(2,q)\ \le\ q^{3/2}+q.
\end{equation}
This $q^{3/2}$ exponent is sharp up to constants when $q$ is a square, thanks to the Hermitian unital.
Beyond this, however, essentially no nontrivial upper bounds are known for point--line induced matchings in $\F_q^d$ once $d\ge 3$.
In contrast to the point--hyperplane setting (where one still has strong spectral/incidence tools),
we do not currently know any interesting analogue of \eqref{eq:vinh-concl} for points and lines in higher dimensions. Even a qualitative asymptotic improvement over the trivial bound $\IM(d,q)\le q^d$ would be a major advance. 

\begin{conj}\label{conj:highd-oqd}
For every fixed $d\ge 3$,
\[
  \IM(d,q)\ =\ o(q^d)\qquad \text{as }q\to\infty.
\]
\end{conj}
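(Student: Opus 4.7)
Since this conjecture is equivalent (via Proposition~\ref{prop:nikodym-basic}(2)) to the longstanding open problem $\Nik(d,q) = q^d - o(q^d)$, any honest plan amounts to an attack on the latter; let me describe the angle I would pursue. Passing to the contrapositive, fix $\delta > 0$ and $P \subset \F_q^d$ with $|P| \geq \delta q^d$ together with lines $\ell_p$ for $p \in P$ satisfying $\ell_p \cap P = \{p\}$; the goal is to derive a contradiction for $q$ large in terms of $d$ and $\delta$.

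The natural starting point is the polynomial method. Let $f$ be a nonzero polynomial of minimum degree vanishing on $\F_q^d \setminus P$; a standard dimension count gives $\deg f = O_d(|\F_q^d \setminus P|^{1/d})$. On each $\ell_p$, the restriction $f|_{\ell_p}$ has at least $q-1$ zeros, so once $\deg f < q-1$ it must vanish identically, forcing $f(p) = 0$ for every $p \in P$ and hence $f \equiv 0$. This contradiction only goes through when $|P| \geq (1-c_d)q^d$ for some fixed $c_d > 0$, so the bare polynomial method recovers only Dvir's $\Omega_d(q^d)$ lower bound for Nikodym sets. To push further, I would apply the method of multiplicities of Dvir--Kopparty--Saraf--Sudan, demanding that $f$ vanish to order $s$ on $\F_q^d \setminus P$ and propagating this vanishing along each $\ell_p$; formally this lets the ratio $\deg f / q$ approach $1$ as $s \to \infty$. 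In bounded characteristic, the refinement of Guo--Kopparty--Sudan~\cite{guo2013affine} already yields $\Nik(d,q) \geq q^d - O(q^{(1-\epsilon)d})$, so a reasonable plan is to remove the characteristic dependence, perhaps by combining multiplicities with a Frobenius/$p$-adic lifting trick or with the norm-hypersurface structure developed in Section~\ref{sec:matching-highpower-II}.

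The main obstacle in every variant I can see is bridging the gap between the constant $c_d < 1$ delivered by polynomial-method arguments and the $1-o(1)$ demanded by the conjecture; that gap is essentially the entire content of the open problem. I expect that closing it requires genuinely new structural input, for instance a density-increment or structure-versus-randomness dichotomy on the direction map $p \mapsto v_p$, handled in the structured case by a finite-field Wolff-type hairbrush estimate on dense line families, and in the pseudorandom case by a sharper algebro-geometric bound on the point--line incidence variety. Absent such an ingredient, I do not see how the techniques of the present paper, by themselves, close the conjecture.
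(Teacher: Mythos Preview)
Your assessment is correct, and in fact matches the paper's own stance exactly: this statement is a \emph{conjecture}, not a theorem, and the paper does not prove it. It appears in the concluding remarks precisely as an open problem, prefaced by the remark that ``even a qualitative asymptotic improvement over the trivial bound $\IM(d,q)\le q^d$ would be a major advance.'' So there is no paper proof to compare your proposal against.

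Your discussion is an honest and accurate summary of why the problem is hard. The equivalence with $\Nik(d,q)=q^d-o(q^d)$ via Proposition~\ref{prop:nikodym-basic}(2) is exactly what the paper records, and your identification of the obstacle---that Dvir-type polynomial arguments and their multiplicity refinements only reach $|P|\ge c_d q^d$ rather than $(1-o(1))q^d$---is the standard diagnosis. The bounded-characteristic result of Guo--Kopparty--Sudan you cite is also mentioned by the paper as the one regime where more is known. Your closing sentence, that the techniques of this paper do not close the conjecture, is precisely right: the paper's contributions are all on the \emph{lower-bound} side for $\IM(d,q)$ (equivalently, \emph{upper} bounds for $\Nik(d,q)$), and if anything Theorem~\ref{thm:prime-high-dim} shows that no dimension-independent power saving is possible, which is evidence for the delicacy of the conjecture rather than a route to proving it.
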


In the planar case, the sharp examples for \eqref{eq:vinh-concl} are fundamentally extension-field phenomena.
Naturally, this motivates the following strengthening for induced matchings over a prime field. 

\begin{conj}\label{conj:prime-saving}
There exists an absolute constant $c>0$ such that for all sufficiently large \emph{primes} $q$,
\[
  \IM(2,q)\ \le\ q^{3/2-c}.
\]
\end{conj}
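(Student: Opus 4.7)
The plan is to beat Vinh's spectral incidence bound specifically for prime fields, exploiting the absence of a proper subfield $\F_{q_0}\subsetneq \F_q$ via improved point--line incidence estimates over $\F_p$, in the spirit of Stevens--de Zeeuw, Bourgain--Katz--Tao, and Rudnev's point--plane theorem. I would suppose, for contradiction, that an induced matching $(p_i,\ell_i)_{i=1}^n$ of size $n \geq q^{3/2-c}$ exists for arbitrarily small $c>0$, and attempt to extract enough arithmetic structure to contradict a prime-field sum--product estimate. Pigeonholing on the slopes of the $\ell_i$ would first produce a pencil of $\gg n/q$ parallel lines, each carrying exactly one matched point; iterating with a second popular slope yields a grid-like intersection pattern of matched points which one can hope to analyze with prime-field incidence tools.

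The key idea would be to \emph{augment} the bare matching configuration with auxiliary geometric data---such as differences of matched pairs, cross-ratios of quadruples, or the ``forbidden lines'' $\overline{p_ip_j}$ with $i\neq j$ (which by the matching condition lie outside the set $\{\ell_k\}$)---so that the resulting enlarged configuration carries enough total incidences for Stevens--de Zeeuw or Rudnev--Shkredov to deliver a power saving. A complementary route goes through the Furstenberg--S\'{a}rk\"{o}zy connection of Section~\ref{sec:prime-2d}: Proposition~\ref{lem:Ruzsa-lift} shows that a square-difference-free subset of $[N]$ of size $N^{1-\eta}$ lifts to a matching of size $\asymp q^{3/2-\eta/2}$, and a converse \emph{descent}---producing a dense square-difference-free (or ``algebraic-difference-free'') subset of $[N]$ from a matching of size $q^{3/2-c}$---would reduce Conjecture~\ref{conj:prime-saving} to a polynomial power-saving on the Furstenberg--S\'{a}rk\"{o}zy problem.

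The main obstacle is that an induced matching produces only $n$ incidences, far below the $n^{4/3}$ of a generic Szemer\'{e}di--Trotter extremizer, so direct application of incidence theorems is trivially satisfied. The hard part is extracting structural consequences from the \emph{absence} of extra incidences (i.e.\ from the constraint $p_j\notin \ell_i$ whenever $i\neq j$), and this sort of ``dual'' incidence refinement---controlling configurations by how few incidences they have rather than how many---is largely uncharted. Any successful proof would also have to explain why the lower bound in Theorem~\ref{thm:d=2} cannot itself be substantially improved, thereby tying Conjecture~\ref{conj:prime-saving} directly to sum--product rigidity over $\F_p$ and to Furstenberg--S\'{a}rk\"{o}zy. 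A polynomial improvement on the latter remains well beyond current methods (the Green--Sawhney bound only gives a logarithmic saving), and this is in my view the most serious barrier to any approach along these lines.
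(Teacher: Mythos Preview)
This statement is a \emph{conjecture}, not a theorem: the paper does not prove it, and in fact presents it in Section~\ref{sec:conclusion} as an open problem whose resolution would have striking consequences (a power saving for Paley clique numbers and for the Furstenberg--S\'ark\"ozy problem). So there is no ``paper's own proof'' to compare against.

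Your proposal is not a proof either, and you are candid about this. What you have written is a survey of plausible attack routes together with an honest assessment of why each one falls short. The two main lines you sketch---(i) leveraging prime-field incidence bounds \`a la Stevens--de Zeeuw or Rudnev, and (ii) a hypothetical descent from large matchings to dense square-difference-free sets---are both reasonable things to try, but neither is carried through. On route (i), you correctly identify the fundamental obstruction: an induced matching has only $n$ incidences, so standard incidence theorems are trivially satisfied, and the information content lies in the \emph{non}-incidences $p_j\notin\ell_i$, which current technology does not know how to exploit. On route (ii), you observe that Proposition~\ref{lem:Ruzsa-lift} goes in the wrong direction (it lifts Ruzsa sets to matchings, not the reverse), and that even if a descent existed, the target---a polynomial Furstenberg--S\'ark\"ozy bound---is itself a famous open problem.

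In short: the paper leaves this open, your discussion correctly explains why it is hard, and nothing here constitutes a proof. If you intended this as a research-plan sketch rather than a proof, that framing is appropriate; if you intended it as a proof, the gap is total.
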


Morally, Conjecture~\ref{conj:prime-saving} asserts that one cannot realize a ``unitary-like'' configuration over a prime field which would support induced matchings on the $q^{3/2}$ scale.  Two rather remarkable consequences of such a prime-field saving are worth recording.

\subsection*{Cliques in Paley graphs}

Assume $q\equiv 1\pmod 4$ is prime and let $G_q$ be the Paley graph on $\F_q$.
As observed in Proposition~\ref{prop:paley-to-im}, Paley cliques lift to induced matchings:
\begin{equation}\label{eq:paley-im-ineq-concl}
  q\,\omega(G_q)\ \le\ \IM(2,q).
\end{equation}
Consequently, Conjecture~\ref{conj:prime-saving} would immediately yield a matching power saving for Paley clique numbers,
with the \emph{same} exponent:
\[
  \IM(2,q)\ \ll\ q^{3/2-c}
  \qquad\Longrightarrow\qquad
  \omega(G_q)\ \ll\ q^{1/2-c}.
\]
Breaking the square-root barrier for Paley graphs of prime order by a polynomial factor is a classical open problem
at the intersection of additive combinatorics, pseudorandomness, and analytic number theory; see, e.g.,
\cite{Yip2022PrimePowerPaley,kunisky2023} and the references therein.
The strongest unconditional improvement in the prime case remains only a constant-factor gain due to Hanson--Petridis \cite{HansonPetridis2021}.

\subsection*{The Furstenberg--S\'ark\"ozy problem}

Our Ruzsa set lifting (Proposition~\ref{lem:Ruzsa-lift} and its prime-power variants) shows that induced matchings
interact naturally with the Furstenberg--S\'ark\"ozy problem.
Let
\[
  s(N)\ :=\ \max\Bigl\{|A|:\ A\subset [N],\ (A-A)\cap\{m^2:\ m\in\Z\setminus\{0\}\}=\emptyset\Bigr\}
\]
denote the largest size of a subset of $[N]=\{1,\dots,N\}$ with no \emph{nonzero} square difference.
The Furstenberg--S\'ark\"ozy theorem asks about the asymptotic behavior of $s(N)$, as a function of $N$.
The current record for the upper bound is due to Green--Sawhney \cite{GreenSawhney}:
\begin{equation}\label{eq:GS-record-concl}
  s(N)\ \ll\ N\exp\!\bigl(-c\sqrt{\log N}\bigr).
\end{equation}
A well-known folklore goal is to strengthen this to a \emph{power saving}:
\begin{equation}\label{eq:FS-folklore-concl}
  s(N)\ \ll\ N^{1-c}
\end{equation}
for some absolute $c>0$.

Conjecture~\ref{conj:prime-saving} would imply such a power saving (and with the same exponent).
Indeed, Proposition~\ref{lem:Ruzsa-lift} shows that if $A\subset [\lfloor q/10\rfloor]$ is square-difference-free, then
\[
  \IM(2,q)\ \gg\ q^{1/2}|A|.
\]
Therefore, if $\IM(2,q)\ll q^{3/2-c}$ holds for primes $q$, we obtain
\[
  |A|q^{1/2}\ \ll\ \IM(2,q)\ \ll\ q^{3/2-c}
  \qquad\Longrightarrow\qquad
  |A|\ \ll\ q^{1-c}\ \asymp\ N^{1-c},
\]
which is exactly \eqref{eq:FS-folklore-concl}.
Thus Conjecture~\ref{conj:prime-saving} would provide a rather unexpected route to the long-sought power-saving regime
for Furstenberg--S\'ark\"ozy.

\subsection*{Euclidean point--line separation and Heilbronn}

Induced matchings are also a finite-field model for Euclidean point--line separation questions.
Given $n$ pairs $\{p_i\in \ell_i\}_{i=1}^n$ in the unit square, define
\[
  \delta := \min_{i\neq j} d(p_i,\ell_j),
\]
where $d(\cdot,\cdot)$ denotes Euclidean distance.
Cohen--Pohoata--Zakharov \cite{CPZ} proved that one always has $\delta\lesssim n^{-2/3+o(1)}$,
and this implies the upper bound $\Delta(n)\lesssim n^{-7/6+o(1)}$ for Heilbronn's triangle problem.

A natural open problem is to improve the $2/3$ exponent in the minimal-distance problem to $2/3+c$.
Such an improvement would generate a further new record of $\Delta(n) \lesssim n^{-7/6-c+o(1)}$ for the Heilbronn triangle problem. In Corollary \ref{cor:PL2}, we also observed that improvements for the minimal distance problem have nontrivial implications for square-difference-free sets (via a geometric encoding of the type used in Proposition~\ref{lem:Ruzsa-lift}). We view this as further evidence that induced matchings provide a useful organizing principle for the intersection of finite-field incidence combinatorics, Euclidean separation, and additive number theory.

\subsection*{Ramsey theory and forbidden configurations}

Finite-geometric incidence structures have long supplied extremal and Ramsey constructions via their incidence graphs. A striking recent example is the work of Mattheus--Verstra\"ete \cite{MV} on $R(4,t)$:
they exploit the Hermitian unital and its ``no O'Nan configuration'' property to build very dense graphs
avoiding the one-subdivision of $K_4$, leading to a nearly sharp lower bound for $R(4,t)$.

Our norm-hypersurface construction from Theorem~\ref{thm:small-power} can be viewed as higher-degree analogues of unitary geometry: we obtain large point sets $P$ lying on an algebraic hypersurface together with a distinguished line through each $p\in P$ meeting $P$ only at $p$.  It would be very interesting to understand whether these higher-$k$ objects also satisfy local forbidden-configuration phenomena in the spirit of the O'Nan property, and whether such structure could be leveraged to build new extremal and Ramsey constructions.

\medskip
\noindent {\bf{Acknowledgments.}} ZH was supported by SNSF grant 200021-228014. CP was supported by NSF grant DMS-2246659. JV was supported by the NSF FRG Award DMS-1952786 and NSF Award DMS-2347832. The authors would like to thank Vaughan McDonald for valuable discussions about the manuscript.

\appendix
\section{Polynomial identities via Lagrange interpolation}\label{app:binomial-alternating-sum}

In this appendix, we record the proof of Proposition \ref{prop:shifted-kth-power-sum} that we used in Section \ref{sec:Ruzsa-prime-powers} in the proof of Theorem \ref{thm:prime-power-high-dim}. 

\begin{proposition}\label{prop:shifted-kth-power-sum}
Let $k\ge 1$ and $f(x)\in \mathbb{Z}[x]$. Then
\begin{equation}\label{eq:main-identity}
\sum_{i=0}^{k-1}(-1)^i\binom{k-1}{i}\,\bigl(f(x)-i\bigr)^k
= k!\,f(x)-\frac{k!(k-1)}{2}.
\end{equation}
Equivalently,
\[
\sum_{i = 0}^{k - 1} (-1)^i \binom{k - 1}{i} \left(2f + k-1 - 2i\right)^k = k! \cdot 2^{k} \cdot f.
\]
\end{proposition}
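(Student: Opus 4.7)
The plan is to reduce the identity to a standard finite-difference calculation. Treating $f=f(x)$ as a variable, I would expand each summand by the binomial theorem and swap the order of summation:
\[
\sum_{i=0}^{k-1}(-1)^i\binom{k-1}{i}(f-i)^k
= \sum_{j=0}^{k}\binom{k}{j}(-1)^j f^{k-j}\,S_j,
\qquad S_j:=\sum_{i=0}^{k-1}(-1)^i\binom{k-1}{i}i^j.
\]
The inner sum $S_j$ is, up to a sign, the $(k-1)$-th forward difference at $0$ of the monomial $y^j$: explicitly $S_j=(-1)^{k-1}\Delta^{k-1}y^j\big|_{y=0}$, where $\Delta h(y):=h(y+1)-h(y)$. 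The classical fact that $\Delta^{k-1}$ annihilates polynomials of degree $<k-1$, together with $\Delta^{k-1}y^{k-1}=(k-1)!$, immediately gives $S_j=0$ for $j<k-1$ and $S_{k-1}=(-1)^{k-1}(k-1)!$.

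The only genuine computation left is $S_k=(-1)^{k-1}\Delta^{k-1}y^k\big|_{y=0}$. I would obtain this from the Stirling-of-the-second-kind expansion $y^k=\sum_j S(k,j)[y]_j$ in the falling factorial basis $[y]_j:=y(y-1)\cdots(y-j+1)$, combined with the clean rule $\Delta[y]_j=j[y]_{j-1}$. Iterating $\Delta$ a total of $k-1$ times kills all terms with $j<k-1$, leaving only $j=k-1$ and $j=k$; using $S(k,k)=1$ and $S(k,k-1)=\binom{k}{2}$ yields
\[
\Delta^{k-1}y^k \;=\; k!\,y + \binom{k}{2}(k-1)! \;=\; k!\,y+\tfrac{k!(k-1)}{2},
\]
so $S_k=(-1)^{k-1}\tfrac{k!(k-1)}{2}$. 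A quick sanity check in the case $k=3$ (where $\Delta^2 y^3=(y+2)^3-2(y+1)^3+y^3=6y+6$) rules out sign errors.

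Plugging $S_{k-1}$ and $S_k$ back into the binomial expansion, only the $j=k-1$ and $j=k$ terms contribute, and the signs collapse to give
\[
k(-1)^{k-1}f\cdot S_{k-1} + (-1)^k S_k \;=\; k!\,f - \tfrac{k!(k-1)}{2},
\]
which is exactly \eqref{eq:main-identity}. The equivalent reformulation then follows by substituting $f\mapsto f+\tfrac{k-1}{2}$ in the proved identity (valid as both sides are polynomial identities in $f$ over $\mathbb{Q}[x]$), observing that the additive constant $\tfrac{k!(k-1)}{2}$ cancels on the right, and then multiplying through by $2^k$ to clear the half-integer shift inside the $k$-th power. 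The plan is entirely routine; the only place to be careful is sign bookkeeping, and the sole nontrivial input is the closed form for $\Delta^{k-1}y^k$, which is a classical Stirling-number computation.
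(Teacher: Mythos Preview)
Your argument is correct. You expand $(f-i)^k$ by the binomial theorem, swap summations, and reduce everything to the forward-difference identities $\Delta^{k-1}y^j\big|_{y=0}$ for $j\le k$, which you evaluate via the falling-factorial/Stirling expansion; the signs and the final substitution $f\mapsto f+\tfrac{k-1}{2}$ all check out.

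This is a genuinely different route from the paper's proof. The paper does not expand in powers of $f$; instead it introduces the auxiliary polynomial $P(t)=(f-t)^k-(c-t)^k$ with $c=\tfrac{k-1}{2}$, observes that $\deg_t P\le k-1$ with leading coefficient $(-1)^{k-1}k(f-c)$, and applies Boole's formula (a Lagrange-interpolation identity) to $P$ at the nodes $t=0,1,\dots,k-1$. This yields the desired identity up to a correction term $T=\sum_i(-1)^i\binom{k-1}{i}(c-i)^k$, which the paper then shows vanishes by pairing $i\leftrightarrow k-1-i$. Your approach trades that two-step structure (interpolation lemma plus symmetry cancellation) for a single direct computation of $\Delta^{k-1}y^k$ via $S(k,k)=1$ and $S(k,k-1)=\binom{k}{2}$. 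Your method is more self-contained and computational, avoiding the need for the separate symmetry argument; the paper's method is slightly more conceptual but leans on an external lemma and an additional cancellation step. Both are short and standard.
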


\begin{proof}
We use the following fairly well-known polynomial identity.

\begin{lemma}\label{lem:boole}
Let $n\ge 0$, and let
\[
p(t)=a_0 t^{n}+a_1 t^{n-1}+\cdots + a_{n-1}t + a_n
\]
be a polynomial with coefficients in $\mathbb{Q}[x]$. Then, for all $a,b$ with $b\neq 0$, we have that
\begin{equation}\label{eq:pohoata}
\sum_{j=0}^{n} (-1)^{\,n-j}\binom{n}{j}\,p(a+j b)=a_0\,b^{n}\,n!.
\end{equation}
\end{lemma}

Lemma \ref{lem:boole} is typically attributed to Boole and can be proved, for example, using Lagrange interpolation formula. See \cite{Pohoata2008} and the references therein. To derive Proposition \ref{prop:shifted-kth-power-sum} from Lemma \ref{lem:boole}, we will work in the coefficient ring $\mathbb{Q}[x]$, and set $c:=\dfrac{k-1}{2}$.
Consider the polynomial in $t$,
\[
P(t):=\bigl(f(x)-t\bigr)^k-\bigl(c-t\bigr)^k \in \mathbb{Q}[x][t].
\]
As a polynomial in $t$, both $(f(x)-t)^k$ and $(c-t)^k$ have leading term $(-t)^k$,
so the $t^k$ terms cancel and hence $\deg_t P\le k-1$.

We now compute the coefficient of $t^{k-1}$ in $P(t)$. Using the binomial theorem,
\[
(f(x)-t)^k=\sum_{r=0}^{k}\binom{k}{r} f(x)^{k-r}(-t)^r,
\]
so the $t^{k-1}$-term is $\binom{k}{k-1}f(x)(-t)^{k-1}=k f(x)(-1)^{k-1}t^{k-1}$.
Similarly, the $t^{k-1}$-term of $(c-t)^k$ is $k c(-1)^{k-1}t^{k-1}$.
Therefore the leading coefficient of $P(t)$ (as a degree $\le k-1$ polynomial in $t$) is
\[
a_0=(-1)^{k-1}k\bigl(f(x)-c\bigr).
\]

Apply Lemma~\ref{lem:boole} to $P(t)$ with $n=k-1$, $a=0$, $b=1$ to get:
\[
\sum_{i=0}^{k-1}(-1)^{\,k-1-i}\binom{k-1}{i}\,P(i)
=a_0\cdot 1^{k-1}\cdot (k-1)!
=(-1)^{k-1}k\bigl(f(x)-c\bigr)(k-1)!.
\]
Multiplying both sides by $(-1)^{k-1}$ yields
\begin{equation}\label{eq:after-pohoata}
\sum_{i=0}^{k-1}(-1)^{i}\binom{k-1}{i}\,P(i)
= k!\bigl(f(x)-c\bigr).
\end{equation}
Expanding $P(i)=(f(x)-i)^k-(c-i)^k$ in~\eqref{eq:after-pohoata} gives
\begin{equation}\label{eq:split}
\sum_{i=0}^{k-1}(-1)^{i}\binom{k-1}{i}\,(f(x)-i)^k
\;-\;
\sum_{i=0}^{k-1}(-1)^{i}\binom{k-1}{i}\,(c-i)^k
= k!\bigl(f(x)-c\bigr).
\end{equation}

It remains to show that the second sum in~\eqref{eq:split} is $0$. Let
\[
T:=\sum_{i=0}^{k-1}(-1)^{i}\binom{k-1}{i}\,(c-i)^k.
\]
Pair the summand with index $i$ with the summand at index $k-1-i$. Since
$\binom{k-1}{k-1-i}=\binom{k-1}{i}$ and $
c-(k-1-i)=\frac{k-1}{2}-(k-1-i)=-(c-i)$, we obtain
\[
(c-(k-1-i))^k = \bigl(-(c-i)\bigr)^k = (-1)^k(c-i)^k.
\]
Moreover, $(-1)^{k-1-i}=(-1)^{k-1}(-1)^i$. It follows that the paired summand equals
\[
(-1)^{k-1-i}\binom{k-1}{k-1-i}(c-(k-1-i))^k
=
-\,(-1)^i\binom{k-1}{i}(c-i)^k,
\]
Thus each pair cancels, and if $k$ is odd the
unique fixed point $i=c=(k-1)/2$ contributes $(c-c)^k=0$. Therefore $T=0$.

With $T=0$, equation~\eqref{eq:split} becomes
\[
\sum_{i=0}^{k-1}(-1)^{i}\binom{k-1}{i}\,(f(x)-i)^k
= k!\bigl(f(x)-c\bigr)
= k!\,f(x)-k!\,\frac{k-1}{2},
\]
which is exactly~\eqref{eq:main-identity}.
\end{proof}

\end{document}